\def\marginpar#1{\ignorespaces}
\def \E{\mathbb{E}}
\def \N{\mathbb{N}}
\def \P{\mathbb{P}}
\def \R{\mathbb{R}}
\newcommand{\leqp}{\preceq_+} 
\newcommand{\leqc}{\preceq_{C}}
\newcommand{\leqe}{\preceq_{E}}
\newcommand{\leqs}{\preceq_{\text{\rm sto}}}
\newcommand{\leqmaj}{\preceq}
\DeclareMathOperator\argmin{\arg \min}
\DeclareMathOperator\dom{dom}
\DeclareMathOperator\idom{idom}
\DeclareMathOperator\bdom{bdom}
\newtheorem{theorem}{Theorem}[section]
\newtheorem{lemma}[theorem]{Lemma}
\newtheorem{proposition}[theorem]{Proposition}
\newtheorem{corollary}[theorem]{Corollary}
\newtheorem{definition}[theorem]{Definition}
\newtheorem{remark}[theorem]{Remark}
\newtheorem{example}[theorem]{Example}
\numberwithin{equation}{section}
\begin{document}
\title[]{The football model, stochastic ordering and martingale transport}

\author[Gaoyue Guo]{{Gaoyue} Guo}
\address{Université Paris-Saclay CentraleSupélec, Laboratoire MICS and CNRS FR-3487. 
} \email{gaoyue.guo@centralesupelec.fr}
\author[Nicolas Juillet]{{Nicolas} Juillet}
\address{(1) Universit\'e de Haute Alsace, IRIMAS (Institut de Recherche en Informatique, Math\'ematiques, Automatique et Signal) UR 7499, F-68\,100 Mulhouse, France (2) Université de Strasbourg.}\email{nicolas.juillet@uha.fr}
\author[Wenpin Tang]{{Wenpin} Tang}
\address{Department of Industrial Engineering and Operations Research, Columbia University, New York, USA.
} \email{wt2319@columbia.edu}

\date{\today} 
\begin{abstract}
Tournaments are competitions between a number of teams, 
the outcome of which determines the relative strength or rank of each team.
In many cases, the strength of a team in the tournament is given by a score. 
Perhaps the most fundamental mathematical result in the theory of random tournaments is Moon's theorem, which provides a necessary and sufficient condition for a feasible score sequence via majorization. 
To give a probabilistic interpretation of Moon's result,
Aldous and Kolesnik introduced the football model,
the existence of which gives a short proof of Moon's theorem.
However, the proof of Aldous and Kolesnik is ``noncanonical'',
leading to the question of a canonical construction of the football model. 
The purpose of this paper is to provide explicit constructions of the football model
with an additional stochastic ordering constraint,
which can be formulated by martingale transport.
Two solutions are given: one is by solving an entropy optimization problem via Sinkhorn's algorithm,
and the other relies on the idea of shadow couplings. 
It turns out that both constructions yield the property of strong stochastic transitivity.
Nontransitive version of the football model is also considered.
\end{abstract}

\maketitle

\textit{Key words :} Entropy optimization, martingale transport, pairwise comparison, score sequence, Sinkhorn's algorithm, shadow coupling, football model, stochastic ordering, strong stochastic transitivity, tournament.

\textit{AMS 2010 Mathematics Subject Classification: }60E15, 62F07, 60G42, 49Q22.

%!!! 60E15 : Inequalities; stochastic orderings
%!! 62F07 : Statistical ranking and selection procedures
%!!! 60G42 : Martingales with discrete parameter
%!! 49Q22 : Optimal transportation
%60C05 : Combinatorial probability
%52A41 : Convex functions and convex programs in convex geometry

\section{Introduction}
\label{sc1}

A tournament refers to pairwise competitions between a number of teams (or players),
all participating in a sport or game, 
in order to determine the winner or to produce a ranking of the teams.
There has been a long history of research in tournaments and their rankings,
including psychology \cite{Thur27, Thur31},
game and economic theory \cite{AH81, La97, LR81},
combinatorial and graph theory \cite{GS71, HM66, Moon68},
applied probability \cite{YF17, Aldous21, Laj91}, 
statistics \cite{Dv59, Kendall55, Mallows57},
and more recently, large language models via direct preference optimization
\cite{Chen24, DPO}.
In everyday language, a tournament often means 
a {\em single-elimination} or {\em knockout tournament}. 
This paper focuses on the $n$-team {\em round-robin tournament}, 
where each team competes against every other team. 
This often refers to {\em pairwise comparisons} in the statistics literature.
A  common metric for evaluating the strengths of the teams in the tournament 
is the {\em score sequence}, or simply, the {\em score} \cite{Landau53, Mac20}.
We will consider the football model \cite{AK22},
which is an alternative to the famous Bradley--Terry model \cite{BT52}.
The main contribution of this work is to provide 
two explicit constructions of the football model that satisfies a stochastic ordering constraint,
exploiting various probabilistic techniques such as 
stochastic ordering, martingale transport and entropy optimization.
This answers a question of Aldous and Kolesnik (see Subsection \ref{sc15} for details).

\subsection{Generalized tournament matrices and Moon's theorem}
To provide context, let us first introduce the notion of {\em tournament matrices} and {\em scores}.
Consider an $n$-team tournament where each team competes $N$ times against every other opponent. 
For $i \ne j$, let $p_{ij} \in [0,1]$ be a number representing the relative strength between the teams $i$ and $j$.
It can be interpreted as:
\begin{itemize}[itemsep = 3 pt]
\item
the proportion of the number of wins of team $i$ over team $j$ in $N$ games;
\item
the probability that team $i$ wins over its opponent $j$ in a random game.
\end{itemize}

\begin{definition}[Generalized tournament matrices and scores]
\label{def:GTM}
Let $n \ge 1$.
\begin{enumerate}[itemsep = 3 pt]
\item
Denote by $\mathcal{G}_n$ the set of matrices $P=(p_{ij})_{1\leq i\neq j\leq n}$,
where $p_{ij}\in [0,1]$ and $p_{ij}+p_{ji}=1$ for each $i \ne j$.
Such matrices are called the \emph{generalized tournament matrices}  of dimension $n$. 
(Note that the diagonals are undetermined.)
Moreover, if $p_{ij}\in \{0,1\}$ for each $i\neq j$, then $P$ is a \emph{tournament matrix}.
\item
Denote by $\mathcal{G}'_n$ the set of matrices $P=(p_{ij})_{1\leq i, j\leq n}$,
where $p_{ii}=1/2$ for each $i = 1, \ldots, n$ and $(p_{ij})_{1\le i\neq j\le n}\in\mathcal{G}_n$.
\item
For $\mathbf{x} = (x_1, \ldots, x_n) \in [0,+\infty)^n$, the subset $\mathcal{G}_n(\mathbf{x})\subset \mathcal{G}_n$ (resp. $\mathcal{G}'_n(\mathbf{x})\subset \mathcal{G}'_n$) denotes the collection of matrices $P$ such that 
\[\sum_{j\neq i}p_{ij}=x_i \quad \left(\mbox{resp. }  \sum_{j=1}^n p_{ij}=x_i\right),\]
for each $i = 1, \ldots, n$.
The vector $\mathbf{x}$ is called the \emph{(generalized tournament) score}.
\end{enumerate}
\end{definition}

In the probabilistic setting, for every $P\in\mathcal G_n(\mathbf{x})$, $\mathbf{x}$ can be interpreted as the vector of expected number of wins because $x_i=\sum_{j\neq i} p_{ij}$ for each $i$. 
By convention, for $P\in\mathcal G_n'(\mathbf{x})$, a half win is artificially added by setting $p_{ii}=1/2$ (this would correspond to a match of a team against a copy of itself). 
In the deterministic case, $N x_i$ is the number of games that team $i$ wins. 
A classical theorem of \cite{Moon63} provides a necessary and sufficient condition 
for the score $\mathbf{x}$ so that $\mathcal{G}_n(\mathbf{x})$ (or $\mathcal{G}'_n(\mathbf{x})$) is not empty.
To state Moon's result, we need the notion of {\em majorization}. 

\begin{definition}[Majorization] \label{def:maj}
Let $n \ge 1$. 
On the set of vectors $\mathbf{x} = (x_1, \ldots, x_n)$ with increasing coordinates (i.e., $x_1\leq\cdots\leq x_n$),
the partial order of majorization is defined by
\begin{align}\label{eq:def_maj}
\mathbf{x} \leqmaj \mathbf{y} \quad \text{if and only if} \quad 
\sum_{i=1}^n x_i=\sum_{i=1}^n y_i\text{ and }\sum_{i=1}^k x_i \geq \sum_{i=1}^k y_i, \text{ for each }k < n.
\end{align}
This order\footnote{On $\R^n$, it is only a preorder: the reflexivity fails. See Appendix \ref{sc6} for reminders.} extends to $\R^n$ by $\mathbf{x} \leqmaj \mathbf{y}$ if and only if $\tilde{\mathbf{x}}\leqmaj \tilde{\mathbf{y}}$,
where the vectors $\tilde{\mathbf{x}}, \tilde{\mathbf{y}}$ are the rearranged versions of $\mathbf{x}, \mathbf{y}$
(i.e., $\tilde{\mathbf{x}}=(x_{\sigma(1)},\ldots,x_{\sigma(n)})$, $\tilde{\mathbf{y}}=(y_{\sigma'(1)},\ldots,y_{\sigma'(n)})$ for some permutations $\sigma, \sigma' $)
such that the coordinates of $\tilde{\mathbf{x}}, \tilde{\mathbf{y}}$ are nondecreasing.
\end{definition}

\begin{theorem}[\cite{Moon63}]\label{thm:moon}
Let $\mathbf{x} \in \R^n$. 
\begin{enumerate}[itemsep = 3 pt]
\item
$\mathcal{G}_n(\mathbf{x})$ is nonempty if and only if $\mathbf{x} \preceq(0,1,\ldots,n-1)$.
\item
$\mathcal{G}'_n(\mathbf{x})$ is nonempty if and only if $\mathbf{x} \leqmaj (\frac{1}{2},\frac{3}{2}, \ldots,n-\frac12)$. 
\end{enumerate}
\end{theorem}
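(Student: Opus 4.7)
The plan is to prove both parts simultaneously via a convex-combination argument grounded on the Hardy--Littlewood--P\'olya characterization of majorization: $\mathbf{x} \leqmaj \mathbf{y}$ holds (in the increasing-coordinates sense of Definition~\ref{def:maj}) precisely when $\mathbf{x}$ lies in the convex hull of the permutations of $\mathbf{y}$. Since $\mathcal{G}_n$ and $\mathcal{G}'_n$ are convex subsets of matrix space, and since the map $P \mapsto (\sum_{j \ne i} p_{ij})_i$ (resp. $P \mapsto (\sum_j p_{ij})_i$) is linear, it suffices to exhibit, for each permutation of $(0,1,\ldots,n-1)$ (resp.\ of $(\tfrac12,\tfrac32,\ldots,n-\tfrac12)$), a matrix in the corresponding fiber. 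This reduces the existence question to a direct construction on each vertex of the majorization polytope.

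For the necessity direction, I would first sort $\mathbf{x}$ so that $x_1 \le \cdots \le x_n$. Summing over all $i$ and using $p_{ij}+p_{ji}=1$ gives $\sum_i x_i = \binom{n}{2}$, the total sum of $(0,1,\ldots,n-1)$. For $k<n$, I would decompose
\[
\sum_{i=1}^{k} x_i \;=\; \sum_{\substack{1 \le i\ne j\le k}} p_{ij} \;+\; \sum_{i=1}^{k}\sum_{j=k+1}^{n} p_{ij} \;\ge\; \binom{k}{2} \;=\; \sum_{i=1}^{k}(i-1),
\]
which is exactly the partial-sum inequality of Definition~\ref{def:maj}, yielding $\mathbf{x} \leqmaj (0,1,\ldots,n-1)$. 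Part~(2) follows by adding $k/2$ to both sides (the contribution of the diagonals $p_{ii}=1/2$ for $i\le k$), giving $\sum_{i=1}^{k} x_i \ge \binom{k}{2}+\tfrac{k}{2} = \sum_{i=1}^{k}(i-\tfrac12)$.

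For the sufficiency direction, I would associate with every permutation $\sigma \in \mathfrak{S}_n$ the deterministic ``transitive tournament'' $P^\sigma$ defined by $p^\sigma_{ij} := \mathbf{1}_{\{\sigma(i) > \sigma(j)\}}$ for $i\ne j$ (and, in the primed case, $p^\sigma_{ii} := 1/2$). Clearly $P^\sigma \in \mathcal{G}_n$ (resp.\ $\mathcal{G}'_n$), and the $i$-th row sum equals $|\{j: \sigma(j)<\sigma(i)\}| = \sigma(i)-1$ (resp.\ $\sigma(i)-\tfrac12$); as $\sigma$ varies, these row-sum vectors realize every permutation of the target vector. If $\mathbf{x} \leqmaj (0,1,\ldots,n-1)$, then by Hardy--Littlewood--P\'olya I can write $\mathbf{x} = \sum_\sigma \lambda_\sigma (\sigma(1)-1,\ldots,\sigma(n)-1)$ with $\lambda_\sigma \ge 0$, $\sum_\sigma \lambda_\sigma = 1$, and the convex combination $P := \sum_\sigma \lambda_\sigma P^\sigma$ belongs to $\mathcal{G}_n(\mathbf{x})$. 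The same construction proves~(2).

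The main subtlety is reconciling the sign convention of Definition~\ref{def:maj} (increasing coordinates with reversed partial-sum inequalities) with the standard Hardy--Littlewood--P\'olya statement, which is usually phrased in decreasing order; this is routine once one observes that the two conventions define the same partial order on $\R^n$ (the relation is invariant under reordering by construction). A minor bookkeeping point is that one should also verify $\mathcal{G}_n(\mathbf{x})=\emptyset$ automatically whenever $\mathbf{x}$ has a negative coordinate or is not a score (i.e.\ lies outside the polytope), but this is already encompassed by the majorization criterion since a negative $x_1$ would violate the partial-sum bound at $k=1$.
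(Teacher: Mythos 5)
Your argument is correct, and it takes a genuinely different route from the paper. For the easy direction you bound $\sum_{i=1}^k x_i$ by $\binom{k}{2}$ using $p_{ij}+p_{ji}=1$ on the bottom-$k$ block, which is exactly what the paper does. For the hard direction, the paper passes through the soccer model: it invokes Strassen's theorem (the discrete version in their Theorem~\ref{th:strassen}) to produce a tuple $(\mu_1,\ldots,\mu_n)\in\Theta_n(\mathbf{x})$, then feeds it into \eqref{eq:pijsoccer} to obtain $P\in\mathcal{G}_n(\mathbf{x})$. You instead work entirely inside $\mathcal{G}_n$: you observe it is a convex set on which the score map is affine, identify the transitive-tournament matrices $P^\sigma$ as points whose scores exhaust all permutations of $(0,\ldots,n-1)$, and then use the Hardy--Littlewood--P\'olya/Rado description of the majorization polytope as the convex hull of permuted vectors to write $\mathbf{x}$ as a convex combination of those scores, hence $P := \sum_\sigma \lambda_\sigma P^\sigma \in \mathcal{G}_n(\mathbf{x})$. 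This is essentially the classical combinatorial proof; it is shorter, elementary, and needs neither the soccer model nor convex order. What it loses is precisely what the paper is after: the HLP decomposition $\{\lambda_\sigma\}$ is highly non-unique, so your construction offers no canonical choice, whereas the paper's route through $\Theta_n(\mathbf{x})$ is the one that the rest of the paper makes canonical (via entropy minimization and shadow couplings) and that exposes the link to martingale transport. The two representation theorems you and the paper lean on are in fact equivalent in this discrete setting --- Strassen for uniform measures on $n$ points is HLP/Birkhoff --- so the logical content is close; the packaging and downstream payoff differ. Your part (2) reduction (shift by $\tfrac12\mathbf{1}$, using translation-invariance of majorization) is correct, and the last paragraph's worry about negative coordinates is moot since the necessity direction already handles it.
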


Note that a result similar to Theorem \ref{thm:moon} for tournament matrices (see Definition \ref{def:GTM} (1))
and $\mathbf{x} \in \mathbb{Z}^n$ was proved by H.\ G.\ \cite{Landau53}.
See also \cite{kolesnik2023asymptotic},  \cite{BDK24, CDF23, DK24, Sto23} for recent developments on the enumeration of score sequences. 

\subsection{Zermelo(-Bradley-Terry) model}
\label{sc12}
Definition \ref{def:GTM} is quite general. 
Several parametric models have been developed to provide further structures to $p_{ij}$'s in pairwise comparisons. 
The most popular and well-studied example is the {\em Zermelo model} \cite{Zer29},
which is widely known as the {\em Bradley-Terry model} \cite{ BT52}.

 \cite{Zer29} is arguably the first to consider the inference problem in tournaments. 
In his model, each team $i$'s strength is specified by a positive number $u_i$,
which is called the ``force" ({\em Spieltärke} in German), with $\sum_{i = 1}^n u_i = 1$.
For some $k \ge 1$,
the outcome of the $k^{th}$ game between the teams $i$ and $j$ can be represented by
a Bernoulli variable $B^k_{ij}$ with parameter $\frac{u_i}{u_i + u_j}$,
where $\{B^k_{ij}=1\}$ means that team $i$ beats team $j$ in their $k^{th}$ game,
and $\{B^k_{ij}=0\}$ indicates the other way around.
Hence, the generalized tournament matrix is specified by
$p_{ij}: = \frac{u_i}{u_i + u_j}$.
Assuming that the random variables $(B^k_{ij})_{i,j,k}$ are all independent,
the maximum likelihood estimate (MLE) is used to infer the vector parameter $\mathbf{u} = (u_1, \ldots, u_n)$.
Under an irreducibility condition (see Definition \ref{def:irred} below),
the MLE is uniquely determined by a system of $n$ equations\footnote{Though the likelihood is homogeneous of degree zero, the uniqueness is guaranteed by the constraint $\sum_{i=1}^nu_i=1$.}
\begin{equation}\label{eq:moment}
x_i = \sum_{j\neq i} \frac{u_i}{u_i+u_j},\quad \text{ for  } 1 \le i \le n.
\end{equation}
The Zermelo model was rediscovered in \cite{BT52, Ford57}.
Since then, there have been various extensions such as the Plackett-Luce model \cite{Luce, Plackett} and the Mallows model \cite{Mallows57}.
See \cite{Brad76, Stob84} for historical notes and further references. 

The Zermelo model can be reparametrized by $v_i=\log u_i$, which yields
the (generalized) linear model \cite[Section 7.5]{MN83}:
\begin{equation}
\label{eq:GLM}
p_{ij} = \frac{u_i}{u_i + u_j} = \frac{1}{1 + e^{v_j - v_i}}, \quad \mbox{for } 1 \le i\neq  j \le n.
\end{equation}
\begin{remark}\label{foot:3}
Here we provide some explanations of the reparametrization $v_i=\log u_i$. 
The idea, following Thurston and Mosteller \cite{Thur27, Thur31, Mos51}, is to associate each player $i$ with an independent random variable $X_i$, and 
the generalized tournament matrix is specified by $p_{ij} = \mathbb{P}(X_i \ge X_j)$ for $1 \le i,j \le n$.
One choice is that $X_i = v_i + V$, where $v_i$ is the indicator of strength, and $V$ is the noise. 
Hence, $p_{ij} = \mathbb{P}(W \le v_i - v_j)$,
where $W$ is distributed as $V - V'$, with $V'$ an independent copy of $V$.
In practice, $W$ is often only required to be a (symmetric) random variable. 
Specializing to the case where $W$ has the cumulative distribution function $F(t) = \frac{1}{1+e^{-t}}$
recovers the model \eqref{eq:GLM}.
Alternatively, we can take  $X_i$ to be an independent exponential 
random variable with parameter $u_i^{-1}$ to recover the model \eqref{eq:GLM}. 
\end{remark}

It is easy to check that the model \eqref{eq:GLM} enjoys the property of \emph{strong stochastic transitivity} (SST):
\begin{align}\label{eq:sst}
p_{ij}\geq1/2\text{ and }p_{jk}\geq1/2\quad \Longrightarrow\quad p_{ik}\geq \max(p_{ij},p_{jk}),
\end{align}
because the left side of \eqref{eq:sst} is equivalent to $v_i\geq v_j\geq v_k$.
This property will be the center of our study in Sections \ref{sc2} and \ref{sc5}.

\subsection{The football model} 
\label{sc13}
Recently, \cite{AK22} introduced a new parametric model for tournaments -- the football model.
The idea is in the same spirit to Remark \ref{foot:3} by associating each team with a random variable, or equivalently a probability distribution,   
for paired comparisons.
It is described as follows. 

The model is parametrized by
\begin{equation}\label{def:model}
\Theta_n :=\left\{(\mu_1,\ldots,\mu_n)\in \mathcal{P}(\{0,,\ldots,n-1\})^n: \sum_{i=1}^n \mu_i=\sum_{k=0}^{n-1}\delta_k\right\},
\end{equation}
where $\mathcal{P}(\{0,\ldots,n-1\})$ denotes the set of probability measures on $\{0, \ldots, n-1\}$, and 
$\delta_k$ is the Dirac measure on $k$.
Each team $i$ is assigned a probability measure $\mu_i$.
%{\color{red} \sout{ with the constraint $\sum_{i=1}^n \mu_i=\sum_{k=0}^{n-1}\delta_k$.}}
For every $k \ge 1$, the outcome of the $k^{th}$ game between the teams 
 $i$ and $j$ is determined as follows:
Let $X^k_i\sim \mu_i$ and $X^k_j\sim \mu_j$ be independent. 
\begin{itemize}[itemsep = 3 pt]
\item
If $X^k_i>X^k_j$, then team $i$ beats team $j$.
\item
If $X^k_i <X^k_j$, then team $j$ beats team $i$.
\item
If $X^k_i =X^k_j$, then each team is granted a probability $\frac{1}{2}$ (by external randomization) to win the game.
\end{itemize}
Here the random variables $X^k_i$ and $X^k_j$ can be interpreted as the number of goals 
scored by the team $i$ and $j$,
which explains the name ``football''. When the scores are equal, a tie-breaking method such that penalty shoot-out or coin tossing will take place and decide which team wins the game with equal probability. Specifically, let $Z^k_{ij}$ be a Bernoulli variable with parameter $\frac{1}{2}$, independent of $(X^k_i,X^k_j)$.
Set
\begin{equation*}
B_{ij}^k:=\mathds{1}_{\{X^k_i > X^k_j\}} + \mathds{1}_{\{X^k_i = X^k_j, \, Z^k_{ij} =1\}},
\end{equation*}
so $\{B^k_{ij}=1\}$ means that team $i$ beats team $j$ in their $k^{th}$ game,
and $\{B^k_{ij}=0\}$ indicates the other way around.
The corresponding tournament matrix is specified by
\begin{equation}
\label{eq:pijfootball}
p_{ij}:=\P(X_i>X_j)+\frac12\P(X_i=X_j), 
\end{equation}
where $X_i\sim \mu_i$ for $1 \le i \leq n$,
and  $(X_i)_{1 \le i\leq n}$ are pairwise independent.
The constraint $\sum_{i=1}^n \mu_i=\sum_{k=0}\delta_k$ in the definition 
%of the model in 
\eqref{def:model} implies that the random number of goals of a  random team in a match is uniform on $\{0,1,\ldots,n-1\}$. 
This, of course, is not realistic when $n$ is large. However, it is a central element in the short probabilistic proof of Aldous and Kolesnik 
%of the theorem of Moon 
that will be recalled in Section \ref{sc14}.

\subsection{Proof of Moon's theorem via the football model} 
\label{sc14}
As pointed out in \cite{AK22}, a remarkable property of this model is that for $(\mu_1,\ldots,\mu_n)\in \Theta_n$,
the score $x_i=\sum_{j\neq i} p_{ij}$ of team $i$  after exactly one game against every other opponent is the expected number of goals (or points) $\E X_i=\int x\ d\mu_i(x)$.
To see this, let $\chi(\mu_i,\mu_j)$ be the probability for team $i$ to win over team $j$.
The map $\chi(\cdot, \cdot)$ can be extended into a linear map for signed measures with compact support. In fact, $\chi(\alpha,\beta):=\int f(y-x) d(\alpha\otimes\beta)(x,y)$,
where $f(z):=\mathds{1}_{\{z >0\}}+\frac{1}{2} \mathds{1}_{\{z =0\}}$.
Denoting by $\lambda: =\sum_{k=0}^{n-1} \delta_k$, 
we have 
$$\chi(\mu_i,\lambda)=\sum_{k=0}^n\chi(\mu_i,\delta_k)=\frac{1}{2}+\E X_i,$$ 
on one hand, and 
\begin{equation*}
\chi(\mu_i,\lambda)=\chi(\mu_i,\mu_i)+\sum_{j\neq i}\chi(\mu_i,\mu_j)=\frac{1}{2}+\sum_{j\neq i} p_{ij},
\end{equation*}
on the other hand. 

As a consequence, 
the more difficult implication in Moon's theorem (Theorem \ref{thm:moon}) can easily be proved by using a famous result commonly attributed to \cite{Str65} for the set of probability measures with a finite first moment. 
Earlier versions for discrete measures with finite support such as in Muirhead's inequality (see e.g., \cite{HLP}) are sufficient for this purpose.

\begin{theorem}[\cite{HLP, CFM, Str65}]\label{th:strassen}
Let $\rho$ and $\mu$ be two probability measures on $\R^d$ having a finite first moment. 
Then the following two conditions are equivalent:
\begin{enumerate}[itemsep = 3 pt]
\item 
The measures are in convex order $\rho \leqc \mu$, i.e., for any convex function $\varphi:\R^d\to \R$,
we have $\int \varphi d\rho \leq \int \varphi d\mu$.
\item 
There exists a pair of random variables $(X,Y)$ such that $X\sim \rho$, $Y\sim \mu$, and  $\E(Y|X=x)=x$ for $\rho$-almost every $x$.
\end{enumerate}
Moreover, for $d=1$, if $\rho$ and $\mu$ are uniform measures on $\{x_1\leq \cdots \leq x_n\}$ and $\{y_1\leq \cdots \leq y_n\}$ respectively, the conditions (1) and (2) are satisfied if and only if $\mathbf{x} \leqmaj \mathbf{y}$. 
In this case, the condition (2) is translated as follows: 
For $1 \le i \le n$, let $\mu_i$ be the conditional distribution of $Y$ given $X=x_i$.
We have  $\sum_{i=1}^n \mu_i=\sum_{j=1}^n \delta_{y_j}$ and $\int y\  d\mu_i(y)=x_i$ for each $i$.
\end{theorem}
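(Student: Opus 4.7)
The plan is to prove the three assertions in sequence, with the bulk of the work being the finite-dimensional case relevant to the soccer application.

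For the equivalence of (1) and (2), the direction (2)$\Rightarrow$(1) is immediate from conditional Jensen: for any convex $\varphi:\R^d\to\R$,
\[
\int \varphi\, d\mu=\E[\varphi(Y)]=\E[\E[\varphi(Y)\mid X]]\geq \E[\varphi(\E[Y\mid X])]=\E[\varphi(X)]=\int \varphi\, d\rho.
\]
The reverse implication is the substance of Strassen's theorem, which I would cite from \cite{Str65} (or from \cite{CFM} for a textbook presentation via Hahn–Banach separation of the closed convex set of laws of ``martingale couplings'' with marginal $\rho$ from any hypothetical second marginal $\mu$ that violated the convex order). For this paper only the case of finitely supported uniform measures is actually needed, and there the classical Hardy--Littlewood--P\'olya \cite{HLP} construction via iterated Robin Hood transfers (``mean-preserving spreads'') produces the required coupling explicitly.

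For the second assertion, I would use that on $\R$ the convex order $\rho\leqc \mu$ is equivalent to the two conditions $\int y\, d\rho=\int y\, d\mu$ and $\int (y-t)_+\, d\rho\leq \int (y-t)_+\, d\mu$ for every $t\in\R$, since the affine functions together with the call payoffs $y\mapsto (y-t)_+$ generate a cone whose closed convex hull contains all convex functions on $\R$ (by representing a convex $\varphi$ via the integral of its second derivative in the distributional sense). Specializing to uniform measures on $\{x_1\leq\cdots\leq x_n\}$ and $\{y_1\leq\cdots\leq y_n\}$, the mean condition reduces to $\sum x_i=\sum y_i$, and testing the call integrals at $t=y_k$ yields, after an Abel summation, the partial-sum inequalities $\sum_{i=1}^k x_i\geq \sum_{i=1}^k y_i$; conversely, majorization combined with the equal-sum condition implies all call inequalities by the same Abel summation in reverse. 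This is precisely $\mathbf{x}\leqmaj \mathbf{y}$ as defined in Definition \ref{def:maj}.

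For the soccer translation, taking $X$ uniform on $\{x_1,\ldots,x_n\}$ and defining $\mu_i$ as the conditional law of $Y$ given $X=x_i$, the marginal of $Y$ being uniform on $\{y_1,\ldots,y_n\}$ rewrites as $\tfrac{1}{n}\sum_i \mu_i=\tfrac{1}{n}\sum_j \delta_{y_j}$, i.e.\ $\sum_{i=1}^n \mu_i=\sum_{j=1}^n \delta_{y_j}$, while the martingale condition $\E[Y\mid X=x_i]=x_i$ is literally $\int y\, d\mu_i(y)=x_i$. The only genuine obstacle is the Strassen direction (1)$\Rightarrow$(2); as noted above this is either cited or, in the finite uniform case of interest, handled by an explicit Robin Hood iteration, so the rest of the proof reduces to the bookkeeping sketched above.
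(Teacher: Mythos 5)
The paper does not prove Theorem~\ref{th:strassen}; it is stated as a known result with the citations \cite{CFM, HLP, Str65}, and the body of the paper only \emph{uses} it (in the proof of Theorem~\ref{thm:moon} and of Proposition~\ref{pro:strassen+}). So there is no proof of record to compare your write-up against; I am assessing it on its own terms.

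Your account of the standard proof is correct in its overall structure and cites the right sources. The implication (2)$\Rightarrow$(1) via conditional Jensen is exactly right. For (1)$\Rightarrow$(2) you correctly identify it as Strassen's theorem, with the Hahn--Banach/Choquet route for general measures and the Hardy--Littlewood--P\'olya/Robin-Hood route for the finitely-supported case; both are accurate, and the latter is the one that matters for this paper. The translation of condition (2) into the statement that $(\mu_1,\ldots,\mu_n)\in\Theta_n(\mathbf{x})$ is also a faithful rephrasing of the martingale/marginal conditions.

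One step is stated more confidently than it deserves: the claim that ``testing the call integrals at $t=y_k$ yields, after an Abel summation, the partial-sum inequalities $\sum_{i=1}^k x_i\geq\sum_{i=1}^k y_i$.'' As written this is too quick. For example with $\mathbf{x}=(0.4,0.6)$, $\mathbf{y}=(0,1)$, testing the call (or, by the equal-mean identity, the put) payoff at $t=y_1=0$ gives an equality $0.5\leq 0.5$ and carries no information about $x_1\geq y_1$. The difference of the two call functions is piecewise linear but has kinks of both signs (at the $x_i$'s and the $y_j$'s), so it is neither convex nor concave, and a priori all breakpoints must be tested, not just the $y_k$'s. The clean route to the majorization equivalence in $d=1$ is via the quantile/integrated-cdf formulation: $\rho\leqc\mu$ with equal means is equivalent to $\int_0^s F_\rho^{-1}\geq\int_0^s F_\mu^{-1}$ for all $s\in[0,1]$, and for uniform measures on $n$ atoms the function $s\mapsto\int_0^s F_\rho^{-1}$ is piecewise linear with breakpoints at $s=k/n$, where $\int_0^{k/n}F_\rho^{-1}=\tfrac1n\sum_{i=1}^k x_i$; this directly reads off majorization. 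Alternatively one invokes Karamata's inequality (exactly the HLP content) for the forward direction and tests with the specific convex functions $\varphi(t)=\pm t$ and $\varphi(t)=(a-t)_+$ for the converse. Either fix makes your sketch airtight; as it stands, the Abel-summation sentence is the one place a referee would push back.
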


The following proof of Theorem \ref{thm:moon} by Aldous and Kolesnik is hard to beat.
\begin{proof}[Proof of Theorem \ref{thm:moon}]
The implication $\mathcal{G}_n(\mathbf{x})$ is nonempty $\Rightarrow \mathbf{x} \leqmaj (0,\ldots,n-1)$ follows from the fact that the expected number of wins of the $k$-weakest teams must be no less than $1+\cdots+(k-1)={k \choose 2}$.

For the more difficult implication, assume that $\mathbf{x} \leqmaj (0,\ldots,n-1)$.
By Theorem \ref{th:strassen},
 there exists $(\mu_1,\ldots,\mu_n)\in \Theta_n$ such that $\int y\ d\mu_i(y)=x_i$ for each $i$. 
 In the football model, 
 $\int y\ d\mu_i(y)$ is the score of team $i$.
So the matrix $P = (p_{ij})_{1 \le i\ne j \le n}$ defined by \eqref{eq:pijfootball} is a generalized tournament matrix in $\mathcal{G}_n(x)$.
\end{proof}

\subsection{Motivation and guideline}
\label{sc15}
The above short proof relies on the observation that for $\mathbf{x} \leqmaj (0,\ldots,n-1)$,
the set
\begin{equation}
\label{eq:bary}
\Theta_n(\mathbf{x}):=\left\{(\mu_1,\ldots,\mu_n)\in \Theta_n: \int y\ d\mu_i(y)=x_i \, \mbox{ for } 1 \le i \le n\right\},
\end{equation}
is nonempty. 
This fact follows from a soft argument by applying Theorem \ref{th:strassen} that is nonconstructive.
It is natural to call for an explicit construction of $(\mu_1, \ldots, \mu_n) \in \Theta_n(\mathbf{x})$,
which was also asked in \cite{AK22}.
We quote them twice. First from the abstract:
\begin{quotation}
In particular, our proof of Moon’s theorem on mean score sequences seems more constructive than previous proofs. This provides a comparatively concrete introduction to a longstanding mystery, the lack of a canonical construction for a joint distribution in the representation theorem for convex order.
\end{quotation}
Quotation from the final ``Discussion'' section in \cite{AK22}:
\begin{quotation}
To us, the most interesting part of the bigger picture surrounding convex order is that there is apparently no “canonical” choice of joint distribution in (3), (8): proofs may be constructive but they involve rather arbitrary choices and the resulting joint distributions are not easily described. Recent literature on peacocks [7]\footnote{It is the book by Hirsch, Profeta, Roynette and Yor referred to as \cite{HP11} in the present paper.} studies continuous-parameter processes increasing in convex order, via many different constructions, and ideas from that literature might be relevant in our context.
\end{quotation}

Also note that the generic dimension of $\Theta_n(\mathbf{x})$ is $(n-1)(n-2)$, 
which is larger than the dimension $\frac{1}{2}(n-1)(n-2)$ of $\mathcal{G}_n(\mathbf{x})$,
and $n-1$ of the Zermelo model. 
The fact that the football model has more degrees of freedom is an advantage
because it allows one more flexibility in modeling, e.g., to fit nontransitive situations. 
It is also a weakness because the system of equations that identifies $\Theta_n(\mathbf{x})$
is underdetermined, which is not the case for the Zermelo model. It is worth noting that \cite{kolesnik2023coxeter} recently introduced \emph{Coxeter tournaments}, where players may collaborate and play solitaire games. They developed alternative versions of the Bradley-Terry model and the football model, with proofs that are more algebraic and geometric in nature.

The main objective of this paper is to provide explicit constructions of $(\mu_1, \ldots, \mu_n) \in \Theta_n(\mathbf{x})$
with the SST as an additional constraint. This approach provides ``canonical" representations of the football model,
thereby responding to the invitation mentioned above.
Our approach is based on martingale optimal transport,
which is a topic closely related to the peacocks ({\em Processus Croissant pour l’Ordre Convexe}). 
Two different algorithmic solutions are given:
one is obtained by solving an entropic martingale transport problem via Sinkhorn's algorithm \cite{PC19, SK67},
and the other is related to the concept of a \emph{shadow} 
that was introduced to define a class of interesting martingale transport plans \cite{BeJu16, BeJu21, BDN23}.

\medskip
{\bf Organization of the paper}: 
The remainder of the paper is organized as follows. 
In Section \ref{sc2}, we prove (in an abstract way) that 
$\Theta_n(\mathbf{x})$ contains an element that yields the SST.
Two explicit constructions of $(\mu_1, \ldots, \mu_n) \in \Theta_n(\mathbf{x})$
with the stochastic ordering constraint
are presented in Sections \ref{sc3} and \ref{sc4}.
In an opposite direction, 
we illustrate in Section \ref{sc5} that $\Theta_n(\mathbf{x})$ also has nontransitive solutions.
Finally, general theoretic clarifications on the partial orders induced by a generalized tournament matrix
are given in Appendix \ref{sc6}.

%-------------------------------------------------------------------------------------------------
\section{Existence of an SST solution in the football model}
\label{sc2}

We have seen in Subsection \ref{sc12} that the Zermelo model \eqref{eq:GLM} enjoys the property of SST \eqref{eq:sst}.
Here we show that the SST also appears in the football model. 
In fact, for $\mathbf{x} \preceq(0,1,\ldots,n-1)$, 
$\Theta_n(\mathbf{x})$ contains an element that satisfies an additional constraint $\mu_1 \leqs \cdots \leqs \mu_n$ 
(see Proposition \ref{pro:strassen+}, with the definition of $\leqs$ being recalled at the end of this introduction). 
With this intermediate constraint,
the corresponding generalized tournament matrix enjoys the SST (see Proposition \ref{pro:increase2sst}).
The proof of Proposition \ref{pro:strassen+} relies on a result of \cite{MR01} (see also \cite[Section 2.6]{MuSt02})
concerning the existence of conditional martingale kernels that are increasing in stochastic order. 
This result is equivalent to the existence of 1-Lipschitz martingale transport plans  (see e.g., \cite{BeHuSt16} before Lemma 3.3), which is well known in the field of peacocks and martingale transport,
and has been discovered several times independently. 
For instance, \cite{Ke73} proved the existence in a nonconstructive way based on Choquet theory;
\cite{Low} is based on Hobson's approach to the Skorokhod embedding problem (SEP);  \cite{BeHuSt16} relies on Root's solution that is also not constructive;
and the sunset coupling \cite{BeJu21}  is also related to the SEP,
and to Kellerer’s solution for which it gives a more explicit construction.

Here two novel approaches are developed in the context of the football model. 
The first method is on entropy optimization in the space of martingale transport plans,
which will be detailed in Section \ref{sc3}. 
This method was proposed by \cite{Joe88},
but was applied to $P=(p_{ij})_{1\leq i\neq j\leq n}$ in $\mathcal{G}_n(\mathbf{x})$ instead of $\Theta_n(\mathbf{x})$.\footnote{\cite{AK22} also promoted Joe's approach. One main motivation of \cite{AK22} is to make a connection between Joe's result and Moon's theorem.}
The second method is a direct construction using shadows,
%{\color{red} \sout{based on the}  \sout{embedding}},
and will be explained in Section \ref{sc4}.

\label{page:sto} Recall that two probability measures $\mu$ and $\mu'$ on $\R$ are in stochastic order $\mu\leqs \mu'$
if and only if their cumulative distribution functions satisfy $F_\mu\geq F_{\mu'}$.
There are many equivalent criterions, e.g.,
there exists a coupling $(X,X')$ such that $X\sim \mu$, $X'\sim \mu'$ and $\P(X\leq X')=1$ (see \cite[Chapter 1]{SS07}).

\begin{lemma}\label{lem:compa}
Let $\mu$ and $\nu$ be two probability measures on $\R$ such that $\mu\leqs\nu$. 
Then for $X\sim \mu$ and $Y\sim \nu$ independent,
\begin{equation*}
\P(X>Y)+\frac12\P(X=Y)\geq 1/2.
\end{equation*}
Moreover, if $\mu\neq \nu$, then the equality is strict.
\end{lemma}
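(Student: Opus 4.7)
The plan is to reduce the inequality to a comparison of $\P(X>Y)$ with $\P(X<Y)$, and then to use the monotone quantile coupling together with an independent copy to establish that comparison. Since $\P(X>Y)+\P(X<Y)+\P(X=Y)=1$, the stated inequality $\P(X>Y)+\tfrac12\P(X=Y)\ge\tfrac12$ is equivalent to $\P(X>Y)\ge\P(X<Y)$, i.e., to $\E[\operatorname{sgn}(X-Y)]\ge 0$, so it suffices to establish this single comparison of signs.

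For the main step I would introduce two independent copies of the monotone coupling between $\mu$ and $\nu$: with $U,V$ i.i.d.\ uniform on $[0,1]$, set $X_1:=F_\mu^{-1}(U)$, $Y_1:=F_\nu^{-1}(U)$, $X_2:=F_\mu^{-1}(V)$, $Y_2:=F_\nu^{-1}(V)$. The assumption $\mu\leqs\nu$ (equivalently $F_\mu\ge F_\nu$) forces each pair $(X_i,Y_i)$ to sit on one side of the diagonal almost surely, while $X_1$ and $Y_2$ are independent with the correct marginals $\mu$ and $\nu$. Hence $\E[\operatorname{sgn}(X-Y)]=\E[\operatorname{sgn}(X_1-Y_2)]$. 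The pointwise inequality between $X_1$ and $Y_1$ combined with the monotonicity of $\operatorname{sgn}$ yields a pointwise comparison between $\operatorname{sgn}(X_1-Y_2)$ and $\operatorname{sgn}(Y_1-Y_2)$, and taking expectations together with $\E[\operatorname{sgn}(Y_1-Y_2)]=0$ (since $(Y_1,Y_2)$ is exchangeable) delivers the desired inequality between $\P(X>Y)$ and $\P(X<Y)$.

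For the strict case $\mu\ne\nu$, the main observation is that the CDFs must differ on some open interval, which translates into $\P(X_1\ne Y_1)>0$ in the monotone coupling: indeed, picking $t$ with $F_\mu(t)>F_\nu(t)$, one computes $\P(X_1\le t<Y_1)=F_\mu(t)-F_\nu(t)>0$. The main obstacle is to upgrade this to strictness of the expectation $\E[\operatorname{sgn}(X_1-Y_2)]$. I would handle this by exhibiting a positive-probability event on which the pointwise comparison between $\operatorname{sgn}(X_1-Y_2)$ and $\operatorname{sgn}(Y_1-Y_2)$ is strict, for instance the product event $\{X_1\le t<Y_1\}\cap\{Y_2\le t\}$, whose probability factors as $\bigl(F_\mu(t)-F_\nu(t)\bigr)F_\nu(t)$ by independence of $U$ and $V$. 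A small case analysis handles the boundary case where no single $t$ satisfies both $F_\mu(t)>F_\nu(t)$ and $F_\nu(t)>0$, obtained by choosing a different $t$ (or a symmetric event involving the upper tails) to witness the strict gap.
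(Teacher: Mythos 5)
Your argument for the non-strict inequality is essentially the paper's: your $(X_1,Y_1,Y_2)$ is exactly their coupled triple $(X,X',Y)$ with $X\le X'$ a.s.\ and $Y\sim\nu$ independent, and comparing $\operatorname{sgn}(X_1-Y_2)$ with $\operatorname{sgn}(Y_1-Y_2)$ is the same step as comparing $f(Y-X)$ with $f(Y-X')$, since $f(z)=\mathds{1}_{\{z>0\}}+\tfrac12\mathds{1}_{\{z=0\}}=\tfrac{1+\operatorname{sgn}(z)}{2}$. (One internal wrinkle you should flag: you announce the goal as $\P(X>Y)\ge\P(X<Y)$, but the chain $X_1\le Y_1$, monotonicity of $\operatorname{sgn}$, and $\E[\operatorname{sgn}(Y_1-Y_2)]=0$ yields $\E[\operatorname{sgn}(X_1-Y_2)]\le 0$, i.e.\ $\P(X>Y)\le\P(X<Y)$; this is also what the paper's own proof produces, so the conclusion in the lemma statement has $X$ and $Y$ transposed.)

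The strictness step, however, has a genuine gap. On the event $E_t:=\{X_1\le t<Y_1\}\cap\{Y_2\le t\}$ you do get $\operatorname{sgn}(Y_1-Y_2)=1$, but $\operatorname{sgn}(X_1-Y_2)$ is \emph{not} forced to be $<1$: both $X_1$ and $Y_2$ lie in $(-\infty,t]$ and nothing prevents $X_1>Y_2$ there. This fails even outside your ``boundary case''. Take $\mu=\tfrac12(\delta_0+\delta_2)$, $\nu=\tfrac12(\delta_1+\delta_3)$ (so $F_\mu\ge F_\nu$ and $\mu\ne\nu$) and any $t\in[2,3)$: then $F_\mu(t)=1>\tfrac12=F_\nu(t)>0$, $E_t=\{U>\tfrac12\}\cap\{V\le\tfrac12\}$ has probability $\tfrac14$, but on $E_t$ one has $X_1=2$, $Y_1=3$, $Y_2=1$, so $\operatorname{sgn}(X_1-Y_2)=1=\operatorname{sgn}(Y_1-Y_2)$ and no strict gap is witnessed. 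For the remaining $t$ with $F_\mu(t)>F_\nu(t)$ (here $t\in[0,1)$) one has $F_\nu(t)=0$, so $E_t$ is null. Thus for this pair your strategy finds no $t$ at all, yet the inequality is strict ($\P(X>Y)+\tfrac12\P(X=Y)=\tfrac14$).

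The event you want is the paper's: $\{X_1<Y_2\le Y_1\}$. On it $\operatorname{sgn}(X_1-Y_2)=-1<0\le\operatorname{sgn}(Y_1-Y_2)$, giving a strict pointwise gap. Its probability is positive: $\mu\ne\nu$ forces $\P(X_1<Y_1)>0$, and on $\{X_1<Y_1\}$ one has $F_\nu(X_1)<U\le F_\nu(Y_1)$ (otherwise $F_\nu^{-1}(U)\le X_1<Y_1$, contradicting $Y_1=F_\nu^{-1}(U)$), so $\nu\bigl((X_1,Y_1]\bigr)=F_\nu(Y_1)-F_\nu(X_1)>0$ a.s.\ there; since $Y_2\sim\nu$ is independent of $(X_1,Y_1)$, it lands in $(X_1,Y_1]$ with positive probability.
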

\begin{proof}
Consider a coupling $(X,X')$ such that $X\sim \mu$, $X'\sim \nu$ and $\P(X'\geq X)=1$. 
Let $Y\sim \nu$ be independent of $(X,X')$ so that $(X,Y)\sim \mu\times \nu$. 
Since $(Y,X')\sim \nu \times \nu$, we get
\begin{align*}
\frac{1}{2}=\P(Y>X')+\frac{1}{2}\P(Y=X')=\E[f(Y-X')],
\end{align*}
where 
\begin{equation}\label{eq:def_f}
f(z):=\mathds{1}_{\{z >0\}}+\frac{1}{2} \mathds{1}_{\{z =0\}}.
\end{equation}
Observing that $Y-X'\leq Y-X$ almost surely, we have:
\begin{align}\label{eq:cp}
\frac{1}{2}=\E[f(Y-X')]\leq \E[f(Y-X)]=\P(Y>X)+ \frac{1}{2}\P(Y=X).
\end{align}
Furthermore, if $\mu\neq \nu$,
the event $\{X<X'\}$ has nonzero probability. 
Note that $Y$ is independent from $(X,X')$, and has the same law as $X'$. 
Therefore, $\P(X<Y\leq X')>0$.
It follows $\P(f(Y-X')\leq 1/2, \, f(Y-X)=1)>0$,
and the inequality in \eqref{eq:cp} is strict.
\end{proof}

The following proposition is an improved version of Strassen's theorem,
following \cite{MR01}.

\begin{proposition}\label{pro:strassen+}
Let $\mathbf{x}, \mathbf{y}\in \R^n$ be such that $x_1\leq\cdots\leq x_n$ and $y_1\leq\cdots\leq y_n$ and assume $\mathbf{x}\leqmaj \mathbf{y}$. 
There exists $(\mu_1,\ldots, \mu_n)$ such that
 $\sum_{i=1}^n \mu_i=\sum_{k=1}^n \delta_{y_k}$,
 $\int x d\mu_i=x_i$ for each $1 \le i \le n$
 and $(\mu_i)_{1 \le i \le n}$ is increasing in $\leqs$.

If $\mathbf{y}=(0,1,\ldots,n-1)$, it can be formulated that there exists $(\mu_1,\ldots,\mu_n)\in \Theta_n(\mathbf{x})$ such that $(\mu_i)_{1 \le i \le n}$ is increasing in $\leqs$.
\end{proposition}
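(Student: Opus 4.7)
The plan is to reduce the statement to the existence of a martingale kernel that is increasing in stochastic order between two probability measures in convex order, which is the sharpened form of Strassen's theorem due to Müller and Rüschendorf \cite{MR01} alluded to just before the proposition.

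First, I would rephrase the hypothesis as a convex order. Introduce the empirical measures
\[
\rho := \frac{1}{n}\sum_{i=1}^n \delta_{x_i}, \qquad \nu := \frac{1}{n}\sum_{k=1}^n \delta_{y_k}.
\]
The assumption $\mathbf{x}\leqmaj\mathbf{y}$ is exactly the convex-order relation $\rho\leqc\nu$, by the final assertion of Theorem~\ref{th:strassen}.

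Next, I would invoke the monotone Strassen theorem of \cite{MR01}: there exists a family $(\kappa_x)_{x\in\mathrm{supp}(\rho)}$ of probability measures on $\R$ satisfying (i) the martingale identity $\int t\,d\kappa_x(t)=x$, (ii) the marginal consistency $\int \kappa_x\,d\rho(x)=\nu$, and (iii) the stochastic monotonicity $\kappa_x\leqs\kappa_{x'}$ whenever $x\le x'$. Setting $\mu_i:=\kappa_{x_i}$ for $i=1,\ldots,n$ (with $\mu_i=\mu_j$ whenever $x_i=x_j$), property (i) gives $\int t\,d\mu_i(t)=x_i$, property (iii) together with $x_1\le\cdots\le x_n$ gives $\mu_1\leqs\cdots\leqs\mu_n$, and summing (ii) against the uniform weights yields
\[
\sum_{i=1}^n \mu_i \;=\; n\int\kappa_x\,d\rho(x) \;=\; n\nu \;=\; \sum_{k=1}^n\delta_{y_k}.
\]
In the special case $\mathbf{y}=(0,1,\ldots,n-1)$, the right-hand side equals $\sum_{k=0}^{n-1}\delta_k$, so each $\mu_i$ is a probability measure supported in $\{0,\ldots,n-1\}$ and $(\mu_1,\ldots,\mu_n)\in\Theta_n(\mathbf{x})$ by the definition \eqref{eq:bary}.

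The only nontrivial ingredient is the stochastic-monotonicity refinement of Strassen used above, and this is precisely the hard step that cannot be circumvented: the classical Theorem~\ref{th:strassen} produces a kernel at every source point but gives no compatibility between the conditional distributions as the source point varies, whereas here I need them to be comparable in $\leqs$ simultaneously for every pair $x_i\le x_j$. As the discussion preceding the proposition recalls, this refinement admits several independent proofs \cite{Ke73, Low, BeHuSt16, BeJu21, MR01}; any of them supplies the kernel, and the rest of the argument is bookkeeping.
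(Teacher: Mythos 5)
Your proof is correct and follows essentially the same route as the paper: both reduce the statement to the stochastically monotone martingale kernel of Müller and Rüschendorf \cite{MR01} and then translate it into the discrete setting via the second part of Theorem~\ref{th:strassen}. Your write-up merely spells out the bookkeeping (defining $\rho,\nu$, setting $\mu_i=\kappa_{x_i}$, and summing the kernel against uniform weights) that the paper leaves implicit.
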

\begin{proof}
As seen in the introduction, Theorem \ref{th:strassen} has an improved version when $\rho, \mu$ are supported on $\R$.
In addition to $\E(Y|X=x)=x$ for $\rho$-almost every $x$, 
it was proved in \cite{MR01} that there exists a family $(\mu_x)_{x\in \R}$ of (regular) conditional laws
(i.e., $\E(f(Y)|\ X=x)=\int f(y)\ d\mu_x(y)$ for every positive $f$)
that is increasing in stochastic order: $x\leq x'$ implies $\mu_x\leqs \mu_{x'}$.
Exactly as in the second part of Theorem \ref{th:strassen}, 
this result translates in the discrete setting into the statement of the proposition.
\end{proof}

\begin{remark}\label{rem:equal}
In Proposition \ref{pro:strassen+},
if $\mu_i\leqs \mu_{i+1}$, 
then $\mu_i = \mu_{i+1}$ is equivalent to $x_i=x_{i+1}$. 
This can be seen by the coupling $(X_i,X_{i+1})$ such that $X_i\sim \mu_i$, $X_{i+1}\sim \mu_{i+1}$ and $X_i\leq X_{i+1}$ almost surely. 
Then $\E X_i=\E X_{i+1}$ is equivalent to $X_i=X_{i+1}$ almost surely.
\end{remark}

Combining Lemma \ref{lem:compa}, Proposition \ref{pro:strassen+} and Remark \ref{rem:equal} yields that
for each $\mathbf{x} \preceq (0,\ldots,n-1)$,
there exists $(\mu_1,\ldots,\mu_n)\in \Theta_n(\mathbf{x})$ such that
the corresponding generalized tournament matrix $P$ satisfies:
\begin{itemize}[itemsep = 3 pt]
\item
$p_{ij}\geq 1/2$ if and only if $x_i\geq x_j$.
\item
$p_{ij}=1/2$ if and only if $x_i=x_j$.
\end{itemize}
%The SST \eqref{eq:sst} is stronger, but it also holds as shown in the next propsition.
The next proposition proves the SST \eqref{eq:sst}.

\begin{proposition}\label{pro:increase2sst}
For $\mathbf{x}\leqmaj (0,\ldots,n-1)$,
let $(\mu_1,\ldots,\mu_n)\in \Theta_n(\mathbf{x})$ be specified as in Proposition \ref{pro:strassen+}.
Then $(\mu_i)_{1 \le i \le n}$ is increasing in $\leqs$,
and hence,
the generalized tournament matrix defined by \eqref{eq:pijfootball} 
satisfies the SST.
\end{proposition}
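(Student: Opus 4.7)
The first conclusion, that $(\mu_i)_{1\le i\le n}$ is $\leqs$-increasing, is already built into the construction supplied by Proposition~\ref{pro:strassen+}, so nothing needs to be done there. The substance of the statement is the implication ``stochastically increasing $\Rightarrow$ SST,'' which I would deduce by a short coupling argument. First I would use the discussion immediately preceding the proposition: the combination of Lemma~\ref{lem:compa}, Proposition~\ref{pro:strassen+} and Remark~\ref{rem:equal} yields that $p_{ij}\geq 1/2$ holds if and only if $x_i\geq x_j$, equivalently $\mu_j\leqs\mu_i$. Thus the hypothesis ``$p_{ij}\geq 1/2$ and $p_{jk}\geq 1/2$'' in \eqref{eq:sst} translates to the stochastic chain $\mu_k\leqs\mu_j\leqs\mu_i$.

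The plan is then to prove the two inequalities $p_{ik}\geq p_{jk}$ and $p_{ik}\geq p_{ij}$ separately, each by choosing a convenient (and different) coupling. The crucial observation to make first is that the quantity $p_{ij}$ in \eqref{eq:pijsoccer} depends only on the pair of marginals $(\mu_i,\mu_j)$ and not on how $X_i$ and $X_j$ are coupled: indeed $p_{ij}=\int f(y-x)\,d(\mu_i\otimes\mu_j)(x,y)$ with $f$ as in \eqref{eq:def_f}, and this formula is marginal by definition. Hence for the purpose of comparing two such quantities that share a common marginal, I am free to realize the involved random variables on the same space with any coupling compatible with pairwise independence.

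For $p_{ik}\geq p_{jk}$: since $\mu_j\leqs\mu_i$, by the coupling characterization recalled on page~\pageref{page:sto} I choose $X_j\sim\mu_j$ and $X_i\sim\mu_i$ with $X_j\leq X_i$ almost surely, and then an independent $X_k\sim\mu_k$. Then $X_i-X_k\geq X_j-X_k$ a.s., and since $f$ is nondecreasing we get $f(X_i-X_k)\geq f(X_j-X_k)$ pointwise; taking expectations gives $p_{ik}\geq p_{jk}$. For $p_{ik}\geq p_{ij}$ the argument is symmetric: I use $\mu_k\leqs\mu_j$ to couple $X_k\leq X_j$ almost surely, with $X_i$ independent of $(X_j,X_k)$, and invoke the monotonicity of $f$ on $X_i-X_k\geq X_i-X_j$. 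Combining the two inequalities yields $p_{ik}\geq\max(p_{ij},p_{jk})$, which is the SST \eqref{eq:sst}.

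There is no significant obstacle; the only point that deserves care is the ``marginal'' observation above, namely that reshuffling the coupling of two of the $X_\ell$'s does not change $p_{ij}$ as long as each $X_\ell$ keeps its law and the pair used in a given $p_{ij}$ remains independent. Once this is made explicit, the monotonicity of the tie-breaking function $f$ from \eqref{eq:def_f} drives the entire proof in one line for each of the two required inequalities.
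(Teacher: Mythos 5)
Your proposal is correct and is essentially the paper's own argument, up to a relabelling of indices: the paper works with $p_{kj}\geq 1/2$, $p_{ji}\geq 1/2$ and deduces $p_{ki}\geq\max(p_{ji},p_{kj})$ by coupling $X_i\leq X_j$ a.s.\ with $X_k$ independent (and symmetrically for the second inequality), exactly as you do. The only difference is presentational: you spell out explicitly that $p_{ij}$ depends only on the pair of marginals and hence is unchanged when the underlying triple is re-coupled, a point the paper's proof uses silently; making it explicit is a harmless and welcome clarification.
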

\begin{proof}
Take $(\mu_1,\ldots,\mu_n)\in \Theta_n(\mathbf{x})$ as in Proposition \ref{pro:strassen+},
and assume that $p_{kj}\geq 1/2$ and $p_{ji}\geq 1/2$.
As explained just above, we have $x_i\leq x_j\leq x_k$.
Since $\mu_i\leqs \mu_j$, 
there exist $X_i\sim \mu_i$, $X_j\sim \mu_j$ and $X_k\sim \mu_k$ 
such that $X_i\leq X_j$ almost surely and $X_k$ is independent of $(X_i,X_j)$.
With the function $f$ defined in \eqref{eq:def_f},
we have $f(X_k-X_i)\geq f(X_k-X_j)$ almost surely. Taking the expectation on both sides
 we get $p_{ki}\geq p_{kj}$. Similarly, $\mu_j\leqs \mu_k$ yields $p_{ki}\geq p_{ji}$.
Thus, $p_{ki}\geq \max(p_{ji},p_{kj})$.
\end{proof}

\begin{remark}
In view of Propositions \ref{pro:strassen+} and \ref{pro:increase2sst}, one may construct tournament matrices satisfying the SST via the (unique) strong Markov martingale coupling, denoted by $\pi$, between the discrete measures:
$$\sum_{i=1}^n\delta_{x_i} \quad\mbox{and} \quad \sum_{i=1}^n\delta_{i-1}.$$
A common approach to constructing $\pi$ relies on the fact that every discrete martingale can be embedded into a continuous-time martingale. 
More precisely, the so-called \emph{Bass martingale} $(X_t)_{0 \le t \le 1}$ satisfies ${\rm Law}(X_0, X_1) = \pi$, see, e.g., \cite{backhoff2017martingale}.
This martingale can be identified by solving a stochastic control problem, see \cite{conze2021bass, acciaio2025calibration}. However,  since 
$$\sum_{i=1}^n\delta_{x_i} \quad\mbox{and} \quad \sum_{i=1}^n\delta_{i-1}$$
are both discrete measures, convergence of the numerical schemes suggested in the aforementioned references
is not \emph{a priori} guaranteed, and approximation of this stochastic control problem becomes more technical. We propose two alternative constructions that are specifically adapted to the discrete setting.
\end{remark}

%-------------------------------------------------------------------------------------------------
\section{The entropic construction}
\label{sc3}

In this section, we provide a construction of $(\mu_1,\ldots,\mu_n)\in \Theta_n(\mathbf{x})$
satisfying the stochastic ordering constraint $\mu_1\leqs\cdots\leqs \mu_n$
by solving an entropy optimization problem.
An iterative algorithm is given. 
For ease of presentation, 
we identify $\Theta_n$ and $\Theta_n(\mathbf{x})$,
whose elements are $(\mu_1,\ldots,\mu_n) \in \mathcal{P}(\{0,,\ldots,n-1\})^n$,
with the set of matrices 
$M=(m_{ij})_{1 \le i,j \le n}$ defined by $m_{ij}:=\mu_i(\{j-1\})$.
More precisely,
\begin{align*}
\begin{aligned}
&\Theta_n= \left\{M\in \mathcal{M}_n(\R):\ M\text{ is a doubly stochastic matrix}\right\}, \\
&\Theta_n(\mathbf{x})= \left\{M\in \Theta_n: \sum_{j = 1}^n m_{ij}(j-1)=x_i \, \mbox{ for } 1 \le i \le n\right\},
\end{aligned}
\end{align*}
where the measure $\mu_i$ is encoded as a vector of $n$ nonnegative coefficients corresponding to the $i^{\text{th}}$ row of $M$
by $\mu_i=\sum_{j=1}^n m_{ij}\delta_{j-1}$ for $1 \le i \le n$.

To proceed further, we need the following irreducibility definition,
which is implicit in the results established in \cite{Ford57, MoPu70, Zer29}.

\begin{definition}[Irreducibility condition]
\label{def:irred}
Let $P\in \mathcal{G}_n(\mathbf{x})$, and 
$\tilde{\mathbf{x}}$ be the rearranged version of $\mathbf{x}$.
The following conditions are equivalent:
\begin{enumerate}[itemsep = 3 pt]
\item 
For each (nontrivial) partition $I\cup I^c$ of $\{1,\ldots,n\}$, there exist $i\in I$ and $j\in I^c$ such that $p_{ij}>0$.
\item 
For each  $i,j\in \{1,\ldots,n\}$, 
there exist $r\leq n$ and a chain of coefficients $(i_\ell)_{\ell=1,\ldots,r}$ such that $i_1=i$, $i_r=j$, and $p_{i_\ell i_{\ell+1}}>0$ for each $\ell<r$. This condition can be interpreted as the strong connectivity in the oriented graph with vertices $\{1,\ldots,n\}$ and edges $s \to t$ open if and only if $p_{st}>0$.
\item 
There exists $r \ge 1$ such that all the entries of $P^r=\underbrace{P\cdots P}_{r\text{ times}}$ are strictly positive.
\item 
$\sum_{i = 1}^n \tilde{x}_i = \frac{n(n-1)}{2}$ and $\sum_{i = 1}^k \tilde{x}_i < \frac{k(k-1)}{2}$ for $k < n$.\footnote{Notice $\frac{k(k-1)}{2}=1+2+\cdots+k$ and look at Remark \ref{rem:minent} for a similar condition where $x\preceq y\neq(1,\ldots,n)$.}
\end{enumerate}
If one of the conditions (1)-(3) is satisfied, 
we say that $P$ is an irreducible generalized tournament matrix.
If the condition (4) is satisfied, $\mathbf{x}$ is called an irreducible score.

The following two conditions for reducible (i.e., non irreducible) generalized tournament matrices and scores are equivalent:
\begin{enumerate}[itemsep = 3 pt]
\item 
There exists a partition $I\cup I^c$ such that $p_{i j}=0$ for each $(i,j)\in I\times I^c$.
\item 
There exists $1\leq k<n$ such that $(\tilde x_1,\ldots, \tilde x_k)\leqmaj (0,\ldots, k-1)$ and $(\tilde x_{k+1},\ldots,\tilde x_n)\leqmaj(k,\ldots, n-1)$.
\end{enumerate}
In particular, if $P\in \mathcal{G}_n(\mathbf{x})$ is irreducible (resp. reducible),
then so is every $Q \in \mathcal{G}_n(\mathbf{x})$.
\end{definition}

For $\mathbf{x}\leqmaj (0,\ldots,n-1)$, define the entropy by
\begin{equation}
\label{eq:defH}
H : M \in \Theta_n(\mathbf{x})\mapsto \sum_{i,j=1}^nm_{ij}\log(m_{ij}).
\end{equation}
First, we show in Subsection \ref{sc31} that if $\mathbf{x} \leqmaj (0,\ldots, n-1)$ is an irreducible score, 
then $H$ has a unique minimizer whose entries are strictly positive. 
In Subsection \ref{sc32}, we prove that $P \in \mathcal{G}_n(\mathbf{x})$ corresponding to 
this unique minimizer satisfies the SST.
Next in Subsection \ref{sc33}, we provide an algorithm and prove its convergence to the minimizer of $H$.
Finally, we extend the results to the reducible case in Subsection \ref{sc34}.

\subsection{Minimizer of $H$}
\label{sc31}

Recall the definition of irreducible scores from Definition \ref{def:irred}. 
The following proposition is useful in proving that $H$ has a unique minimizer,
as well as the convergence of Sinkhorn's algorithm in Subsection \ref{sc33}. 
As pointed out by a referee,
this result appears to be well-known as
it corresponds to the main case in \cite[Theorem 2]{Bru84},
which was further developed by \cite{BrHwPy97,ChWo92}.
%{\color{red} As pointed out by a referee this result appears to be well-known since it corresponds to the main case of \cite[Theorem 2]{Bru84} and further developments by  show that the matrix can be chosen definite symmetric. 
However, the following proof is different, and seems to be original.

 \begin{proposition}\label{pro:nozero}
Let $\mathbf{x}\leqmaj (0,\ldots,n-1)$ be an irreducible score (so that $\mathcal{G}_n(\mathbf{x})$ and $\Theta_n(\mathbf{x})$ are nonempty).
Then there exists $M \in \Theta_n(\mathbf{x})$ such that $m_{ij}>0$ for all $(i,j)$.
 \end{proposition}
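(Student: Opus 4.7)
The plan is to build $M$ directly via the Birkhoff--von Neumann theorem: I will express it as $M=\sum_\sigma c_\sigma P_\sigma$ where each permutation matrix $P_\sigma$ (indexed by $\sigma\in\mathfrak{S}_n$) receives a strictly positive weight. This will force $m_{ij}=\sum_{\sigma:\sigma(i)=j}c_\sigma$ to be a sum of $(n-1)!$ strictly positive terms, hence positive, for every $(i,j)$. The linear map $\phi:M\mapsto \bigl(\sum_{j=1}^n m_{ij}(j-1)\bigr)_{i}$ sends $P_\sigma$ to $\bigl(\sigma(1)-1,\ldots,\sigma(n)-1\bigr)$, so its image $\phi(\Theta_n)$ is the convex hull of all permutations of $(0,\ldots,n-1)$, namely the permutohedron $\Pi_n=\{\mathbf{y}:\bar{\mathbf{y}}\leqmaj (0,\ldots,n-1)\}$. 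Thus the task reduces to expressing $\mathbf{x}$ as a strictly positive convex combination of \emph{all} $n!$ vertices of $\Pi_n$.

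The first subtask is to identify the irreducibility of $\mathbf{x}$ with $\mathbf{x}$ being a relative-interior point of $\Pi_n$. The facets of $\Pi_n$ correspond exactly to the majorization inequalities becoming tight, i.e., to equalities of the form $\sum_{i=1}^k \bar{y}_i=\binom{k}{2}$ for some $1\le k<n$, and the combinatorial irreducibility condition in Definition~\ref{def:irred}(4) is precisely that none of these equalities hold. (Equivalently: a reducing partition $I$ of size $k$ yields $\sum_{i\in I}x_i=\binom{k}{2}$, which forces equality for the $k$ smallest coordinates, and conversely equality for the $k$ smallest coordinates produces such a partition $I$.) So $\mathbf{x}$ lies in the relative interior of $\Pi_n$.

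The final ingredient is a general polytope lemma that I would prove inline: every point in the relative interior of $\mathrm{conv}(v_1,\ldots,v_N)$ can be written as a convex combination of the $v_i$ with all weights strictly positive. A quick argument is to take the centroid $\bar{v}=\frac{1}{N}\sum_i v_i$ (automatically a strictly positive combination), then use $\mathbf{x}$'s relative-interior status to extend a short segment past $\mathbf{x}$ in the direction $\mathbf{x}-\bar{v}$, producing $\mathbf{x}'\in\Pi_n$ with $\mathbf{x}=(1-t)\mathbf{x}'+t\bar{v}$ for some $t>0$. Writing $\mathbf{x}'$ as any (possibly non-strict) convex combination of the vertices and collecting contributions then yields coefficients bounded below by $t/N>0$ at every vertex. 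Applied to our $\mathbf{x}$, this supplies the desired strictly positive weights $c_\sigma$ and completes the construction. The main obstacle is to verify carefully the equivalence between the combinatorial condition of Definition~\ref{def:irred}(4) and the geometric condition $\mathbf{x}\in\mathrm{relint}(\Pi_n)$; the polytope lemma itself is a short classical argument.
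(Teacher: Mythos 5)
Your argument is correct, and it takes a genuinely different route from the paper's. The paper works directly inside the matrix polytope $\Theta_n(\mathbf{x})$: it fixes an element $M$ that is maximal for the ``support inclusion'' partial order, shows via an explicit perturbation argument that the positive entries of each row form an interval and that two rows sharing at least two positive entries share the same support, deduces that $M$ is block diagonal, and then rules out having more than one block by a counting argument plugged into the irreducibility hypothesis. You instead push forward to the permutohedron $\Pi_n=\phi(\Theta_n)$ via the Birkhoff--von Neumann theorem, observe that irreducibility is exactly the condition that $\mathbf{x}$ lies in the relative interior of $\Pi_n$ (every facet inequality $\sum_{i\in S}y_i\ge\binom{|S|}{2}$ is strict, which reduces to strictness for the $k$ bottom partial sums), and then invoke the standard polytope fact that a relative-interior point is a convex combination of \emph{all} vertices with strictly positive weights; lifting that combination to $\sum_\sigma c_\sigma P_\sigma$ automatically makes every $m_{ij}$ a sum of $(n-1)!$ positive numbers.

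Each approach has advantages. Yours is shorter and more conceptual once the facet description of $\Pi_n$ is taken for granted, and it actually yields a quantitative lower bound $m_{ij}\ge (n-1)!\,t/n!=t/n$ in terms of the interiority parameter $t$, rather than a pure existence statement by contradiction. The paper's argument is more elementary and self-contained (no Birkhoff--von Neumann, no geometry of the permutohedron), and its analysis of the block structure is directly reused to motivate the non-irreducible decomposition carried out in Subsection~\ref{sc34}. Two small points worth tightening in your write-up: (i) the facets of $\Pi_n$ are indexed by the $2^n-2$ proper nonempty subsets $S$, not by the $n-1$ indices $k$, and one must explicitly use that $\min_{|S|=k}\sum_{i\in S}\bar{x}_i = \sum_{i=1}^k\bar{x}_i$ to pass from the subset inequalities to the sorted partial-sum inequalities; (ii) in verifying the equivalence with Definition~\ref{def:irred}, you should note that a reducing partition $I$ of size $k$ gives $\sum_{i\in I}x_i=\binom{k}{2}$ precisely because $p_{ij}=0$ for $(i,j)\in I\times I^c$ confines the scores of teams in $I$ to the $\binom{k}{2}$ internal games, and then apply the minimality observation from (i) to identify $I$ with the index set of the $k$ smallest coordinates.
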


 \begin{proof}
 We consider the partial order on $\Theta_n(\mathbf{x})$,
 for which $M=(m_{ij})_{ij}$ dominates $N=(n_{ij})_{ij}$
 if and only if $m_{ij}>0\Rightarrow n_{ij}>0$ for all $(i,j)$.
 Let $M$ be maximal for this order. 
Here we gather a few observations on $M$:
\begin{itemize}[itemsep = 3 pt]
\item 
For each row $i$, the set of indices $j$ with $m_{ij}>0$ is a nonempty (discrete) interval. 
Assume by contradiction that 
$m_{ij}=0$, $m_{ij_0}>0$ and $m_{ij_1}>0$ with $j_0<j<j_1$. 
Then there exists $i'\neq i$ such that $m_{i'j}>0$. 
For $\lambda\in (0,1)$ satisfying $\lambda y_{j_1}+(1-\lambda) y_{j_0}=y_j$,
consider $M+hM'$,
where $M'$ is a matrix with all entries zero except the six following $m'_{ij_0}=-m'_{i'j_0}=\lambda$, $m'_{ij_1}=m'_{i'j_1}=1-\lambda$ and $m'_{i'j}=-m'_{ij}=1$. 
For $h< \min(m_{ij},m_{i'j_0}/\lambda,m_{i'j_1}/(1-\lambda))$,
we have $M+hM' \in\Theta_n(\mathbf{x})$, which yields a contradiction to the maximality of $M$.
 \item 
 If the four entries $m_{ij}$, $m_{ij'}$, $m_{i'j}$ and $m_{i'j'}$ are positive,
 then for the two rows $i$ and $i'$ we have $m_{ik}>0\Leftrightarrow m_{i'k}>0$ (the positive entries are the same for the two rows). 
The proof follows the same argument as in the first point.
 \end{itemize}

For row $i$, let $J_i$ be the set of indices $j$ such that $m_{ij}>0$.
With the two observations above and the fact that $\sum_j m_{ij}y_j=x_i$,
we see that $M$ is a block matrix with the blocks going from left to right. 
Assume by contradiction that there are at least two blocks.
Then $J_1=\cdots=J_k=\{1,\ldots,\ell\}$ for some $k<n$,
and $J_{k+1}\neq J_k$.
In fact, $J_{k+1}\cap J_{k} = \emptyset$ or $\{\ell\}$.
Consider two cases $k\geq \ell$ and $k\leq \ell-1$:
For $k\geq \ell$, note that $(i\leq k\text{ and }j>\ell) \Rightarrow m_{ij}=0$. 
Thus, the sum of all the entries is larger than that of all entries $m_{ij}$ with $i\leq k$ or $j\geq \ell+1$, which is $k+(n-\ell)$.
(Here $k$ corresponds to the sum over the $k$ first rows, and $n-\ell$ is the sum over the $n-\ell$ last columns.)
Note that the sum of all the entries of $\Theta_n$ is $n$.
By analyzing the equality case, we get $k=\ell$ and $J_{k+1}\cap J_k=\emptyset$,
which contradicts the fact that $\mathbf{x}$ is irreducible because the probability that a team of the rows $0,\ldots,k$ defeats a team indexed by $k+1,\ldots,n$ would be zero (see the second part of Definition \ref{def:irred}).
For $k\leq \ell-1$, we consider the lower left of $M$:  $(i> k\text{ and }j<\ell) \Rightarrow m_{ij}=0$. 
The sum of all the entries is larger or equal to $(n-k)+(\ell-1)\geq n$, 
which corresponds to the $n-k$ last rows plus the $\ell-1$ first columns.
Again this sum is $n$, so $m_{1\ell}=\dots=m_{k\ell}=0$ which contradicts the fact that $J_1=\ldots=J_k=\{1,\ldots,\ell\}$.
\end{proof}

The following result shows that  if $\mathbf{x}$ is irreducible,
the entropy function $H$ on $\Theta_n(\mathbf{x})$ has a unique minimum point.

 \begin{proposition}\label{pro:uniqueminnonzero}
Let $\mathbf{x}\leqmaj (0,\ldots,n-1)$ be an irreducible score. 
Then the function $H$ defined by \eqref{eq:defH} has a unique minimizer,
whose coefficients $m_{ij}$'s are all strictly positive.
\end{proposition}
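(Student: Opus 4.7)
The plan is to combine compactness-plus-strict-convexity with the existence of an interior feasible point provided by Proposition~\ref{pro:nozero}. First, I note that $\Theta_n(\mathbf{x})$ is a compact convex polytope, obtained by intersecting the (compact) Birkhoff polytope of doubly stochastic matrices with the affine hyperplanes $\sum_{j=1}^n m_{ij}(j-1)=x_i$. Setting $\phi(u):=u\log u$ with the convention $\phi(0)=0$, the function $\phi$ is continuous on $[0,\infty)$ and strictly convex, so $H=\sum_{i,j=1}^n\phi(m_{ij})$ is continuous on $\Theta_n(\mathbf{x})$ and strictly convex on $\mathcal{M}_n(\R)$. Continuity on a compact set yields existence of a minimizer $M$, and strict convexity forces uniqueness.

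It remains to show that this unique minimizer $M$ has all entries strictly positive. I would argue by contradiction: assume $m_{i_0 j_0}=0$ for some pair $(i_0,j_0)$, pick $M^{*}\in\Theta_n(\mathbf{x})$ with $m^{*}_{ij}>0$ for every $(i,j)$ (the existence of such an $M^{*}$ is exactly Proposition~\ref{pro:nozero}), and consider the convex interpolation $M_t:=(1-t)M+tM^{*}\in\Theta_n(\mathbf{x})$ for $t\in[0,1]$. On each entry with $m_{ij}>0$, the map $t\mapsto\phi(m_{ij}(t))$ is smooth at $t=0$ with finite derivative $\phi'(m_{ij})(m^{*}_{ij}-m_{ij})$, whereas on each entry with $m_{ij}=0$ one has $m_{ij}(t)=tm^{*}_{ij}$ and
\begin{equation*}
\phi(m_{ij}(t))=tm^{*}_{ij}\log t + tm^{*}_{ij}\log m^{*}_{ij}.
\end{equation*}
As $t\to 0^{+}$ the first summand is of order $t\log t$, which dominates in absolute value the $O(t)$ contributions coming from the positive entries. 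Consequently $H(M_t)-H(M)<0$ for all sufficiently small $t>0$, contradicting the minimality of $M$.

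The main point of the argument --- and the only place where irreducibility of $\mathbf{x}$ enters --- is the appeal to Proposition~\ref{pro:nozero}: without an interior feasible point $M^{*}$, the affine constraints could conceivably confine $\Theta_n(\mathbf{x})$ to a coordinate hyperplane $\{m_{ij}=0\}$, in which case any minimizer would necessarily have a vanishing entry. Once $M^{*}$ is available, the rest is the classical barrier-function mechanism driven by the blow-up $\phi'(u)=1+\log u\to-\infty$ as $u\to 0^{+}$, and I expect no further obstacle.
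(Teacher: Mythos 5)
Your proposal is correct and follows essentially the same route as the paper's proof: compactness plus strict convexity of $H$ on $\Theta_n(\mathbf{x})$ for existence and uniqueness, followed by a line search toward the strictly positive feasible matrix $M^{*}$ guaranteed by Proposition~\ref{pro:nozero}, with the $t\log t$ blow-up along any zero coordinate contradicting minimality. The paper phrases the last step as saying the one-sided derivative of $\lambda\mapsto H(M^{(\lambda)})$ at $\lambda=0^{+}$ is $-\infty$, which is exactly the dominance you make explicit.
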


\begin{proof}
The function $H$ is continuous on the compact set $\Theta_n(\mathbf{x})$
(with the convention $0 \times \log 0 = 0$).
Moreover, it is strictly convex
so $H$ has a unique minimizer denoted by $M^{(0)}$.

Suppose by contradiction that one of the entries of $M^{(0)}$ is zero.
By Proposition \ref{pro:nozero}, 
let $M^{(1)} \in \Theta_n(\mathbf{x})$ with all strictly positive entries. 
For each $0 \le \lambda \le 1$, $M^{(\lambda)}:=\lambda M^{(1)}+(1-\lambda)M^{(0)} \in \Theta_n(\mathbf{x})$.
Because all the coefficients of $M^{(1)}$ are strictly positive,
the derivative of $\lambda\in [0,1]\mapsto H(M^{(\lambda)})$ at $\lambda=0^+$ is $-\infty$.
This contradicts the fact that $M^{(0)}$ minimizes $H$.
\end{proof}

\begin{remark}\label{rem:minent}
~
\begin{enumerate}[itemsep = 3 pt]
\item
Minimizing $P\in \mathcal{G}_n(\mathbf{x})\mapsto \sum_{ij} p_{ij}\log(p_{ij})$ (instead of $H:M \mapsto \sum_{ij} m_{ij}\log(m_{ij})$)
is used by \cite{Joe88} to construct a generalized tournament matrix of the Zermelo-Bradley-Terry model. 
Similar to Proposition \ref{pro:nozero}, we can show that if $\mathbf{x}$ is irreducible,
then $\mathcal{G}_n(\mathbf{x})$ contains some $P$ with all strictly positive entries. 
The argument is as follows: 
let $P \in \mathcal{G}_n(\mathbf{x})$ with the maximal number of nonzero entries.
Assume by contradiction that  $p_{i_1i_0}=0$, so $p_{i_0i_1}=1$.
By Definition \ref{def:irred}, there exists a chain $p_{i_1i_2},\ldots,p_{i_{N-1}i_N},p_{i_N,i_0} > 0$. 
Now we operate as follows to get $p_{i_1i_0}>0$ and guarantee $P$ stays in $\mathcal{G}_n(\mathbf{x})$: 
for sufficiently small $h$, replace $p_{i_ji_{j+1}}$ with $p_{i_ji_{j+1}}-h$, with the convention $i_{N+1}=i_0$. 
Hence, $p_{i_{j+1}i_j}$ is replaced with $p_{i_{j+1}i_j}+h$.
It suffices to use the infinite derivative of $p \mapsto p \log p$ at $p = 0^+$ as in Proposition \ref{pro:uniqueminnonzero}
to conclude. 
Note that this argument also works for other functions $f$, provided that the derivative of $f$ at $0^+$ is $-\infty$.
\item 
As was pointed out by a referee,
 both the entropic argument and Joe's theorem can be generalized to non round-robin tournaments. 
 This has been done in \cite[Theorem 7]{IsIyMcK00}, 
 where the complete graph case is equivalent to Joe's theorem.
\item
Proposition \ref{pro:uniqueminnonzero} can be easily generalized to irreducible pairs $\mathbf{x}\leqmaj \mathbf{y}$, 
where $\mathbf{y}=(k, k+1,\ldots, \ell-1, \ell)$ is different from $(0,\ldots,n-1)$. 
Definition \ref{def:irred} (4) can be adapted as follows:
$\sum_{i = 1}^{n'} \tilde{x}_{k-1+i} = \sum_{i = 1}^{n'} \tilde{y}_{k-1+i}$ and $\sum_{i = 1}^k \tilde{x}_i < \sum_{i = 1}^k \tilde{y}_i$ for $k < n'$.
 (with $n'=\ell-k+1$).
In this case, $\Theta_n(\mathbf{x})$ is replaced by the set of double stochastic matrices with 
$\sum_{j=1}^{n'} m_{ij}y_{k-1+j}=x_{k-1+i}$ for each $1 \le i \le n'$.
\end{enumerate}
\end{remark}

\subsection{Property of SST}
\label{sc32}

The following theorem shows that for an irreducible $\mathbf{x}$, the minimizer of $H$ on $\Theta_n(\mathbf{x})$
satisfies the stochastic ordering constraint,
and hence, the corresponding generalized tournament matrix 
enjoys the property of SST.

\begin{theorem}\label{them:entropty_sst}
Let $\mathbf{x}\leqmaj (0,\ldots,n-1)$ be an irreducible score,
and $M$ be the (unique) minimizer of $H$ on $\Theta_n(\mathbf{x})$.
Then $(\mu_i)_{1 \le i \le n}$ is increasing in stochastic order,
and the generalized tournament matrix corresponding to $M$ (defined by \eqref{eq:pijfootball}) satisfies the SST.
\end{theorem}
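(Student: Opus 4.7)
The plan is to combine Proposition \ref{pro:increase2sst} with an explicit Lagrangian analysis of the interior minimizer $M$ provided by Proposition \ref{pro:uniqueminnonzero}. After relabeling the rows so that $x_1\leq x_2\leq \cdots \leq x_n$, it suffices by Proposition \ref{pro:increase2sst} to establish that the rows $(\mu_i)_{1\leq i\leq n}$ of $M$ form an increasing sequence in $\leqs$; the SST of the associated generalized tournament matrix then follows automatically from that proposition.

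Because $M$ lies in the relative interior of $\Theta_n(\mathbf{x})$ by Proposition \ref{pro:uniqueminnonzero}, the KKT conditions for minimizing the strictly convex function $H$ hold without boundary complications. Attaching Lagrange multipliers $\alpha_i$, $\beta_j$ and $\gamma_i$ to the row-sum, column-sum and row-mean constraints respectively, the stationarity condition reads $\log m_{ij}+1=\alpha_i+\beta_j+\gamma_i(j-1)$, which yields the multiplicative representation
\[
m_{ij}=a_i\, b_j\, c_i^{\,j-1}, \qquad 1\leq i,j\leq n,
\]
for suitable positive scalars $a_i,b_j,c_i$. Consequently, for each fixed $i$, the row $\mu_i$ sits in a one-parameter exponential family on $\{0,1,\ldots,n-1\}$ with common reference weights $(b_j)_j$ and natural parameter $\log c_i$, and $a_i$ is just the normalizing constant.

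Two standard exponential-family facts then do the work. First, the mean $x_i=\sum_{j=1}^n(j-1)m_{ij}$ is a strictly increasing function of the natural parameter $\log c_i$, its derivative with respect to $\log c_i$ being the variance of the canonical statistic $j\mapsto j-1$ under $\mu_i$, which is strictly positive because $\mu_i$ has full support on $\{0,\ldots,n-1\}$. Hence $x_1\leq \cdots\leq x_n$ forces $c_1\leq \cdots\leq c_n$, with equality $c_i=c_{i+1}$ forcing $\mu_i=\mu_{i+1}$ (consistent with Remark \ref{rem:equal}). Second, whenever $c_i\leq c_{i+1}$ the density ratio $m_{i+1,j}/m_{ij}$ is proportional to $(c_{i+1}/c_i)^{j-1}$, nondecreasing in $j$; this monotone likelihood ratio dominance is strictly stronger than first-order stochastic dominance, so $\mu_i\leqs\mu_{i+1}$. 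Chaining these inequalities gives $\mu_1\leqs\cdots\leqs\mu_n$, and Proposition \ref{pro:increase2sst} then delivers the SST of the matrix defined via \eqref{eq:pijsoccer}.

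The step that I expect to require the most care is the passage from stationarity to the explicit factorization and, in particular, the monotone dependence of $c_i$ on $x_i$: one has to verify that the Lagrange multipliers are well defined (which follows from the affine structure of the constraints and the interior nature of $M$) and then extract, via the strict positivity of the variance of a fully supported measure, the one-parameter monotonicity that converts the order on $(x_i)_i$ into the order on $(c_i)_i$. Everything else in the argument is a direct application of either classical exponential-family calculus or results already proved in the paper.
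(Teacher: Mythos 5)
Your proposal is correct and takes essentially the same route as the paper. Both begin from Proposition \ref{pro:uniqueminnonzero} to place the minimizer in the relative interior, apply the Lagrangian/KKT stationarity condition to obtain the factorization $m_{ij}=a_i b_j c_i^{\,j-1}$, and then observe that each row is a one-parameter tilted version of a common reference measure. Where the paper proves the stochastic monotonicity of $s\mapsto\mu(s)$ by an explicit comparison of cumulative distribution functions (the estimate on $g_k$), you invoke the monotone likelihood ratio property of the exponential family plus the standard fact that the mean is a strictly increasing function of the natural parameter; these are just more compactly packaged versions of the same calculations, and your variance argument makes explicit the "in the same order as their barycenters" step that the paper leaves implicit. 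No gap.
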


\begin{proof}
Since $\mathbf{x}$ is irreducible, $H$ has a unique minimizer on $\Theta_n(\mathbf{x})$ by Proposition \ref{pro:uniqueminnonzero}.
Moreover, the minimizer is in the interior (of the affine linear space spanned by $\Theta_n(\mathbf{x}))$.
By the Karush-Kuhn-Tucker theorem \cite[Section 5.5.3]{BV04},
%\cite{KuTu51}, 
we can neglect the constraints  $m_{ij}\geq 0$,
and only consider those on the rows with multipliers $a_i$'s, 
on the columns with multipliers $b_j$'s, and on the barycenters with multipliers $c_i$'s. 
Differentiating the Lagrangian, we get:
\begin{align*}
(1+\log(m_{ij}))+a_i+b_j+c_iy_j=0, \quad \mbox{where } y_j=j-1.
\end{align*} 

So we have $m_{ij}=e^{-a_i-1}e^{-b_j}e^{-c_iy_j}$,
and the probability vector $\mu_i=(m_{i1},\ldots,m_{in})$ is specified by the vector $(e^{-b_1},\ldots,e^{-b_n})$ modulated by entrywise product with the shape vector $(e^{-c_i y_1},\ldots,e^{-c_i y_n})$ and normalized by the constant $e^{-a_i}$. 
For ease of presentation,
we fix $(b_1,\ldots,b_n)$, 
let $\bar \mu=(e^{-b_1},\ldots,e^{-b_n})$, and define $\bar \mu(s)$ by
\begin{equation*}
\bar \mu(s)=(\bar \mu_1(s),\ldots,\bar \mu_n(s))=\left(\bar \mu_1 e^{sy_1},\ldots, \bar \mu_n e^{sy_n}\right).
\end{equation*}
(Note that $\bar \mu(0)=\bar \mu$, and recall $y_j=j-1$.) 
Let $C(s)=\sum_{j=1}^n \bar \mu_j e^{sy_j}$,
so that $\mu_i=C(-c_i)^{-1}\bar \mu(-c_i)$ for $1 \le i \le n$.
Now for $s<t$ and $1\leq k< n$, we have:
\begin{align*}
\frac{\sum_{j=1}^k \bar \mu_j(t)}{\sum_{j=1}^k \bar \mu_j(s)}\leq e^{(t-s)y_k}<e^{(t-s)y_{k+1}}\leq \frac{\sum_{j=k+1}^n \bar \mu_j(t)}{\sum_{j=k+1}^n \bar \mu_j(s)}.
 \end{align*}
 Therefore,
 \[\frac{\sum_{j=1}^k \bar \mu_j(t)}{\sum_{j=k+1}^n \bar \mu_j(t)}< \frac{\sum_{j=1}^k \bar \mu_j(s)}{\sum_{j=k+1}^n \bar \mu_j(s)}\]
 so that 
 \[g_k: s\mapsto C(s)^{-1}\sum_{j=1}^k\bar \mu_j(s)=\frac{\sum_{j=1}^k \bar \mu_j(s)}{\sum_{j=1}^k \bar \mu_j(s)+\sum_{j=k+1}^n \bar \mu_j(s)}=\left(1+\frac{\sum_{j=k+1}^n \bar \mu_j(s)}{\sum_{j=1}^k \bar \mu_j(s)}\right)^{-1}\] is strictly increasing.
Comparing the cumulative distribution functions of 
\begin{align}\label{eq:norm}
\mu(s)= \sum_{j=1}^n C(s)^{-1}\bar \mu_j(s)\delta_{y_j} \quad \mbox{and} \quad \mu(t)= \sum_{j=1}^n C(t)^{-1}\bar \mu_j(t)\delta_{y_j},
\end{align}
we see that $\mu(s)\leqs \mu(t)$. Recall that $\mu_i=C(-c_i)^{-1}\mu(-c_i)$ for some $c_i\in \R$. As a result, 
the probability measures $\mu_1,\ldots,\mu_n$ are totally ordered (in stochastic order),
and are in the same order as their  barycenters $x_i=\int y\ d\mu_i(y)$.
Hence, $(\mu_i)_{1 \le i \le n}$ is increasing in stochastic order.
The property of SST follows readily from Proposition \ref{pro:increase2sst}.
\end{proof}

\subsection{Sinkhorn's algorithm and convergence}
\label{sc33}

In this subsection, we provide a numerical scheme to approximate the minimizer of the entropy in the football model with the stochastic ordering constraint (and the SST) as in Theorem \ref{them:entropty_sst}. This scheme is closely related to the one  for the entropic  optimal transport with discrete marginals, which was initiated in the pioneering work of Benamou, Carlier, Cuturi, Nenna and Peyré \cite{benamou2015iterative}. Our situation is different due to the martingale constraint. Thus, we need to adapt the existing theory carefully, depending on whether $\mathbf{x}$ is irreducible or not (see Subsection \ref{sc34}).\footnote{We point out that some similar but different algorithms have been considered for martingale optimization problems satisfying either marginal or expectation constraints \cite{Av98, MH99}.}
Our presentation closely follows the one by \cite{BaLe00}. 

Set  $C:=C_1\cap C_2\cap C_3\in\mathbb R^{n\times n}$, where 
\begin{eqnarray}
    C_1&:=& \left\{M\in \mathbb R^{n\times n}_+: \sum_{j=1}^n m_{ij} =1,\, \mbox{ for all } i=1,\ldots, n\right\},\\
    C_2&:=& \left\{M\in \mathbb R^{n\times n}_+: \sum_{i=1}^n m_{ij} =1,\, \mbox{ for all }j=1,\ldots, n\right\},\\
    C_3&:=& \left\{M\in \mathbb R^{n\times n}_+: \sum_{j=1}^n (j-1)m_{ij} =x_i,\, \mbox{ for all } i=1,\ldots, n\right\}.
\end{eqnarray} 
By Proposition \ref{pro:nozero}, $C\cap (0,\infty)^{n\times n}\neq \emptyset$.

Let $h:\mathbb R\to (-\infty,\infty]$ be defined by
$h(x) = x \log x$ for $x \ge  0$, and $\infty$ otherwise.
The goal is to solve the convex optimization problem:
\begin{equation}
\min_{M\in C} \left\{ H(M):=\sum_{i,j=1}^n h(m_{ij})\right\}.
\end{equation}
To this end, we use Bregman's (pseudo-)distance $D :\mathbb R^{n\times n} \times \idom(H) \to [0,\infty]$
defined by
\begin{equation}
\begin{aligned}
D(L,M):&=H(L)-H(M)-\nabla H(M)\cdot (L-M) \\
&=\sum_{i,j=1}^n \Big[l_{ij}\log l_{ij}-m_{ij}\log m_{ij} -\big(1+\log m_{ij} \big)(l_{ij}-m_{ij})\Big],
\end{aligned}
\end{equation}
where $\dom (H)=\mathbb R^{n\times n}_+$ denotes the domain of $H$,
and its interior and boundary are denoted by $\idom (H)=(0,\infty)^{n\times n}$ and $\bdom (H)=\{M\in \mathbb R^{n\times n}_+: m_{ij} =0 \mbox{ for some } (i,j) \}$, respectively.

Set $M^0:=(m^0_{ij}\equiv 1)\in \idom(H)$,\footnote{The initialization $M^0:=(m^0_{ij}\equiv 1) \in \idom(H)$ is chosen so that 
the entropy optimization problem $\min_{L\in C} H(L)$ is equivalent to the problem 
$\min_{L\in C} D(L,M^0)$, which can be provably solved by Sinkhorn's algorithm.} so
$D(L,M^0)= H(L) - \sum_{i,j=1}^n l_{ij} + K$,
where $K\in\mathbb R$ depends only on $M^0$.
Because $\sum_{i,j=1}^n l_{ij}=n$ for $L \in C$, we have:
\begin{equation*}
\argmin_{L\in C} H(L)=\argmin_{L\in C} D(L,M^0)\quad \mbox{and} \quad \min_{L\in C} H(L)=\min_{L\in C} D(L,M^0) + n -K.
\end{equation*}
For $k \ge 4$, let $C_k:=C_{k \mbox{ \small mod } 3}$.
For $k \ge 1$, define $M_k$ as the Bregman projection of $M_{k-1}$ on $C_k$:
\begin{equation}
M^k:=\argmin_{L\in C_k} D(L,M^{k-1}),
\end{equation}
where $M^k$ is uniquely determined.  
The following theorem establishes the convergence of the sequence $(M^k)_{k \ge 0}$,
where $M^0$ is given previously and $M^k$ is uniquely defined. Finally, the convergence of our scheme is summarized as below.
\begin{theorem}
\label{thm:breg}
Let $\mathbf{x}\leqmaj (0,\ldots,n-1)$ be an irreducible score, so that $C\cap (0,\infty)^{n\times n}\neq \emptyset$.
Then
\begin{equation}
\lim_{k\to\infty} M^k=\argmin_{L\in C} D(L,M^0) =\argmin_{L\in C} H(L).
\end{equation}
\end{theorem}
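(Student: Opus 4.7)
The plan is to recognize the theorem as a direct instance of the convergence of cyclic Bregman projections onto the intersection of finitely many closed convex sets, and to apply the convergence result of Bauschke and Lewis \cite{BaLe00}. The abstract statement says: if $H$ is a Legendre function (strictly convex and essentially smooth on $\idom(H)$), if $C_1, C_2, C_3$ are closed convex subsets of $\dom(H)$ each meeting $\idom(H)$, and if the constraint qualification $C \cap \idom(H) \neq \emptyset$ holds, then the iterates $M^k$ are well defined, remain in $\idom(H)$, and converge to $\argmin_{L \in C} D(L, M^0)$.

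First I would verify the analytic prerequisites. Shannon entropy $H$ is strictly convex on $\mathbb{R}^{n\times n}_+$ and is essentially smooth because $\partial_{m_{ij}} H = 1 + \log m_{ij} \to -\infty$ as $m_{ij} \to 0^+$. Each $C_i$ is the intersection of an affine subspace of codimension $n$ with the nonnegative orthant, hence closed and convex; and each meets $\idom(H)$ (take the uniform doubly stochastic matrix for $C_1 \cap C_2$ and any strictly positive element of $\Theta_n(\mathbf{x})$ for $C_3$). The critical qualification $C \cap (0,\infty)^{n\times n} \neq \emptyset$ is supplied by Proposition \ref{pro:nozero} under the irreducibility hypothesis on $\mathbf{x}$. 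With these ingredients the Bauschke--Lewis framework delivers existence, uniqueness and interiority of each $M^k$, together with convergence of the whole sequence to the unique minimizer of $D(\cdot, M^0)$ on $C$.

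The second equality in the theorem, $\argmin_{L\in C} D(L, M^0) = \argmin_{L\in C} H(L)$, follows from the elementary computation recorded just before the statement: since $M^0$ has all entries equal to $1$, the gradient $\nabla H(M^0)$ is the all-ones matrix and $D(L, M^0) = H(L) - \sum_{ij} l_{ij} + K$; but for $L \in C \subset C_1$ the sum $\sum_{ij} l_{ij}$ is constant, equal to $n$, so $D(\cdot, M^0)$ and $H$ differ by an additive constant on $C$ and share the same minimizer, which is unique by Proposition \ref{pro:uniqueminnonzero}.

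The main obstacle is verifying that the third constraint $C_3$, encoding the barycenter conditions $\sum_j (j-1) m_{ij} = x_i$ rather than a marginal normalization, fits the Bregman framework exactly as $C_1$ and $C_2$ do. This amounts to checking that the entropic projection onto $C_3$ is well defined and preserves strict positivity: starting from $M^{k-1} \in \idom(H)$, the Lagrangian for $\min_{L \in C_3} D(L, M^{k-1})$ yields a row-wise exponential tilt $l_{ij} = m^{k-1}_{ij}\, e^{-c_i(j-1)}$, with one scalar $c_i$ per row uniquely determined by $\sum_j (j-1) l_{ij} = x_i$ via elementary convexity. Interiority of this projection and of the analogous row- and column-scaling projections onto $C_1, C_2$ is precisely what allows the cycle to continue indefinitely, and the strict convexity of $H$ is what upgrades subsequential convergence to convergence of the full sequence $(M^k)$.
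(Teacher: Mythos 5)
Your proposal is correct and takes essentially the same route as the paper: both reduce the theorem to an application of the Bauschke--Lewis convergence result for cyclic Bregman projections (their Theorem 4.3), after verifying that the entropy is a proper closed Legendre (and co-finite, very strictly convex) function, that each $C_i$ is a closed affine-type constraint set, and that the crucial qualification $C \cap (0,\infty)^{n\times n}\neq\emptyset$ follows from Proposition~\ref{pro:nozero} under irreducibility. The paper phrases the verification at the scalar level via $h(x)=x\log x$ and explicitly lists co-finiteness among the hypotheses, whereas you argue at the matrix level and devote extra attention to the second equality and to the projection onto $C_3$; these are presentational differences only, and the substance is identical.
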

\begin{proof}
Note that $\dom (h)=\mathbb R_+$,  $\idom (h)=(0,\infty)$ and $\bdom (h)=\{0\}$. It is easy to check:
\begin{enumerate}[itemsep = 3 pt]
    \item $h$ is a proper convex function. Moreover, $h$ is closed because $\{x\in \dom(h): h(x)\le \alpha\}$ is closed for all $\alpha\in\mathbb R$.
    \item $h$ is Legendre because 
    \begin{itemize}[itemsep = 3 pt]
        \item $h$ is differentiable on $\idom(h)$;
        \item $\lim_{t\to 0+} h'(x+t(y-x))(y-x)=-\infty$ for all $x\in \bdom(f)$ and $y\in \idom(h)$;
        \item $h$ is strictly convex on $\idom(h)$.
    \end{itemize}
    \item $h$ is co-finite because $\lim_{t\to \infty} \frac{h(tx)}{t}=\infty$ for all $x \ne 0$.
    \item $h$ is very strictly convex because $h''(x)>0$ for all $x\in \idom(h)$. 
\end{enumerate}  
Since $C_1, C_2, C_3$ are all affine subsets, it suffices to apply \cite[Theorem 4.3]{BaLe00} to conclude.
\end{proof}
See also \cite{PP24} for recent developments on the convergence rate of Bregman's iteration under further technical assumptions, which we do not pursue here.

Next we propose a computational scheme inspired by Theorem \ref{thm:breg}.
The key is to compute numerically, for each $M\in (0,\infty)^{n\times n}$, its Bregman projection on $C_k$. 
We distinguish three cases:
\begin{itemize}[itemsep = 3 pt]
\item
$k = 1$: We introduce the Lagrange multiplier $\lambda=(\lambda_1,\ldots, \lambda_n)\in\mathbb R^n$,
and set $\Phi(L,\lambda):=D(L,M) +\sum_{i=1}^n \lambda_i \left(\sum_{j=1}^n l_{ij} - 1\right)$.
Differentiating $\Phi$ with respect to $m_{ij}$ and $\lambda_i$ yields
$\partial_{l_{ij}} \Phi = \log(l_{ij})-\log(m_{ij}) +\lambda_i$ and 
$\partial_{\lambda_{i}} \Phi = \sum_{j=1}^n l_{ij} - 1$.
By setting these to zero, we get 
\begin{equation}
\argmin_{L\in C_1} D(L,M) = \left(\frac{m_{ij}}{\sum_{k=1}^n m_{ik}}\right)_{1\le i, j\le n}.
\end{equation}
\item
$k = 2$: The same reasoning as in the previous case yields:
\begin{equation}
\argmin_{L\in C_2} D(L,M) = \left(\frac{m_{ij}}{\sum_{k=1}^n m_{kj}}\right)_{1\le i, j\le n}.
\end{equation}
\item
$k = 3$: Define $\Phi(L,\lambda):=D(L,M) +\sum_{i=1}^n \lambda_i \left(\sum_{j=1}^n (j-1)l_{ij} - x_i\right)$,
and differentiate $\Phi$ with respect to $m_{ij}$ and $\lambda_i$ yields
$\partial_{l_{ij}} \Phi = \log(l_{ij})-\log(m_{ij}) +\lambda_i(j-1)$ and 
$\partial_{\lambda_{i}} \Phi = \sum_{j=1}^n (j-1)l_{ij} - x_i$.
Setting these to zero yields
\begin{equation}\label{eq:bary}
\argmin_{L\in C_3} D(L,M) = \left(m_{ij}r_{ij}^{j-1}\right)_{1\le i, j\le n},
\end{equation}
 where $r_{ij}$ is the unique positive root to the polynomial equation $\sum_{j=1}^n (j-1)m_{ij}r^{j-1}- x_i = 0$.
 Let $f(r):=\sum_{j=1}^n (j-1)m_{ij}r^{j-1}- x_i$. 
It is easy to see that $f$ is strictly increasing on $[0,\infty)$ with $f(0) \le 0$ and $\lim_{r\to \infty}f(r)=+\infty$. Thus, it is easy (and quick) to find a numerical root of $f$ on $[0, \infty)$ by Newton's method.
\end{itemize}
\quad To summarize, we normalize the rows ($k=1$), the columns ($k=2$) and the barycenters of the rows ($k=3$) sequentially. 
The two first operations correspond to the standard steps in Sinkhorn's algorithm. 
The third one is not exactly a barycenter normalization of the rows $\mu_i$,
because the total mass of every row changes from $1$ to another value in \eqref{eq:bary}. 
From a theoretical viewpoint, we can merge the steps 1 and 3 to a minimization problem in two variables. 
It can be checked that the minimum is specified by the proper value of $s$ for $\mu(s)$ in \eqref{eq:norm}.
Nevertheless, the current presentation is more tractable from a numerical viewpoint.

\subsection{The reducible case}
\label{sc34}

Without loss of generality, we assume $\mathbf{x}=\tilde{\mathbf{x}}$, i.e., $x_1\le  \cdots\le  x_n$.  
The reducible case can be treated similarly on each of the  irreducible components, i.e., for score subsequences $(x_{k_r+1},\ldots,x_{k_{r+1}})\leqmaj (k_r,k_r+1,\ldots,k_{r+1}-1)$, where all inequalities in \eqref{eq:def_maj} are strict except the equality of the two total sums. 
More precisely, put $k_0:=0$, and define recursively $k_{r+1}$ whenever $k_r<n$ by
\[k_{r+1}:=\min \left\{k\in \N : k_r<k\le n \text{ and } \sum_{i=1}^k x_i =\frac{k(k-1)}{2} \right\}.\]
Let $R\in \N$ be such that $k_R=n$. Obviously, $R\ge 2$.
We see from Definition \ref{def:irred} that for the reducible case,
there are different leagues with the probability for a team in a lower league to defeat a team of an upper league being zero. 
As a result, an element $M\in \Theta_n(\mathbf{x})$ must take the form:
\begin{equation*}
M=\begin{pmatrix}
M_1 & 0 &  \cdots & 0\\
0 & M_2 &  \cdots & 0\\
\vdots & \vdots & \ddots & \vdots  \\
0 & 0 &  \cdots & M_R
\end{pmatrix},
\end{equation*}
where $M_r$ has $k_{r}-k_{r-1}$ rows, the number of teams in the $r^{\text{th}}$ league. 
Multiplying $M$ on the left with the row vectors $(1,\ldots,1,0,\ldots,0)$ with $k_r$ entries $1$, and on the right with the column vector $(y_1,\ldots,y_n)=(0,1,\ldots,n-1)$, 
permits us to prove that the $M_r$'s are square matrices. 
Conversely, every block diagonal matrix made of $R$ doubly stochastic matrices that satisfy
\[\sum_{j=1}^{k_{r}-k_{r-1}} m_{ij} (k_{r-1}+j-1)=x_{k_{r-1}+1} \quad \mbox{for each } i\le k_r-k_{r-1},\]
is clearly an element of $\Theta_n(\mathbf{x})$.
Also note that the constraints in the definition of $\Theta_n(\mathrm{x})$ are separable.\footnote{The set $\Theta_n(\mathbf{x})$ is the Cartesian product of factors $\Theta_{k_r-k_{r-1}}(\mathbf{x}_r,\mathbf{y}_r)$ with $\mathbf{x}_k=(x_{k_{r-1}+1},\ldots,x_{k_r})$ and $\mathbf{y}_k=(k_{r-1}+1,\ldots,k_r)$, and the appropriate definition concerning the constraints.}

Now we see that in the minimization problem, the function $h:m\mapsto m\log m$ is zero on the off-diagonal blocks,
and $H(M) =\sum_{r=1}^R H_r(M_r)$, where each $H_r$ is the entropy function defined on $\mathcal{M}_{k_r-k_{r-1}}(\R)$. 
Since the constraints on each block are separable,
we have $R$ separate problems, each of which has a unique minimizer.\footnote{This is a slight extension of Proposition \ref{pro:uniqueminnonzero}, which is mentioned in Remark \ref{rem:minent}. The difference is that the vector $\mathbf{y}=(0,1,\dots,n-1)$ is replaced by $(k_{r-1}+1,\ldots,k_r)$.}
Numerically, we can just solve the $R$ problems separately, 
which requires us to adapt the step $3$ properly by replacing $j-1$ by the proper $y_j$.
However, the $R$ problems can be merged into one with the same steps as in the irreducible case.
It suffices to set the initial matrix with ones on the diagonal blocks, and zeros on the off-diagonal blocks.

%-------------------------------------------------------------------------------------------------
\section{The shadow construction}
\label{sc4}

In this section, we provide another construction of $M \in \Theta_n(\mathbf{x})$ for $\mathbf{x}\leqmaj (0,\ldots,n-1)$
with the stochastic ordering constraint.
We %{\color{red} \sout{call it the shadow solution and}} 
present it in the form of a martingale coupling,
i.e., a probability measure $\pi^*$ on $\R^2$ 
with marginals the uniform measures $\{x_1,\ldots,x_n\}$ and $\{0,\ldots,n-1\}$,
and a martingale constraint corresponding to \eqref{eq:bary}.
Then $M = (\mu_1,\ldots,\mu_n)\in \Theta_n(\mathbf{x})$ is defined by 
$\mu_i(A)=n\pi^*(\{x_i\}\times A)$ for each $1 \le i \le n$ and Borel $A\subset \R$,
or $m_{ij}=\pi^*(\{(x_i\} \times \{j-1\}\})$ for each $1 \le i,j \le n$.

We start by recalling the terminology in \cite{BeJu16}.
For any closed set $E \subset \R$,
denote by $\mathfrak{M}(E)$ 
the set of measures on $E$.
Next we generalize the convex order (see Theorem \ref{th:strassen})
 to the {\em extended convex order}, denoted by $\leqe$.
For $\mu, \nu\in \mathfrak{M}(\R)$, 
we say $\mu\leqe\nu$ if and only if 
for any nonnegative convex function $f:\R\to\R$,
\begin{equation*}
\int_{\R}f(x)\mu(dx) \le \int_{\R}f(x)\nu(dx).
\end{equation*}
Furthermore, for $\mu, \nu\in \mathfrak{M}(\R)$,
 we say $\mu\leqp\nu$ if and only if for any nonnegative function $f:\R\to\R$,
\begin{equation*}
\int_{\R}f(x)\mu(dx) \le \int_{\R}f(x)\nu(dx).
\end{equation*}
Define the notion of shadow:
Let $\mu,\nu \in \mathfrak{M}(\R)$ such that $\mu\leqe\nu$.  
Then there exists a measure $S^{\nu}(\mu)$, called the {\em shadow of $\mu$ in $\nu$}, 
such that
\begin{enumerate}[itemsep = 3 pt]
\item
$S^{\nu}(\mu)\leqp\nu$.
\item
$\mu\leqc S^{\nu}(\mu)$.
\item
If $\eta$ is another measure satisfying (1) and (2), then we have $S^{\nu}(\mu)\leqc\eta$. 
\end{enumerate}

Now we use shadows
%{\color{red} \sout{the} shadows \sout{embedding}} 
to construct a martingale coupling $\pi^*$.
Denoting $\frac1k\sum_{i=1}^k\delta_{z_i}$ by $U_{(z_1,\ldots, z_k)}$ we first fix
\begin{equation}\label{eq:uniform}
\mu:= U_{(x_1,\ldots, x_n)}, \quad  \nu:=U_{(0,\ldots, n-1)},
\end{equation}
and for each permutation $\sigma$ we denote $(x_{\sigma(k)})_{1 \le k\leq n}$ by $(x^{\sigma}_k)_{1 \le k\leq n}$.
By \cite[Lemma 4.13 and Example 4.20]{BeJu16},
we can carry out the following algorithm because we have $\frac{1}{n}\delta_{x_k^\sigma}\leq_E \nu_{k-1}$ at each step $k$.

\begin{enumerate}[itemsep = 3 pt]
\item 
Let $\eta^\sigma_0:=0$ and $\nu^\sigma_0:=\nu$ (we have $\eta^\sigma_0+\nu^\sigma_0=\nu$, and $\eta^\sigma_k+\nu^\sigma_k=\nu$ for each $1 \le k \le n$ with $\eta^\sigma_0\leq\eta^\sigma_1\leq\cdots$ and $\nu^\sigma_0\geq\nu^\sigma_1\geq\cdots$).
\item 
For $k = 1,\ldots, n$, define recursively:
\begin{equation}
\eta^\sigma_k := \eta^\sigma_{k-1}+S^{\nu^\sigma_{k-1}}\left(\frac{1}{n}\delta_{x^{\sigma}_k}\right), \quad
\nu^\sigma_k:= \nu^\sigma_{k-1}-S^{\nu^\sigma_{k-1}}\left(\frac{1}{n}\delta_{x^{\sigma}_k}\right).
\end{equation}
(In particular, $\eta^\sigma_k+\nu^\sigma_k=\nu$ for each $1 \le k \le n$.)
\item 
Define $\pi^{\sigma}\in\mathcal{P}(\R^2)$ (the set of probability measures on $\mathbb{R}^2$) by
\begin{align}\label{eq:pisigma}
\pi^{\sigma}(dx,dy) := \sum_{k=1}^n \delta_{x^{\sigma}_k}(dx)\otimes S^{\nu^\sigma_{k-1}}\left(\frac{1}{n}\delta_{x^{\sigma}_k}\right)(dy).
\end{align}
\item 
Define $\pi^*\in\mathcal{P}(\R^2)$ by
\begin{align}\label{eq:pistar}
\pi^* := \frac{1}{n!} \sum_{\sigma\in\mathfrak{S}(n)} \pi^{\sigma}.
\end{align}
\end{enumerate}

Recall that the output measure $\pi^*$ can be used to construct an element of  $\Theta_n(\mathbf{x})$ by
defining $\mu_i(A)=n\pi^*(\{x_i\}\times A)$, or equivalently, $m_{ij}=n \pi^*(\{x_i\}\times\{j-1\})$. 

Figure \ref{fig_shadow} illustrates the algorithm and the associated shadows for a small example ($n=3$).

\begin{figure}
\def\svgwidth{14cm}
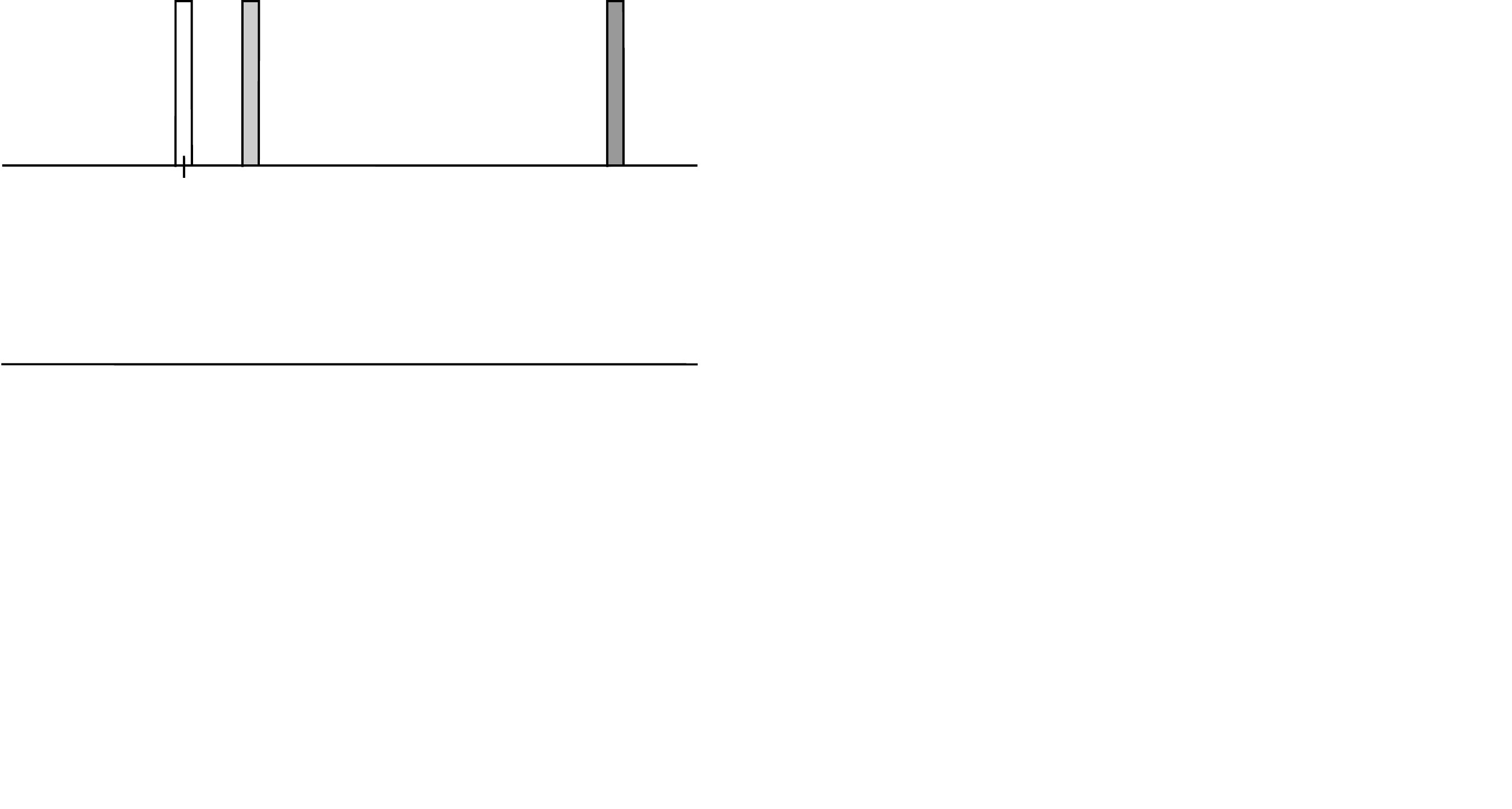
\caption{A martingale transport between $\mu=\delta_{0.5}+\delta_{0.7}+\delta_{1.8}$ and $\nu=\delta_0+\delta_1+\delta_2$ is obtained considering $3!$ permutations, two of which are represented on the two columns of this figure. On the left column the permutation $\sigma$ is the identity. $S^{\nu}(\delta_{0.5})=\frac12\delta_0+\frac12 \delta_1$ is obtained as in Remark \ref{rem:levels} for the quantile levels of $\nu$ in the interval $[0.5,1.5]$ over the whole interval $[0,3]$ of all orders. We have $\nu^\sigma_1=\nu-S^\nu(\delta_{0.5})=\frac12 \delta_0+\frac12\delta_1+\delta_2$. For the second atom $S^{\nu^\sigma_1}(\delta_{0.7})=\frac4{10}\delta_0+\frac5{10}\delta_1+\frac1{10}\delta_2$. Finally $S^{\nu^\sigma_2}(\delta_{1.8})=\nu^{\sigma}_2=\frac1{10}\delta_0+\frac9{10}\delta_2$. On the right column where $\sigma$ is different we find $S^\nu(\delta_{0.7})=\frac3{10}\delta_{0}+\frac7{10}\delta_1$ and $S^{\nu_1^\sigma}(\delta_{1.8})=\frac2{10}\delta_1+\frac8{10}\delta_2$. Finally $S^{\nu_2^\sigma}(\delta_{0.5})=\frac7{10}\delta_0+\frac1{10}\delta_1+\frac2{10}\delta_2$.}\label{fig_shadow}
\end{figure}

%% To include the image in your LaTeX document, write
%%   \input{<shadow_strategy.pdf_tex}
%%  instead of
%%   \includegraphics{<filename>.pdf}
%% To scale the image, write
%%   \def\svgwidth{<desired width>}
%%   \input{<filename>.pdf_tex}
%%  instead of
%%   \includegraphics[width=<desired width>]{<filename>.pdf}

\begin{remark}\label{rem:probaint}
Here we give a probabilistic interpretation of the above algorithm.
If we want to know how many goals are scored by team $i$,
we pick uniformly at random a sequence $x_{\sigma(1)}, x_{\sigma(2)},\ldots$ until $\sigma(k)=i$. 
During this process, the measure $\nu$ is filled with 
\begin{equation}
\label{eq:probint}
\eta^\sigma_{k-1}=S^\nu\left(\frac{1}{n}\delta_{x_{\sigma(1)}}\right) + S^{\nu_1^{\sigma}}\left(\frac{1}{n} \delta_{x_{\sigma(2)}}\right)+\cdots+S^{\nu_{k-2}^\sigma}\left(\frac{1}{n} \delta_{x_{\sigma(k-1)}}\right)\leqp \nu,
\end{equation}
and it remains $\nu^\sigma_{k-1}=\nu-\eta^\sigma_{k-1}$.
Now we embed $\frac{1}{n}\delta_{x_{\sigma(k)}}$ in $\nu^\sigma_{k-1}$,
and obtain (up to a factor of $1/n$) a probability distribution
that underlies the random number of goals.
Finally, note that this conditional distribution is obtained by conditioning on $\sigma(1),\ldots,\sigma(k)$ rather than the entire $\sigma$.
\end{remark}

The following theorem is the counterpart to Theorem \ref{them:entropty_sst},
while the algorithm mentioned above is an alternative to the approximating sequences in Subsections \ref{sc33} and \ref{sc34}.

\begin{theorem}
For $\mathbf{x} \leqmaj(0,\ldots,n-1)$, let $\pi^*\in\mathcal{P}(\R^2)$ be defined by the above algorithm.
The marginals of $\pi^*$ are the uniform measures $\mu$ and $\nu$ defined as in \eqref{eq:uniform}.
Define 
\begin{equation*}
\mu_i:=n\pi^*(\{x_i\}\times \cdot) \, \mbox{ for } 1 \le i \le n, \quad \mbox{and} \quad 
m_{ij}:= n \pi^*(\{x_i\}\times \{j-1\})\, \mbox{ for } 1 \le i,j \le n.
\end{equation*}
Then $M = (\mu_1, \ldots, \mu_n) \in \Theta_n(\mathbf{x})$.
Moreover, $(\mu_i)_{1 \le i \le n}$ is increasing in stochastic order,
and the generalized tournament matrix corresponding to $M$ (defined by \eqref{eq:pijfootball}) satisfies the SST.
\end{theorem}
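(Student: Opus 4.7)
The plan is to check, in order, the three claims: the marginals of $\pi^*$ are $\mu$ and $\nu$, the barycenter condition $\int y\, d\mu_i(y) = x_i$ holds for each $i$, and $(\mu_i)_{1 \le i \le n}$ is increasing in stochastic order. The SST then follows immediately from Proposition \ref{pro:increase2sst}.

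For the marginals, I would observe that each shadow $S^{\nu^\sigma_{k-1}}(\frac{1}{n}\delta_{x^\sigma_k})$ has total mass $\frac{1}{n}$ and barycenter $\frac{1}{n} x^\sigma_k$, both being preserved under $\leqc$ (property (2) of the shadow). Hence, for every $\sigma$, the first marginal of $\pi^\sigma$ in \eqref{eq:pisigma} equals $\frac{1}{n}\sum_{k=1}^n \delta_{x^\sigma_k} = \mu$, and the second marginal equals $\eta^\sigma_n$. Since $\eta^\sigma_k + \nu^\sigma_k = \nu$ for all $k$ by induction, $\eta^\sigma_n \leqp \nu$ has total mass $1$, forcing $\eta^\sigma_n = \nu$. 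Averaging over $\sigma$ preserves both marginals, so the marginals of $\pi^*$ are $\mu$ and $\nu$.

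For the barycenter condition, letting $k_i^\sigma := \sigma^{-1}(i)$, one has
\[
\mu_i \;=\; \frac{n}{n!}\sum_{\sigma\in\Sc} S^{\nu^\sigma_{k_i^\sigma-1}}\!\Bigl(\tfrac{1}{n}\delta_{x_i}\Bigr),
\]
and each summand integrates $y$ to $\tfrac{1}{n} x_i$. Thus $\int y\, d\mu_i(y) = x_i$. Combined with $\sum_i \mu_i = n\,\pi^*(\R \times \cdot) = n\nu = \sum_{k=0}^{n-1}\delta_k$, this yields $M = (\mu_1,\ldots,\mu_n) \in \Theta_n(\mathbf{x})$.

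The main obstacle is the stochastic monotonicity of $(\mu_i)$. Assume without loss of generality $x_1 \le \cdots \le x_n$ and fix $i < j$. My strategy is a pairing argument over permutations: pair each $\sigma$ with $\sigma' := \sigma \circ (i\,j)$, and set $a := \sigma^{-1}(i) < b := \sigma^{-1}(j)$. The sequences $(x_{\sigma(k)})_k$ and $(x_{\sigma'(k)})_k$ coincide outside positions $a,b$, where they are swapped; in particular $\nu^\sigma_{a-1} = \nu^{\sigma'}_{a-1}$. It suffices to establish the pairwise inequality
\[
\mu_i^\sigma + \mu_i^{\sigma'} \;\leqs\; \mu_j^\sigma + \mu_j^{\sigma'},
\quad \text{with } \mu_k^\tau := n\,\pi^\tau(\{x_k\}\times\,\cdot),
\]
and then sum over pairs. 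The heart of this comparison rests on the classical monotonicity of shadows from \cite{BeJu16, BeJu21}: when atoms $\frac{1}{n}\delta_{x_i}$ and $\frac{1}{n}\delta_{x_j}$ with $x_i \le x_j$ are embedded into a common residual measure, the shadow of the smaller lies to the left of the shadow of the larger in $\leqs$, and this spatial ordering persists through subsequent shadow embeddings. Making this precise requires the associativity identity $S^\nu(\alpha+\beta) = S^\nu(\alpha) + S^{\nu - S^\nu(\alpha)}(\beta)$ to disentangle the embeddings at positions $a+1,\ldots,b$ under $\sigma$ versus $\sigma'$; the hardest point is that the intermediate residuals $\nu^\sigma_{b-1}$ and $\nu^{\sigma'}_{b-1}$ differ, so the contribution at position $b$ must be compared carefully to the one at position $a$ in the paired permutation. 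Once the pairwise inequality is in hand, summing over transposed pairs $\{\sigma, \sigma'\}$ gives $\mu_i \leqs \mu_j$, and an application of Proposition \ref{pro:increase2sst} delivers the SST.
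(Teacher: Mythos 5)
Your overall strategy matches the paper's: verify the marginals and barycenters directly, then prove $\mu_i\leqs\mu_j$ (for $i<j$) by pairing each $\sigma$ with the permutation obtained by swapping where teams $i$ and $j$ appear in the embedding order, and comparing the paired contributions. The first two parts are correct and in fact a bit more self-contained than the paper, which cites lemmas from Beiglb\"ock--Juillet for the second marginal.

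The gap is at the crux. You correctly identify the pairwise inequality $\mu_i^\sigma+\mu_i^{\sigma'}\leqs\mu_j^\sigma+\mu_j^{\sigma'}$ as the key step and name the right tools (associativity and stochastic monotonicity of the shadow), but you do not actually establish it: your text describes what ``must be compared carefully'' without carrying out the comparison. The paper isolates precisely this as a lemma. Writing $\alpha=\frac1n\delta_{x_i}$, $\gamma=\frac1n\delta_{x_j}$, $\beta$ for the (sum of) atoms embedded strictly between $i$ and $j$ under $\sigma$, and working in the reduced target $\nu'=\nu-S^{\nu}(\xi)$ where $\xi$ collects the atoms embedded before position $a$, the needed statement is: if $\alpha+\beta+\gamma\leqe\nu'$ and $\alpha\leqs\gamma$, then
\begin{equation*}
S^{\nu'}(\alpha)+S^{\nu'-S^{\nu'}(\gamma+\beta)}(\alpha)\ \leqs\ S^{\nu'}(\gamma)+S^{\nu'-S^{\nu'}(\alpha+\beta)}(\gamma).
\end{equation*}
The first summands compare by monotonicity of the shadow in $\leqs$. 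For the second summands, associativity gives $S^{\nu'-S^{\nu'}(\gamma+\beta)}(\alpha)=S^{\nu'}(\alpha+\beta+\gamma)-S^{\nu'}(\gamma+\beta)$ and $S^{\nu'-S^{\nu'}(\alpha+\beta)}(\gamma)=S^{\nu'}(\alpha+\beta+\gamma)-S^{\nu'}(\alpha+\beta)$; since $\alpha+\beta\leqs\gamma+\beta$ implies $S^{\nu'}(\alpha+\beta)\leqs S^{\nu'}(\gamma+\beta)$, subtracting from the common measure $S^{\nu'}(\alpha+\beta+\gamma)$ reverses the order, giving the comparison of the second summands. Your sketch stops exactly where this argument begins, so the stochastic monotonicity --- and hence the SST --- is not yet proved. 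A small additional slip: with $a=\sigma^{-1}(i)<b=\sigma^{-1}(j)$, the paired permutation that swaps positions $a,b$ in the sequence $(x_{\sigma(k)})_k$ is $(i\,j)\circ\sigma$, not $\sigma\circ(i\,j)$; the latter swaps which teams sit at positions $i$ and $j$, not where teams $i$ and $j$ sit.
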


\begin{proof}
The first part of the theorem concerning the marginals and the score is in fact satisfied by $\pi^\sigma$ for each permutation $\sigma$, and hence by $\pi^*$. 
For $\pi^\sigma$,
this has been proved in \cite[Lemma 4.13 and Example 4.20]{BeJu16}.
Note that we can check directly from \eqref{eq:pisigma} that the first marginal of $\pi^*$ is 
$U_{(x_1,\ldots,x_n)}$.
We also see from this equation that $\mu_i=n  S^{\nu_{\sigma^{-1}(i)-1}^{\sigma}}\left(\frac{1}{n}\delta_{x_i}\right)$,
so $\frac{1}{n}\delta_{x_i}\leqc \frac{1}{n} \mu_i$ and $\delta_{x_i}\leqc \mu_i$, which implies $\int y d\mu_i(x)=x_i$.
(This is a basic fact in martingale transport theory, but it can also be checked by integrating $\varphi:y\mapsto \pm y$ as in Theorem \ref{th:strassen} (1).)
The fact that the second marginal of $\pi^\sigma$ is $U_{(0,\ldots,n-1)}$ is less direct,
but is still a consequence of \cite[Section 4]{BeJu16}. 
This second marginal is also $\sum^n_{k=1}S^{\nu^\sigma_{k-1}}\left(\frac{1}{n}\delta_{\sigma(k)}\right)$,
where
each of the summands is a measure of mass $\frac{1}{n}$, and is a shadow in the measure $\nu_{k-1}^\sigma\leqp\nu$,
 whose mass is $n-(k-1)$. 
The measure $\nu^\sigma_k$ is the new measure of mass $n-k$ obtained by the formula
$\nu^\sigma_k=\nu^\sigma_{k-1}-S^{\nu^\sigma_{k-1}}\left(\frac{1}{n}\delta_{\sigma(k)}\right)$ after embedding.

The second part on the fact that $(\mu_i)_{1\leq i \leq n}$ is increasing in stochastic order is purely a property of $\pi^*$ obtained by symmetrization.
%It would not be true for $(\mu_i^\sigma)_{1\leq i \leq n}$ (with obvious notation). 
Let $1 \le i < i' \le n$. We need to prove $\mu_i\leqs\mu_{i'}$, i.e.,
\begin{align}\label{eq:compare}
\frac{1}{n!}\sum_{\sigma\in \mathfrak{S}(n)}S^{\nu^{\sigma}_{\sigma^{-1}(i)-1}}\left(\frac{1}{n}\delta_{x_i}\right)\leqs\frac{1}{n!}\sum_{\sigma\in \mathfrak{S}(n)}S^{\nu^\sigma_{\sigma^{-1}(i')-1}}\left(\frac{1}{n}\delta_{x_{i'}}\right).
\end{align}
Now we explain how to derive \eqref{eq:compare} from \eqref{eq:pisigma} and \eqref{eq:pistar}.
For the sum on the left side, we embed $x_i$ (in fact the atomic measure $\frac{1}{n}\delta_{x_i})$ in $n!$ different ways depending on $\sigma$.
Here $\sigma^{-1}(i)-1$ is the number of goals $x_{\sigma(a)}$ (in fact the atomic measures $\frac{1}{n}\delta_{x_{\sigma(a)}}$)
that are embedded in $\nu$ before embedding $x_i$.
The remaining part of $\nu$, where $\frac{1}{n}\delta_{x_i}$ is embedded, is the measure $\nu^\sigma_{\sigma^{-1}(i)-1}$.
The same is done for $x_{i'}$ for the sum on the right side. 
Since stochastic ordering is preserved by addition,
it suffices to prove \eqref{eq:compare} for the sum of two terms 
$\sigma$ and $\sigma'=(i,i')\circ \sigma$, where $(i,i')$ is the transposition of $i$ and $i'$
Assume without loss of generality that $k<k'$,
with $k=\sigma^{-1}(i)$ and $k'=\sigma^{-1}(i')$. 
Also assume for simplicity $k = 1$ (we will explain how to relax this assumption later).
Let 
\begin{equation*}
\alpha=\frac{1}{n}\delta_{x_i}, \quad \beta= \frac{1}{n}\sum_{a=2}^{k'-1}  \delta_{x_{\sigma(a)}}, \quad
\gamma=\frac{1}{n}\delta_{x_{i'}}.
\end{equation*}
Note that $\alpha\leqs\gamma$.
By Lemma \ref{lem:joli} (whose proof is postponed),
we have:
 \begin{align}\label{eq:stuff}
&S^\nu\left( \frac{1}{n}\delta_{x_i}\right)+S^{\nu-S^\nu(\sum_{a=1}^{k'-1}\frac{1}{n}\delta_{x_{\sigma(a)}})}\left(\frac{1}{n}\delta_{x_{i'}}\right)
&\leqs S^\nu\left(\frac{1}{n}\delta_{x_{i'}}\right)+S^{\nu-S^\nu(\sum_{a=2}^{k'}\frac{1}{n}\delta_{x_{\sigma(a)}})}\left(\frac{1}{n}\delta_{x_{i'}}\right).
\end{align}
We recognize (without the factor $1/n!$) the contribution of $\sigma$ and $\sigma'$ to the two sums defining $\mu_i$ and $\mu_{i'}$ (one part for $k=1$ and the other for $k'$).
 By associativity of shadows from the theory in \cite{BeJu16},
 $S^\nu(\sum_{a=1}^{k'-1}\frac{1}{n}\delta_{x_{\sigma(a)}})$ and $S^\nu(\sum_{a=2}^{k'}\frac{1}{n}\delta_{x_{\sigma(a)}})$ are the measure $\eta_{k'-1}^\sigma$ and $\eta_{k'-1}^{\sigma'}$ respectively,
so we recognize in \eqref{eq:stuff} the measures $\nu^\sigma_{k}=\nu-\eta^\sigma_{k-1}$ and $\nu^\sigma_{k'}=\nu-\eta^\sigma_{k'-1}$ appearing in \eqref{eq:compare}. 
It remains the case $1<k<k'$.
Let $\xi=\sum_{a=1}^{k-1}\delta_{x_a}$. 
The two Dirac masses  $\alpha=\delta_{x_{\sigma(k)}}=\delta_{x_{\sigma'(k')}}$ and $\gamma=\delta_{x_{\sigma(k')}}=\delta_{x_{\sigma(k')}}$ are no longer embedded in
$\nu$ and $\nu-S^{\nu}(\gamma+\beta)$,
and $\nu$ and $\nu-S^{\nu}(\alpha+\beta)$ respectively,
but in $\nu':=\nu-S^{\nu}(\xi)$ and $\nu-S^{\nu}(\xi+\gamma+\beta)=\nu'-S^{\nu'}(\gamma+\beta)$,
and $\nu'$ and $\nu-S^{\nu}(\xi+\alpha+\beta)=\nu'-S^{\nu'}(\alpha+\beta)$ respectively. 
It suffices to apply Lemma \ref{lem:joli} to $\nu'$ instead of $\nu$ to conclude.
\end{proof}

We conclude this section with the following lemma concerning shadows.
%{\color{red} \sout{for the }concerning shadows \sout{embedding}}. 

\begin{lemma}\label{lem:joli}
If $\alpha+\beta+\gamma\leqe \nu$ and $\alpha\leqs \gamma$, then
\[S^{\nu}(\alpha)+S^{\nu-S^\nu(\gamma+\beta)}(\alpha)\leqs S^{\nu}(\gamma)+S^{\nu-S^\nu(\alpha+\beta)}(\gamma)\]
\end{lemma}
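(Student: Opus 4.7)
The plan is to reduce the inequality, via the associativity of the shadow construction \cite[Theorem 4.8]{BeJu16}, to the following stochastic-order monotonicity of the shadow operation: if $\alpha'\leqs\gamma'$ are two measures of equal mass both dominated in extended convex order by a common $\eta$, then $S^\eta(\alpha')\leqs S^\eta(\gamma')$. Once this monotonicity is applied to two suitable base measures, summing the two resulting CDF inequalities yields exactly the sought comparison.

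First I would apply associativity to the decomposition of $\alpha+\beta+\gamma$ in two different orderings of the three summands. This gives
\begin{equation*}
S^\nu(\alpha+\beta)+S^{\nu''}(\gamma)\ =\ S^\nu(\alpha+\beta+\gamma)\ =\ S^\nu(\gamma+\beta)+S^{\nu'}(\alpha),
\end{equation*}
with $\nu':=\nu-S^\nu(\gamma+\beta)$ and $\nu'':=\nu-S^\nu(\alpha+\beta)$ as in the statement. Rearranging and invoking associativity a third time -- this time extracting $\beta$ first and writing $\tilde\nu:=\nu-S^\nu(\beta)$ -- delivers the key identity of signed measures
\begin{equation*}
S^{\nu''}(\gamma)-S^{\nu'}(\alpha)\ =\ S^\nu(\gamma+\beta)-S^\nu(\alpha+\beta)\ =\ S^{\tilde\nu}(\gamma)-S^{\tilde\nu}(\alpha).
\end{equation*}
Passing to cumulative distribution functions, which extend linearly to signed measures of finite variation, the target inequality becomes equivalent to the pointwise statement
\begin{equation*}
\bigl[F_{S^\nu(\gamma)}(x)-F_{S^\nu(\alpha)}(x)\bigr]+\bigl[F_{S^{\tilde\nu}(\gamma)}(x)-F_{S^{\tilde\nu}(\alpha)}(x)\bigr]\ \leq\ 0, \quad x\in\R.
\end{equation*}

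I would then close the argument by applying the stochastic-order monotonicity of shadows to each bracket separately. The hypothesis $\alpha+\beta+\gamma\leqe\nu$ already gives $\alpha,\gamma\leqe\nu$, and the associativity identity $S^\nu(\alpha+\beta+\gamma)=S^\nu(\beta)+S^{\tilde\nu}(\alpha+\gamma)$ furnishes $\alpha,\gamma\leqe\tilde\nu$ as well; combined with $\alpha\leqs\gamma$ and the equality of the masses of $\alpha$ and $\gamma$, the monotonicity then yields both $S^\nu(\alpha)\leqs S^\nu(\gamma)$ and $S^{\tilde\nu}(\alpha)\leqs S^{\tilde\nu}(\gamma)$, so each bracket above is nonpositive and their sum is nonpositive, as required. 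The main obstacle is the monotonicity input itself, which is the stochastic-order analogue of the convex-order monotonicity of shadows; I would either extract it from the explicit quantile/left-curtain description of the shadow in \cite{BeJu16,BeJu21,BDN23}, or, in the atomic setting $\nu=\sum_{k=0}^{n-1}\delta_k$ that we actually need here, give a short induction on the number of atoms of $\alpha'$ and $\gamma'$ based on the same characterization. Once this monotonicity is granted, every other step in the proof is routine bookkeeping with the associativity identity.
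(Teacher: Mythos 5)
Your proposal is correct and essentially mirrors the paper's proof: both rest on the associativity of the shadow and on the stochastic-order monotonicity $\alpha'\leqs\gamma'\Rightarrow S^{\eta}(\alpha')\leqs S^{\eta}(\gamma')$, with your extra detour through $\tilde\nu=\nu-S^{\nu}(\beta)$ being equivalent (again by associativity) to the paper's direct comparison $S^{\nu}(\alpha+\beta)\leqs S^{\nu}(\gamma+\beta)$. The only loose end you flag — where to get the monotonicity input — is not actually open: it is established in Juillet's earlier work (cited in the paper as \cite{Ju16}), so you need not re-derive it from the quantile description or by induction.
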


\begin{proof}
Since $\alpha\leqs \gamma$, it follows from \cite{Ju16} that $S^{\nu}(\alpha)\leqs S^{\nu}(\gamma)$.
 Since $\alpha+\beta\leqs \gamma+\beta$, the same argument shows 
 $S^\nu(\alpha+\beta)\leqs S^\nu(\gamma+\beta)$.
By associativity of the shadows (with $\alpha+\beta+\gamma\leqe \nu$), 
we have:
\begin{equation*}
S^{\nu-S^\nu(\gamma+\beta)}(\alpha) = S^{\nu}(\alpha+\beta+\gamma)-S^\nu(\gamma+\beta) \, \mbox{ and } \,
S^{\nu-S^\nu(\alpha+\beta)}(\gamma)=S^{\nu}(\alpha+\beta+\gamma)-S^\nu(\alpha+\beta).
\end{equation*}
Thus, $S^{\nu-S^\nu(\gamma+\beta)}(\alpha)\leqs S^{\nu-S^\nu(\alpha+\beta)}(\gamma)$,
which yields the desired result. 
\end{proof}

\begin{remark}\label{rem:levels}
Computing the distribution $\mu_i$ that gives the number of goals scored by team $i$  
seems to be difficult for large $n$, because there are $n!$ terms in the sum.
Nevertheless, it is relatively easy to simulate the random number of goals from the probabilistic interpretation of Remark \ref{rem:probaint}. %\eqref{eq:probint} of the algorithm. 
More precisely,
the shadow $S^{\nu'}(m\delta_x)$ is the unique measure of mass $m$ and center of mass $x$ that takes the form $(F^{-1}_{\nu'})_{\#}\mbox{\em Leb}_{]\alpha,\beta]}$.
Here $F^{-1}_{\nu'}$ is any quantile function of $\nu'$, 
i.e., any nondecreasing function such that for every $\beta\in ]0,1]$,
the measure $\nu'_\beta:=(F^{-1}_{\nu'})_{\#}\mbox{\em Leb}_{[0,\beta]}$ satisfies $\nu'_\beta\leqp\nu'$ and is of mass $\beta$. 
One such example is the inverse of the cumulative distribution function $F^{-1}_{\nu'}(\gamma)=\inf\{x\in \R: \nu'(]-\infty,x])\geq \gamma\}$.
\end{remark}

\begin{remark}
Answering a question from a referee, there is at least one other algorithmic construction for the part of Theorem \ref{th:strassen} 
about $\mathbf{x}\preceq \mathbf{y}$ can be interpreted by martingale transport plans. Indeed, the inverse operations of the elementary $T$-transforms (also called Dalton transfers or Robin Hood transfers) defined  in \cite{MOA} can be interpreted by elementary martingale transport plans similar to those in \cite[Equation (6)]{Ju_seminaire}. However, these new old solutions seem to be less canonical than the two solutions detailed above.
 
 Let us give a more detailed description. A $T$-transform is a matrix that maps $\mathbf{y}$ (the more spread vector) to $\mathbf{x}$ just by modifying to entries $y_i$ and $y_j$ where $y_i<y_j$. Some positive quantity $\Lambda<y_j-y_i$ is added to $y_i$ and the same quantity is substracted to $y_j$. If we denote by $\Lambda=\lambda (y_j-y_i)$, which is always possible for some $\lambda>0$ we obtain 
\begin{align*}
x_i&=y_i+\Lambda=y_i+\lambda (y_j-y_i)\\
x_j&=y_j+\Lambda=y_j-\lambda (y_j-y_i).
\end{align*}
This can be written $\mathbf{y}=D\mathbf{x}$ for $D=\lambda I+(1-\lambda) Q$ where $Q$ is the permutation matrix encoding the transposition of the $i$ and $j$ coordinates. Of course we have $y_i<\min(x_i,x_j)\leq \max(x_i,x_j)<y_j$ and also $\frac{y_i+y_j}2=\frac{x_i+x_j}2$. The theory of majorization tells us that it is possible to shrink step by step any $\mathbf{y}$ to any $\mathbf{x}\preceq \mathbf{y}$ just composing $T$-transforms (note that we can restrict ourselves to $T$-transforms with $\lambda\leq 1/2$ so that we don't need to reorder the coordinates of $x$).

A parallel theory that may have been already written several times is to spread in place of shrinking and use the inverse transforms: for $\delta>0$ and $P$ the permutation matrix corresponding to the transposition of the $i$ and $j$ coordinates (with $x_j>x_i$), we define $D'$ by $D'=(1-\lambda)I-\delta P$. We get
\[\mathbf{x}\preceq \mathbf{x}D'=:\mathbf{y}\]
This operation can be repeated using finitely many matrices of type $D'$ to move any $\mathbf{x}$ to any $\mathbf{y}$ satisfying $\mathbf{x}\preceq \mathbf{y}$. Again $y_i<\min(x_i,x_j)\leq \max(x_i,x_j)<y_j$ and also $\frac{y_i+y_j}2=\frac{x_i+x_j}2$.

If $x_1<\cdots<x_n$ and $y_1<\cdots<y_n$ an obvious transport plan is $f(x_k)=y_k$. For $k\notin \{i,j\}$ this map satisfies the martingale transport condition because $y_i=x_i$. But $y_i<x_i$ and $y_j>x_j$. Hence, we have to modify the transport plan $\pi_f=(\mathrm{id}, f)_\# (\sum_{k=1}^n \delta_{x_k})$ to make it a martingale transport plan. This is simple: we map $\delta_{x_i}+\delta_{x_j}$ to $\delta_{y_i}+\delta_{y_j}$ in a martingale manner. For this purpose we simply replace $\pi_0=(\mathrm{id}, T)_\#(\delta_{x_i}+\delta_{x_j})=(\delta_{(x_i,y_i)}+\delta_{(x_j,y_j)})$ by 
\[\pi_1=\frac{t+s}{t+2s}\delta_{(x_i,x_i-s)}+\frac{s}{t+2s}\delta_{(x_i,x_j+s)}+\frac{s}{t+2s}\delta_{(x_j,x_i-s)}+\frac{t+s}{t+2s}\delta_{(x_j,x_j+s)}\]
where $s=x_i-y_i=y_j-x_j$ and $t=x_j-x_i$. The new transport plan $\pi:=(\pi_f-\pi_0)+\pi_1$ is a martingale transport plan that maps $\sum_{k=1}^n \delta_{x_k}$ to $\sum_{k=1}^n \delta_{y_k}$. Its construction is directly inspired by the (inverse of) the $T$-transforms. Therefore it is possible to compose this martingale transport to make it a martingale (process) that (martingale) enables transporting $\mathbf{x}$ to $\mathbf{y}$ as soon as $\mathbf{x}\preceq \mathbf{y}$.
\end{remark}

%-------------------------------------------------------------------------------------------------
\section{Nontransitivity in the football model}
\label{sc5}

In this section, we consider (possibly) nontransitive versions of the football model. We seek for a characterization of all triples $(p_{12},p_{23},p_{31})$ attached to some $(\mu_1,\ldots,\mu_n)\in \Theta_n$ as in \eqref{eq:pijfootball},
and obtain it for every $n\geq 6$. Namely our results, Proposition \ref{pro:nonfootball} and Theorem \ref{theorem:six}, explain that the described set in $\R^3$ is exactly the same as for three independent random variables that (pairwies) coincide with zero probability.
The idea comes from the work of \cite{ST59, Tr60},
in which they characterize the triples $(\xi,\eta,\zeta)\in [0,1]^2$
that correspond to the probabilities $\xi=\P(X<Y)$, $\eta=\P(Y<Z)$ and $\zeta=\P(Z<X)$ for three independent
random variables $X,Y,Z$.
However, their result was only established under the assumption that $\P(X =Y) = \P(Y=Z) =\P(Z=X) = 0$,
which does not necessarily hold for the football model.
Here we adapt Steinhaus and Trybu\l{}a's result to the football model, where ties are allowed. 

\begin{theorem}[\cite{ST59, Tr60}]\label{them:Trybula}
Let
\begin{equation*}
A:=\left\{\left(\P(X<Y), \P(Y<Z), \P(Z<X)\right)\in [0,1]^3: \begin{aligned}
&(X,Y,Z)\text{ are independent and}\\
&\P(X=Y)=\P(Y=Z)=\P(Z=X)=0
\end{aligned}
\right\},
\end{equation*}
and $\alpha: [0,1]^2 \to \mathbb{R}_+$ be defined by
\begin{align}
\alpha(\xi,\eta)=\begin{cases}
\max\left(\frac{1-\xi}{\eta},\frac{1-\eta}{\xi},1-\xi\eta\right)&\text{for }\xi+\eta>1,\\
1&\text{for }\xi+\eta\leq 1.
\end{cases}
\end{align}
Then $A=D$, where 
\begin{equation}
D=\left\{(\xi,\eta,\zeta)\in [0,1]^3: 1-\alpha(1-\xi,1-\eta)\leq \zeta\leq \alpha(\xi,\eta)\right\}.
\end{equation}
\end{theorem}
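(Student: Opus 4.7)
The plan is to prove the two inclusions $A \subseteq D$ and $D \subseteq A$ separately. For the direction $A \subseteq D$, I would first exploit the reversal symmetry $(X, Y, Z) \mapsto (Z, Y, X)$, which (by the no-tie assumption) maps $(\xi, \eta, \zeta)$ to $(1-\eta, 1-\xi, 1-\zeta)$. Since $\alpha$ is manifestly symmetric in its two arguments, the lower bound $\zeta \ge 1 - \alpha(1-\xi, 1-\eta)$ follows from the upper bound applied to the reversed triple, so it suffices to prove $\zeta \le \alpha(\xi, \eta)$. The case $\xi + \eta \le 1$ is trivial since then $\alpha = 1$; the substantive case is $\xi + \eta > 1$, where I must show that at least one of $\zeta\eta \le 1-\xi$, $\zeta\xi \le 1-\eta$, or $\zeta \le 1 - \xi\eta$ must hold.

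The main analytic tool is a Chebyshev/FKG bound obtained by conditioning on the middle variable: since $F_X$ is nondecreasing and $1-F_Z$ is nonincreasing,
\[ \P(X<Y<Z) \;=\; \int_\R F_X(y)\bigl(1-F_Z(y)\bigr)dF_Y(y) \;\le\; \xi\eta, \]
together with the cyclic analogues $\P(Y<Z<X)\le\eta\zeta$ and $\P(Z<X<Y)\le\xi\zeta$. Enumerating the six strict orderings of $\{X,Y,Z\}$ and summing with multiplicities yields the identity
\[ \P(X<Y<Z) + \P(Y<Z<X) + \P(Z<X<Y) \;=\; \xi + \eta + \zeta - 1, \]
which, combined with the inclusions $\{X<Y<Z\}\subseteq\{X<Z\}$ (and cyclic variants), carves out the admissible region. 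A careful case split on which of $F_X$ and $F_Z$ dominates the other $dF_Y$-almost everywhere produces the three bounds appearing in $\alpha$; the trichotomy reflects three qualitatively distinct extremal arrangements of the three cdfs.

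For $D \subseteq A$, given $(\xi, \eta, \zeta) \in D$, I would construct independent $X, Y, Z$ on interlaced finite supports to avoid ties by design. A natural ansatz is $X \in \{a_1, a_2\}$, $Y \in \{b_1, b_2\}$, $Z \in \{c_1, c_2\}$ with $a_1 < b_1 < c_1 < a_2 < b_2 < c_2$, giving an algebraic system in three Bernoulli parameters that covers a subregion of $D$. The three boundary pieces of $D$ corresponding to the three terms of $\alpha$ are then realized by tailored extremal distributions --- for example, $\zeta = 1 - \xi\eta$ is attained when $Y$ is deterministic, so that $\{X<Y\}$ and $\{Y<Z\}$ become independent Bernoulli events --- and a continuity plus convex combination argument fills the interior of $D$.

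The main obstacle will be the upper bound in Direction 1. The $\max$ of three expressions in $\alpha$ signals a genuine trichotomy rather than a single clean inequality: one has to identify the correct regime of $(\xi, \eta)$ and apply a tailored argument, as Steinhaus \cite{ST59} and Trybuła \cite{Tr60} did by optimizing over extremal measures in each regime and patching together the resulting bounds. A secondary difficulty is making sure Direction 2's construction is exhaustive, since the boundary of $D$ decomposes into three pieces each requiring its own extremal configuration before the continuity argument can take over.
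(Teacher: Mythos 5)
The paper does not prove this result: it is Theorem~5.1 of Steinhaus--Trybu\l{}a, cited from~\cite{ST59,Tr60} and used as a black box. So there is no internal proof to compare against, and your attempt has to stand on its own. As it stands it is an outline with two genuine gaps, the more serious one in the direction $A\subseteq D$.

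\textbf{Direction $A\subseteq D$.} Your setup is sound: the reversal $(X,Y,Z)\mapsto(Z,Y,X)$ sends $(\xi,\eta,\zeta)$ to $(1-\eta,1-\xi,1-\zeta)$, $\alpha$ is symmetric in its arguments, so the lower bound reduces to the upper bound; the FKG/Chebyshev estimate $\P(X<Y<Z)=\int F_X(1-F_Z)\,dF_Y\le \xi\eta$ and its cyclic analogues $p_4\le\eta\zeta$, $p_5\le\zeta\xi$ are correct; and the identity $p_1+p_4+p_5=\xi+\eta+\zeta-1$ is a clean computation. But these ingredients do \emph{not} yield the trichotomy. Plugging the three FKG upper bounds on $p_1,p_4,p_5$ into the identity and using $p_i\ge 0$ only produces inequalities of the form $(1-\xi)(1-\eta)\ge 0$, i.e.\ nothing; summing all six FKG bounds over the six orderings reduces to $(1-\xi)(1-\eta)(1-\zeta)+\xi\eta\zeta\ge0$, again trivial. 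The step you describe as ``a careful case split on which of $F_X$ and $F_Z$ dominates the other $dF_Y$-a.e.'' is exactly where the real work would have to happen, and it is not obvious that such a dichotomy between two cdfs (which are generally incomparable) produces the three regimes of $\alpha$; the boundary between the regimes $\{\zeta=\frac{1-\xi}{\eta}\}$, $\{\zeta=\frac{1-\eta}{\xi}\}$ and $\{\zeta=1-\xi\eta\}$ of $\partial D$ depends on $(\xi,\eta)$, not on a pointwise comparison of $F_X$ and $F_Z$. Trybu\l{}a's argument is a genuinely delicate extremal analysis over measures with fixed $(\xi,\eta)$, not a consequence of the correlation bounds plus counting.

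\textbf{Direction $D\subseteq A$.} Your observation that the boundary piece $\zeta=1-\xi\eta$ is realized with deterministic $Y$ is correct (take $Y\equiv c$, $X\in\{c-1,c+2\}$ and $Z\in\{c-2,c+1\}$ with appropriate weights). But the interlaced $a_1<b_1<c_1<a_2<b_2<c_2$ ansatz covers only a proper subregion of $D$ (solving the resulting system forces a discriminant condition on $(\xi,\eta,\zeta)$), and the proposed ``continuity plus convex combination argument'' to fill the interior is not justified: the map from the three marginal laws to the triple $(\xi,\eta,\zeta)$ is \emph{multilinear}, not linear, so a convex combination of two admissible triples is not obviously realized by mixing the corresponding distributions. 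Indeed, $D$ itself is not a convex set, so an argument structured around convexity of the achievable region should make you suspicious. You would need either an explicit parametrized family of constructions that provably sweeps out all of $D$ (as Trybu\l{}a does, piece by piece along $\partial D$ and then filling in), or a topological/degree argument showing the continuous image of your parameter cube equals $D$ --- neither is currently present.
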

More symmetric descriptions of $D$ are given in \cite{Su02,VuHi21}. Steinhaus and Trybu\l{}a observed a nontransitive phenomenon (which they called a ``paradox"): 
$(\xi,\eta,\zeta)\in D$ can have all its three coordinates larger that $1/2$.
For instance, the point $\left(\frac{\sqrt{5}-1}2,\frac{\sqrt{5}-1}2,\frac{\sqrt{5}-1}2\right)$ lies on the boundary of $D$,
with $\frac{\sqrt{5}-1}2\approx 0,618 > \frac{1}{2}$.

To apply Theorem \ref{them:Trybula} to the football model, 
we introduce a set $A'$ that is a priori larger than $A$:
Similar to \eqref{eq:pijfootball} it is the set of triples $(\xi',\eta',\zeta')$ with
\begin{equation}
\label{eq:XYZ}
\xi'=\P(X<Y)+\frac12\P(X=Y), \, \eta'=\P(Y<Z)+\frac12\P(Y=Z), \, \zeta'=\P(Z<X)+\frac12\P(Z=X),
\end{equation}
and $(X,Y,Z)$ independent.
Here we do not assume $\P(X=Y)=\P(Y=Z)=\P(Z=X)=0$, 
but with this additional assumption, 
we have $(\xi',\eta',\zeta')=(\P(X<Y),\P(Y<Z),\P(Y<Z))\in A$.
So $A\subset A'$.

\begin{proposition}
\label{pro:nonfootball}
With the notation above, we have $A'=D$.
\end{proposition}

\begin{proof}
By Theorem \ref{them:Trybula}, we have $D = A$. Since $A\subset A'$.
It remains to show that $A'\subset A=D$.
Let $(\xi',\eta',\zeta') \in A'$,
and $(X,Y,Z)$ be a triple of random variables satisfying \eqref{eq:XYZ}.
Let $U_X$, $U_Y$ and $U_Z$ be three independent random variables uniform on $[-\frac{1}{2}, \frac{1}{2}]$,
and also independent of $(X,Y,Z)$.
Define
\begin{equation*}
X_n=X+\frac{1}{n}U_X,\quad Y_n=Y+\frac{1}{n}U_Y,\quad Z_n=Z+\frac{1}{n}U_Z.
\end{equation*}

First we claim that $\P(X_n=Y_n)=\P(Y_n=Z_n)=\P(Z_n=X_n)=0$. 
To see this, we observe $\{X_n=Y_n\} = \left\{U_Y-U_X=n(X-Y)\right\}$,
where both sides of the equality are independent. 
Since $U_Y-U_X$ is diffuse, we get $\P(U_Y-U_X=z)=0$ for each $z\in \R$.
Integrating in $z$ with respect to the distribution of $n(X-Y)$ yields $\P(X_n=Y_n)=0$.
The other two equalities follow similarly.
Thus, 
$(\P(X_n<Y_n),\P(Y_n<Z_n),\P(Z_n<X_n))\in A=D$ by Theorem \ref{them:Trybula}.

Next we prove the following limit:
\begin{equation}\label{eq:lim}
\P(X_n<Y_n)\longrightarrow_{n\to\infty} \frac{\P(X<Y)+\P(X\leq Y)}{2}=\P(X<Y)+\frac{1}{2}\P(X=Y).
\end{equation}
Note that 
\begin{align*}
\P(X_n<Y_n)=&\P(X=Y) \, \P(X_n<Y_n\, |\, X=Y)\\
&+\P(X_n<Y_n\text{ and }X<Y)+\P(X_n<Y_n\text{ and }X>Y).
\end{align*}
Clearly, $\P(X_n<Y_n\, |\, X=Y)=\P(U_X<U_Y)=1/2$.
Since $\P(\frac{1}{n}|U_X-U_Y|\leq \frac{1}{n})=1$, we also have:
\begin{equation*}
\P\left(X-Y<-\frac{1}{n}\right) \leq \P(X_n<Y_n\text{ and }X<Y)\leq \P(X-Y<0).
\end{equation*}
By the dominated convergence theorem and the squeeze theorem,
we have $\lim_{n\to\infty}\P(X_n<Y_n\text{ and }X<Y)=\P(X<Y)$.
Similarly, $\lim_{n\to\infty} \P(X_n<Y_n\text{ and }X>Y) =0$. 
Similar to \eqref{eq:lim}, $\eta'$ and $\zeta'$ can be obtained as limits in terms of $(X_n,Y_n,Z_n)$.
Finally, since $D$ is closed, we get $(\xi',\eta',\zeta')\in A$ by sending $n \to \infty$.
\end{proof}

Now we see that the football model permits nontransitivity for some parameters in the limits of Proposition \ref{pro:nonfootball}\footnote{Note that a similar result appeared in \cite{Su02}, see Theorem 3 there, which we noticed after the paper were submitted.}.

\begin{theorem}\label{theorem:six}
For every $n\geq 6$ and each $(\xi,\eta,\zeta)\in D$,
there exists $(\mu_1,\ldots,\mu_n) \in \Theta_n$ such that the corresponding generalized tournament matrix
(defined by \eqref{eq:pijfootball}) satisfies $(p_{12},p_{23},p_{31})=(\xi,\eta,\zeta)$.
\end{theorem}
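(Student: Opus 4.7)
The plan is to combine Proposition~\ref{pro:nonsoccer} (which establishes $A'=D$) with an explicit embedding of three measures into $\{0,1,\dots,5\}$ and a routine padding argument.

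First, by Proposition~\ref{pro:nonsoccer} together with the symmetry $D\ni(\xi,\eta,\zeta)\mapsto(1-\xi,1-\eta,1-\zeta)\in D$ (which swaps the roles of $<$ and $>$ in the definition of $A'$), for every $(\xi,\eta,\zeta)\in D$ there exist independent random variables $X_1,X_2,X_3$ whose pairwise soccer probabilities $p_{ij}=\P(X_i>X_j)+\tfrac12\P(X_i=X_j)$ satisfy $(p_{12},p_{23},p_{31})=(\xi,\eta,\zeta)$. The goal is then to replace these by three probability measures $\mu_1,\mu_2,\mu_3$ supported in $\{0,1,\dots,5\}$ that additionally satisfy the pointwise constraint
\begin{equation*}
\mu_1(k)+\mu_2(k)+\mu_3(k)\le 1\qquad \text{for every }k\in\{0,1,\dots,5\}.
\end{equation*}
Once such $\mu_1,\mu_2,\mu_3$ are obtained, the complementary measure $\nu:=\sum_{k=0}^{n-1}\delta_k-\mu_1-\mu_2-\mu_3$ is a nonnegative measure of total mass $n-3\ge 3$ whose atoms have mass at most $1$. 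A simple greedy algorithm (repeatedly peeling off an initial segment of $\nu$ of total mass exactly one) decomposes $\nu=\mu_4+\cdots+\mu_n$ with each $\mu_i\in\mathcal{P}(\{0,\dots,n-1\})$, producing the required $(\mu_1,\dots,\mu_n)\in\Theta_n$.

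The heart of the proof is therefore the discrete embedding inside $\{0,1,\dots,5\}$. I would carry it out via a finite family of two-atom, disjoint-support configurations. A prototype is
\begin{equation*}
\mu_1=p\,\delta_5+(1-p)\,\delta_2,\quad \mu_2=q\,\delta_4+(1-q)\,\delta_0,\quad \mu_3=r\,\delta_3+(1-r)\,\delta_1,
\end{equation*}
for which a direct computation yields $p_{12}=1-q(1-p)$, $p_{23}=q$, $p_{31}=r(1-p)$, parameterising a three-dimensional subregion of $D$; in particular the Steinhaus--Trybu\l{}a boundary point $(\phi,\phi,\phi)$ with $\phi=\tfrac{\sqrt 5-1}{2}$ is attained by $(p,q,r)=(1-\phi,\phi,1)$. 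By permuting the three team roles cyclically and by changing which of the six available positions is paired with which team, one obtains a finite list of such parametric families; the decisive claim is that the union of their images covers all of $D$.

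The main obstacle is exactly this covering claim. To verify it, one would partition $D$ along the piecewise-linear boundary described by the function $\alpha$ in Theorem~\ref{them:Trybula} and, in each region, exhibit an explicit parametric family (from the finite list) whose image contains the target point. Boundary points of $D$ may require configurations in which two of the measures share a common support point (a genuine tie), which remains compatible with the pointwise inequality $\mu_1(k)+\mu_2(k)+\mu_3(k)\le 1$ provided the overlapping masses at that position sum to at most one. The budget of six atoms is what makes this analysis possible, and it also immediately yields the conclusion for every $n\ge 6$ via the padding argument above.
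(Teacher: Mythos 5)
The strategy you outline — construct three measures with total mass at most one at every point of $\{0,\ldots,5\}$, then pad with $\mu_4,\ldots,\mu_n$ — matches the paper's approach in outline, and your greedy decomposition of the leftover measure $\nu$ is a sound way to finish. Your prototype computations ($p_{12}=1-q(1-p)$, $p_{23}=q$, $p_{31}=r(1-p)$) are correct. However, there is a genuine gap exactly where you flag the ``main obstacle'': you never establish the covering claim. In fact, your prototype family alone is far from enough: from $\eta=q$, $\xi=1-q(1-p)$, $\zeta=r(1-p)$ one reads off $\xi+\eta\ge 1$ and $\zeta\le(1-\xi)/\eta$, so this family only reaches a strict sub-region of $D$ (recall $\alpha(\xi,\eta)$ is a maximum of three expressions, and $(1-\xi)/\eta$ need not be the largest). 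You assert that permuting roles and relabelling positions yields a finite list that covers $D$, but you neither produce that list nor prove the coverage; that is precisely the part that requires work. Also, your invocation of Proposition~\ref{pro:nonsoccer} is a detour: it gives abstract existence of $(X_1,X_2,X_3)$ but says nothing about how to realize them on six points, so it does not actually reduce the difficulty.

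For comparison, the paper resolves the covering directly and constructively. After using the symmetries of $D$ to assume $\xi\leq\eta\leq\zeta$ and $\xi\leq 1-\zeta$, it appeals to a lemma of Vu and Hill stating that if $(\xi,\eta,\zeta)$ is cyclic then so is $(\xi/(1-\zeta),\eta,0)$, realizes the latter with three disjointly supported laws on five points via the explicit construction in Trybu\l{}a's paper, then adds a sixth point carrying the Dirac mass $\mu_{X_1}$ (giving the cyclic triple $(0,\eta,1)$), and finally linearly interpolates $X\sim\zeta\mu_{X_1}+(1-\zeta)\mu_{X_0}$ to recover $(\xi,\eta,\zeta)$. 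This produces three measures with pairwise disjoint supports (on $1$, $2$ and $3$ points respectively, totalling $6$), so the nonnegativity of $\nu=\sum_k\delta_k-\mu_1-\mu_2-\mu_3$ is automatic without the pointwise-sum bookkeeping you introduce, and no case analysis over a finite list of configurations is needed. To repair your argument you would either have to carry out the covering verification in full for your finite list, or replace that step with the interpolation argument via the Vu--Hill lemma as in the paper.
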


\begin{proof}
Let $(\xi,\eta,\zeta)\in [0,1]^3$ be a element of $D$ (we call it ``cyclic'' as in \cite{VuHi21}). We look for $S\subset \R$ of cardinal $6$, and three $S$-valued random variables $(X,Y,Z)$ with disjoint values such that $(\P(X<Y),\P(Y<Z),\P(Z<X)) =(\xi,\eta,\zeta)$. 
Permutations of the random variables or replacement by their opposite permits us to see that $D$ is invariant by permutation of the coordinates, and by replacement with the complementary to 1. 
Therefore, we can assume $\xi\leq \eta\leq \zeta$, and also $\xi \leq 1-\zeta$. 
By the alternative characterization in \cite[Lemma 2.4]{VuHi21},  $(\xi,\eta,\zeta)\in D$ implies that $(\xi/(1-\zeta), \eta, 0)$ is also cyclic. 
Thus, a symmetric construction to the one just after (21) in \cite{Tr60} permits to introduce three disjoint laws $\mu_{X_0},\mu_Y,\mu_Z$ on a set of five points with $(\P(X_0<Y),\P(Y<Z),\P(Z<X_0)) =(\xi/(1-\zeta), \eta, 0)$. 
Replacing $\mu_{X_0}$ by a Dirac measure $\mu_{X_1}=\delta_x$ for $x\in \R$ larger than the $5$ previous points,
one creates the cyclic vector $(0,\eta,1)$ attached to $(X_1,Y,Z)$. 
Finally, $(\xi,\eta,\zeta)$ is obtained as an element of $D$ for the triple $(X,Y,Z)$,
where $X\sim \zeta \mu_{X_1}+(1-\zeta)\mu_{X_0}$ (comparing with \cite[p.\ 328]{Tr60}, where the interpolation is not linear). Note that $Z$ is supported on one point, $X$ on two points and $Y$ on three points. An increasing homeomorphism $\varphi$ now maps $S$ on $\{-5,-4,\ldots,0\}$. Let $\mu_1,\mu_2,\mu_3$ be the distributions of $-\varphi(X),-\varphi(Y),-\varphi(Z)$ respectively. Since the supports of these measures are disjoint, 
we can complete $(\mu_1, \mu_2, \mu_3)$ with $(\mu_4,\ldots,\mu_n)$ to obtain an admissible $(\mu_1,\ldots,\mu_n)\in \Theta_n$.
\end{proof}

%-------------------------------------------------------------------------------------------------
\appendix
\section{Partial orders via the generalized tournament matrix}
\label{sc6}

In this part, we provide further thoughts on partial orders induced by the generalized tournament matrix,
scrutinizing various results in \cite{AK22, Joe88}.
As discussed in the introduction, 
there are two ways to compare the teams $\mathcal{T}_n:=(T_i)_{1 \le i \le n}$ based on different levels of information:
\begin{itemize}[itemsep = 3 pt]
\item 
We compare $T_i$ and $T_j$ by looking at $p_{ij}$ from the generalized tournament matrix $P\in \mathcal{G}_n$ or $P\in \mathcal{G}'_n$.
\item
We compare $T_i$ and $T_j$ by looking at $x_i$ and $x_j$ from the score $\mathbf{x} = (x_1, \ldots, x_n)$,
where $x_i:=\sum_{j\neq i}^n p_{ij}$ or $x_i:=\sum_{j=1}^n p_{ij}$.
\end{itemize}
Given $P$ and $\mathbf{x}$,
there are two binary relations naturally associated with $\mathcal{T}_n$:
\begin{enumerate}[itemsep = 3 pt]
\item 
{\em Score-based relation}: $(T_i\ \leq_x\ T_j)$ if and only if $x_i\leq x_j$.
\item
{\em Results-based relation}: $(T_i\ \leq_P\ T_j)$ if and only if $p_{ij}\leq 1/2$ (if $p_{ij}$ exists), or equivalently, $p_{ij}\le p_{ji}$.
\end{enumerate}
We can also define the strict relations:
 $(T_i\ <_x\ T_j)$ if $x_i<x_j$,
 and $(T_i\ <_P\ T_j)$ if $p_{ij}<1/2$. 
Finally, $(T_i\ =_x\ T_j)$ means $(T_i\ \leq_x\ T_j)$ and $(T_i\ \geq_x\ T_j)$, i.e., $x_i=x_j$;
and $(T_i\ =_P\ T_j)$ means $(T_i\ \leq_P\ T_j)$ and $(T_i\ \geq_P\ T_j)$, i.e., $p_{ij}=1/2$.

Now we recall the definition of a partial order $\preceq$ on a set $\mathcal{T}$:
\begin{enumerate}
\item[(a)]
Reflexivity: $T\ \preceq\ T$.
\item[(b)]
Transitivity: $(T\ \preceq\ T')$ and $(T'\ \preceq\ T'')$ implies $(T\ \preceq\ T'')$.
\item[(c)]
Antisymmetry: $(T\ \preceq\ T')$ and $(T'\ \preceq\ T)$ implies $(T=T')$.
\end{enumerate}
Moreover, if all the elements are comparable, i.e., $(T\ \preceq\ T')$ or $(T'\ \preceq\ T)$ for all $T,T'\in \mathcal{T}$,
the partial order is called a {\em total order}.
If all the properties except (c) are satisfied,
the relation is called a \emph{preorder} or a \emph{total preorder}, respectively. 
It is easy to see that $\leq_x$ is a total preorder. 
In the remaining of this section, we focus on the relation $\leq_P$, and the question whether it is a total preorder. 

For a general $P\in \mathcal{G}_n$ or $P\in \mathcal{G}'_n$, 
the relation $\leq_P$ may be far from being a total preorder. 
The definition of $\mathcal{G}_n'$ (by setting $p_{ii} = \frac{1}{2}$)
ensures the reflexivity and totality (which are not satisfied by $\mathcal{G}_n$).
Here our goal is to characterize $\leq_P$ as a total preorder on $\mathcal{G}'_n$,
so it remains to consider whether $\leq_P$ is transitive.
We will review and elaborate some results evoked in the literature,
especially in \cite{AK22, Joe88}, 
and present a new observation on SST in Proposition \ref{pro:equiv_strong}.
It was explained in \cite[p.917]{Joe88} that even when $\leq_P$ is transitive,
this preorder can be different from $\leq_x$.
Nevertheless,
the notion of SST defined in \eqref{eq:sst} and studied in Proposition \ref{pro:increase2sst} and Theorem \ref{them:entropty_sst} allows us to establish
the equivalence between $\leq_x$ and $\leq_P$.
We can reformulate it as follows: for all $1 \le i,j,k \le n$,
\begin{equation*}
(T_i\ \leq_P\ T_j)\text{ and }(T_j\ \leq_P\ T_k) \quad \Longrightarrow \quad (T_i\ \leq_P\ T_k)\text{ and } p_{ik}\leq \min(p_{ij},p_{jk}).
\end{equation*}
Note that the SST is not a property of the binary relation $\leq_P$ alone,
but rather of the generalized tournament matrix $P$.
Moreover, the property of SST already contains the transitivity of $\leq_P$ in its formulation:
if $P\in\mathcal{G}_n'$, the SST implies that $(\mathcal T,\leq_P)$ is a total preorder. 
On the other hand,
the SST can a priori be satisfied by $P\in \mathcal{G}_n$ even though $T_i\leq_P T_i$ does not hold.
It is easy to see that the transitivity of $\leq_P$ for $P\in \mathcal{G}_n$ implies that for $P\in \mathcal{G}'_n$.
The contrary is false because if $p_{ij}=1/2$ for some $i\neq j$,
both relations $(T_i\leq_P T_j)$ and $(T_j\leq_P T_i)$ should enforce $(T_i\leq_P T_i)$ and $(T_j\leq_P T_j)$,
but these relations do not apply for $\mathcal{G}_n$, whose matrices have undetermined diagonal. 
It seems to us that most authors only consider $\leq_P$ for $P \in \mathcal{G}_n$.

In \cite[Theorem 2.1]{Joe88}, Joe stated that $\leq_P$ is equivalent to $\leq_x$ under the SST.
However, the one-line proof is not entirely convincing. 
We restate this result in Corollary \ref{coro:new_joe}. That is a consequence of Proposition \ref{pro:equiv_strong} where the SST is compared with the almost equivalent notion of monotonicity.

\begin{definition}[Monotonic matrix $P$]
Let $P = (p_{ij})_{1\le i\neq j\le n} \in \mathcal{G}_n$, or $P = (p_{ij})_{1\leq i,j\leq n} \in \mathcal{G}'_n$. 
The matrix $P$ is called  \emph{monotonic} if $P$ is decreasing along rows, i.e., if for every $1 \le i,j,k \le n$, inequality
$j < k$ implies $p_{ij}\geq p_{ik}$, provided $p_{ij}$ and $p_{ik}$ are defined.

Equivalently, the matrix $P$ is monotonic if $P$ is increasing down columns, i.e., for every $1 \le i,j,k \le n$, inequality
$j<k$ implies  $p_{ji}\leq p_{ki}$, provided $p_{ji}$ and $p_{ki}$ are defined.
\end{definition}

In \cite[p.849]{AK22}, Aldous and Kolesnik recalled that if the SST is satisfied,
the matrix $P$ is monotonic.
They also showed that these two properties are not equivalent, 
though the same terminology is often used interchangeably for the two close concepts.
Their counterexample is based on a matrix $P$ with some entries having value one half.\footnote{Their example is $p_{21}=p_{23}=1/2$ and $p_{13}<1/2$.}
 Example \ref{ex:nonstrong} below provides another counterexample,
where the monotonicity does not imply the SST
for a matrix $P \in \mathcal{G}_n$ with all the entries different from $\frac{1}{2}$. 

The following proposition shows the equivalence between the SST and the monotonicity.

\begin{proposition}\label{pro:equiv_strong}
Let $P=(p_{ij})_{1 \le i\neq  j \le n} \in \mathcal{G}_n$. The following statements are equivalent:
\begin{itemize}[itemsep = 3 pt]
\item[(i)] 
The relation $\leq_P$ satisfies the property of SST \eqref{eq:sst} except that we don't require $p_{ii}=1/2$ and $p_{jj}=1/2$ when $p_{ij}=p_{ji}=1/2$ for some $i\neq j$.
\item[(ii)] 
The relation $\leq_P$ satisfies the property of SST \eqref{eq:sst} for $P$ seen as a matrix in $\mathcal{G}'_n$.
\item[(iii)] 
Both conditions are satisfied:
\begin{itemize}[itemsep = 3 pt]
\item 
There exists a relabelling of $P$ by a permutation matrix $M\in \mathcal{M}_n(\R)$ such that $MPM^T$ is monotonic in $\mathcal{G}'_n$.
\item For every $i,j$, $p_{ij}=\frac{1}{2}$ implies $p_{ik}=p_{jk}$ for every $1 \le k \le n$.
\end{itemize}
\end{itemize}
Moreover, a relabelling in (iii) is admissible if and only if $i<j\Rightarrow (T_i\ \leq_P\ T_j)$.
\end{proposition}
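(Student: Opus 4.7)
The plan is to establish the (easy) equivalence $(\text{i})\Leftrightarrow(\text{ii})$ first, and then the harder equivalence $(\text{ii})\Leftrightarrow(\text{iii})$ together with the admissibility claim. For $(\text{i})\Leftrightarrow(\text{ii})$, viewing $P\in \mathcal{G}_n$ as an element of $\mathcal{G}_n'$ by setting $p_{ii}=1/2$ introduces no genuinely new SST constraint, since the SST instances in which two of the three indices coincide are either tautological (when $i=j$ or $j=k$) or reduce to $p_{ii}\geq 1/2$ when $p_{ij}=p_{ji}=1/2$; this last case is precisely the proviso in (i).

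For $(\text{ii})\Rightarrow(\text{iii})$, I would first verify that $\leq_P$ is a total preorder on $\mathcal{T}_n$: totality follows from $p_{ij}+p_{ji}=1$, and transitivity from applying SST to the triple $(k,j,i)$, which turns $p_{kj},p_{ji}\geq 1/2$ into $p_{ki}\geq 1/2$, i.e.\ $p_{ik}\leq 1/2$. I would then pick any relabelling with $T_1\leq_P\cdots\leq_P T_n$ and check that the resulting matrix $P'$ is monotonic by a case analysis on the position of $i$ relative to $j<k$: in the case $k\leq i$, SST at $(i,k,j)$ yields $p'_{ij}\geq p'_{ik}$; in the intermediate case $j\leq i<k$, the inequality is immediate from $p'_{ij}\geq 1/2\geq p'_{ik}$; in the case $i<j$, SST at $(k,j,i)$ gives $p'_{ki}\geq p'_{ji}$, equivalently $p'_{ij}\geq p'_{ik}$. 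The equality condition in (iii) is also a direct consequence of SST: from $p_{ij}=p_{ji}=1/2$ and $p_{jk}\geq 1/2$, SST at $(i,j,k)$ gives $p_{ik}\geq p_{jk}$, and SST at $(j,i,k)$ (valid once $p_{ik}\geq 1/2$) gives $p_{jk}\geq p_{ik}$; the case $p_{jk}\leq 1/2$ is symmetric, using $p_{kj}\geq 1/2$.

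For $(\text{iii})\Rightarrow(\text{ii})$, assume after relabelling that $P$ is monotonic and satisfies the equality condition. Row-monotonicity combined with $p_{ii}=1/2$ yields the strict implication $p_{ij}>1/2\Rightarrow j<i$, while $p_{ij}=1/2$ may also occur for $i\neq j$. Given $p_{ij},p_{jk}\geq 1/2$, the verification of SST splits into three cases: if both inequalities are strict, then $k<j<i$, so row-monotonicity gives $p_{ik}\geq p_{ij}$ while column-monotonicity gives $p_{ik}\geq p_{jk}$; if $p_{ij}=1/2$, the equality condition delivers $p_{ik}=p_{jk}\geq \max(p_{ij},p_{jk})$; the case $p_{jk}=1/2$ is symmetric. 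The admissibility criterion then follows directly: any monotonic $P'$ satisfies $p'_{ij}\leq p'_{ii}=1/2$ for $i<j$ by row-monotonicity, so the relabelling orders the teams by $\leq_P$; the converse is precisely the construction performed in $(\text{ii})\Rightarrow(\text{iii})$, where the freedom to permute tied teams is absorbed by the equality condition.

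The main obstacle is the $(\text{iii})\Rightarrow(\text{ii})$ direction: weak monotonicity alone is insufficient, because a monotonic matrix may contain off-diagonal ties $p_{ij}=1/2$ with $p_{ik}\neq p_{jk}$ for some $k$; for instance, the matrix in $\mathcal{G}_3'$ with rows $(1/2,2/5,3/10)$, $(3/5,1/2,1/2)$, $(7/10,1/2,1/2)$ is monotonic but fails SST at $(i,j,k)=(2,3,1)$, since $p_{23}=1/2$ and $p_{31}=7/10$ are both $\geq 1/2$ while $p_{21}=3/5<7/10=\max(p_{23},p_{31})$. The equality condition of (iii) is exactly what rules out such boundary pathologies. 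Beyond this conceptual point, the remaining work is careful bookkeeping: tracking when each monotonicity inequality is strict versus weak and when row versus column monotonicity is being invoked, and throughout respecting the distinction between $\mathcal{G}_n$ and $\mathcal{G}_n'$.
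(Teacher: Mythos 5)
Your proof is correct and follows essentially the same route as the paper: reduce (i) to (ii) by noting the $\mathcal{G}'_n$ diagonal adds nothing beyond the stated proviso; for (ii)$\Rightarrow$(iii) relabel by the total preorder $\leq_P$ and verify monotonicity and the equality condition by case analysis on the relative positions of $i,j,k$; for (iii)$\Rightarrow$(ii) split into the strict case (handled by row/column monotonicity) and the boundary case $p_{ij}=1/2$ or $p_{jk}=1/2$ (handled by the equality condition). The counterexample you include to motivate the necessity of the equality condition is a useful addition not in the paper's proof, but the overall argument structure and case decomposition coincide.
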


\begin{proof}
$(i)\Leftrightarrow (ii)$. 
It is straightforward that $(ii)$ implies $(i)$.
Conversely, if $(i)$ is satisfied, extending $P$ to $\mathcal{G}'_n$ permits us to recover the full condition \eqref{eq:sst}.

$(ii)\Rightarrow (iii)$. Assume that $P\in \mathcal{G}'_n$ satisfies the SST.
We relabel the indices in such a way that $T_1\ \leq_P\ T_2\ \leq_P\cdots\leq_P\ T_n$.
(The reason why this is possible is straightforward, see \cite{AK22} for a graph theoretic proof.)
Let $i,j,k$ be three different indices with $j<k$. 
We want to prove $p_{ij}\geq p_{ik}$.
Recall that $p_{jk}\leq1/2$.
If $i<j<k$, then we have $p_{ij}\leq 1/2$, and the SST implies $p_{ik}\leq p_{ij}$. 
The case $j<k<i$ is similar.
Finally, if $j<i<k$, we have $p_{ik}\leq 1/2\leq p_{ij}$.
Now we prove the second part of $(ii)$. 
Assume $p_{ij}=1/2$, and in particular $p_{ji}\leq 1/2$.
If $p_{ik}\leq 1/2$, 
then we have $p_{jk}\leq p_{ik}\leq 1/2$. 
Since $p_{ij}\leq 1/2$,
we obtain $p_{ik}\leq p_{jk}\leq 1/2$. 
Thus, $p_{ik}=p_{jk}$. 
The case $p_{ki}> 1/2$ can be treated similarly.

$(iii)\Rightarrow (ii)$. 
Assume that there exists a relabelling such that $(p_{ij})_{i,j}\in \mathcal{G}'_n$ is monotonic and $p_{ij} = \frac{1}{2}$ implies $p_{ik}=p_{jk}$. The first property implies $p_{ij}\leq p_{ii}=\frac{1}{2}$ for every pair $(i,j)$ such that $i<j$. 
Conversely,  if $p_{ij}<1/2=p_{ii}$, we must have $i<j$. 
Assume now $T_i\ \leq_P\ T_j\ \leq_P\ T_k$.
If these relations are from $p_{ij}<1/2$ and $p_{jk}<1/2$,
we have $i<j<k$, and $p_{ik}\leq \min(p_{ij},p_{jk})$ follows from the monotonicity of $P \in \mathcal{G}'_n$.
Otherwise, $\max(p_{ij},p_{jk})=1/2$, and the second property permits us to prove $p_{ik}= \min(p_{ij},p_{jk})$.

To conclude, we make it clear when a given relabelling is admissible. 
We have already proved in part $(ii)\Rightarrow (iii)$ that if the labelling satisfies $T_1\leq_P\cdots\leq_P T_n$,
the matrix is admissible for $(iii)$. 
Conversely, assume that after relabelling the matrix $P$ satisfies $(iii)$. 
By monotonicity, the entries on the upper right part of $P$ are larger or equal to $1/2$ because it is the value on the diagonal. Therefore, $i<j$ implies $p_{ij}\leq 1/2$, i.e., $T_i\leq_P T_j$. \qedhere

\end{proof}

\begin{corollary}\label{coro:new_joe}
Assume that $\leq_P$ satisfies the property of SST for $P\in \mathcal{G}'_n$. 
Then for all $i,  j\le n$,
\[(T_i\leq_P T_j) \Longleftrightarrow (T_i\leq_x T_j).\] 
\end{corollary}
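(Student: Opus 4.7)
The plan is to leverage Proposition~\ref{pro:equiv_strong} to reduce to a relabelled monotonic matrix, and then read off the equivalence by comparing rows and column sums. Concretely, I would first invoke the last sentence of Proposition~\ref{pro:equiv_strong} (together with the implication $(ii)\Rightarrow(iii)$) to pick a permutation so that after relabelling $T_1 \leq_P T_2 \leq_P \cdots \leq_P T_n$, the matrix $P \in \mathcal{G}'_n$ is monotonic, and moreover $p_{ij} = 1/2$ with $i\neq j$ forces $p_{ik} = p_{jk}$ for every $k$. This reduction is crucial because once the labels are in this order, monotonicity becomes a coordinatewise inequality that directly controls the row sums $x_i = \sum_{k=1}^n p_{ik}$.

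For the forward implication $(T_i \leq_P T_j)\Rightarrow(T_i \leq_x T_j)$, I would split on the position of $i$ and $j$ in the new labelling. If $i < j$, monotonicity along columns gives $p_{ik} \leq p_{jk}$ for every $k$, and summing yields $x_i \leq x_j$. If $i = j$, there is nothing to show. If $i > j$, then $T_j \leq_P T_i$ holds by construction, so combined with the assumed $T_i \leq_P T_j$ we must have $p_{ij} = 1/2$; the second clause of~(iii) in Proposition~\ref{pro:equiv_strong} then gives $p_{ik} = p_{jk}$ for every $k$, hence $x_i = x_j$.

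For the reverse implication $(T_i \leq_x T_j)\Rightarrow(T_i \leq_P T_j)$, I would use totality of $\leq_P$ (which holds since $P \in \mathcal{G}'_n$ and the SST makes it a total preorder). If $T_i \leq_P T_j$ is not already given, then $T_j \leq_P T_i$ strictly, i.e., $p_{ji} < 1/2$ in the relabelled matrix, which forces $j < i$. Monotonicity again gives $p_{jk} \leq p_{ik}$ for every $k$, so $x_j \leq x_i$. Combined with the hypothesis $x_i \leq x_j$, equality $x_i = x_j$ follows; then $\sum_k(p_{ik} - p_{jk}) = 0$ with each summand nonnegative forces $p_{ij} = p_{jj} = 1/2$, contradicting $p_{ji} < 1/2$. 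Hence $T_i \leq_P T_j$.

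The only delicate point I foresee is the correct handling of ties: one must rule out the scenario $x_i \leq x_j$ with $p_{ij} > 1/2$, and the argument above works because the equality case in the column-wise monotonicity inequality collapses to $p_{ij} = 1/2$. All other steps are routine once the relabelling from Proposition~\ref{pro:equiv_strong} is in hand, so the main leverage really is packaging that proposition's second clause (rather than proving anything new about the soccer model).
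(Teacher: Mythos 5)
Your proof is correct and takes essentially the same route as the paper: both rely on the relabelling from Proposition~\ref{pro:equiv_strong}~(iii), exploit column-wise monotonicity to compare row sums, and invoke the clause $p_{ij}=\tfrac12 \Rightarrow p_{ik}=p_{jk}$ to handle the tie case. The only difference is organizational: the paper first reduces to proving $(T_i <_P T_j)\Rightarrow (T_i <_x T_j)$ and $(T_i =_P T_j)\Rightarrow(T_i =_x T_j)$ and then lets totality of both preorders do the rest, whereas you case-split directly on the relabelled indices and close the reverse implication by contradiction; these are equivalent packagings of the same argument.
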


\begin{proof}
With the SST both $\leq_P$ and $\leq_x$ are total preorder on the set of teams. 
Therefore, it suffices to prove for every $i,j$,
the relation $T_i<_P\ T_j$ implies $T_i<_x T_j$,
and $T_i=_P T_j$ implies $T_i=_x T_j$. 
In fact, $T_i=_P T_j$ means that $p_{ij}=1/2$. 
By the SST, it implies $p_{ik}=p_{jk}$ for every $k\leq n$. 
Hence, $T_i=_x T_j$. 
Now assume $T_i<_P T_j$. 
Up to a proper relabelling corresponding to $(iii)$ in Proposition \ref{pro:equiv_strong},
it implies $i<j$. 
Thus, the elements of the $i^{\text{th}}$ row are entrywise smaller or equal to those of the $j^{\text{th}}$ row. 
But they are not all equal because $p_{ij}<1/2=p_{jj}$,
Thus, we get $T_i<_x T_j$.\qedhere
\end{proof}

The following example shows that if $P = (p_{ij})_{1\le i\neq j\le n} \in \mathcal{G}_n$ is only monotonic, 
the relation $\leq_P$ may fail to satisfy the SST, and be equivalent to $\leq_x$.

\begin{example}\label{ex:nonstrong}
Take $p_{12}=0.3,\  p_{13}=0,\ p_{23}=0.6$, and symmetrically $p_{21}=0.7,\  p_{31}=1,\ p_{32}=0.4$. 
This matrix is monotonic in $\mathcal{G}_3$ (but not in $\mathcal{G}'_3$). 
The coefficients $p_{23},\ p_{31},\ p_{21}$ are larger than $1/2$, but $p_{23}=0.6\geq \max(p_{21},p_{13})$ is not satisfied. 
So $\leq_P$ does not satisfy the property of SST.
Furthermore, $\leq_P$ is transitive with $T_1\ \leq_P\ T_3\ \leq_P\ T_2$, which is different from $T_1\ \leq_x\ T_2\ \leq_x\ T_3$. Note also that there is no relabelling that makes $P$ monotonic in $\mathcal{G}'_3$.
\end{example}

The following example show that even if $P=(p_{ij})_{i\neq j}\in \mathcal{G}_n$ is both strongly transitive and monotonic,
the order of the indices can be different from $\leq_P$ and $\leq_x$.

\begin{example}
For $n\in 2\N_{>0}$, take $p_{2k,2k-1}=0.4$, $p_{2k-1,2k}=0.6$ and the other entries $0$ or $1$ with $p_{ij}=0$ if $i<j$. The order $\leq_P$ ranks the teams as follows:
\[(T_2\ \leq_P\ T_1)\ \leq_P\ (T_3\ \leq_P\ T_4)\ \leq_P\cdots \leq_P\ (T_{2k}\leq_P\ T_{2k-1})\leq_P\cdots \leq_P\ (T_{n}\leq_P\ T_{n-1}).\]
 Still $(p_{ij})_{i\neq j}\in \mathcal{G}_n$ is monotonic.
\end{example}

{\bf Acknowledgments.}
We thank two anonymous referees for their careful reading and constructive suggestions which improved the presentation of this article.
We thank David Aldous, Brett Kolesnik and Erhan Bayraktar for helpful discussions.
Juillet thanks Tom Muller and Franck-kabrel Ngoupe for numerical experiments.

\medskip
{\bf Funding.}
%\begin{minipage}{2.2cm}
%\includegraphics[width=2.2cm]{by.png}
%\end{minipage}
%\hfill
%\begin{minipage}{13.3cm}
%\end{minipage}
This research was funded, in whole or in part, by the Agence Nationale de la Recherche (ANR), Grant ANR-23-CE40-0017. A CC-BY public copyright license has been applied by the authors to the present document and will be applied to all subsequent versions up to the Author Accepted Manuscript arising from this submission, in accordance with the grant’s open access conditions. Guo is supported by Cai Yuanpei Découverte 2024 and Bourse \emph{``Systemic Robustness and Systemic Failure''} provided by the Institut Europlace de Finance.  Tang acknowledges financial support by NSF grant DMS-2206038, the Columbia Innovation Hub grant,
and the Tang Family Assistant Professorship.

\bibliographystyle{abbrv}
\bibliography{unique}

\begin{thebibliography}{10}

\bibitem{acciaio2025calibration}
B.~Acciaio, A.~Marini, and G.~Pammer.
\newblock Calibration of the bass local volatility model.
\newblock {\em SIAM Journal on Financial Mathematics}, 16(3):803--833, 2025.

\bibitem{YF17}
I.~Adler, Y.~Cao, R.~Karp, E.~A. Pek{\"o}z, and S.~M. Ross.
\newblock Random knockout tournaments.
\newblock {\em Oper. Res.}, 65(6):1589--1596, 2017.

\bibitem{Aldous21}
D.~J. Aldous.
\newblock A prediction tournament paradox.
\newblock {\em Amer. Statist.}, 75(3):243--248, 2021.

\bibitem{AK22}
D.~J. Aldous and B.~Kolesnik.
\newblock To stay discovered: on tournament mean score sequences and the
  {B}radley-{T}erry model.
\newblock {\em Stochastic Process. Appl.}, 150:844--852, 2022.

\bibitem{Av98}
M.~Avellaneda.
\newblock Minimum entropy calibration of asset pricing models, internat.
\newblock {\em J. Theoret. Appl. Finance}, 1:447472, 1998.

\bibitem{AH81}
R.~Axelrod and W.~D. Hamilton.
\newblock The evolution of cooperation.
\newblock {\em Science}, 211(4489):1390--1396, 1981.

\bibitem{backhoff2017martingale}
J.~Backhoff, M.~Beiglb{\"o}ck, M.~Huesmann, and S.~K{\"a}llblad.
\newblock Martingale benamou--brenier: a probabilistic perspective.
\newblock {\em Ann. Probab.}, 48(5):2258--2289, 2020.

\bibitem{BDK24}
M.~Bassan, S.~Donderwinkel, and B.~Kolesnik.
\newblock Tournament score sequences, {E}rd\"{o}s-{G}inzburg-{Z}iv numbers, and
  the {L}\'evy-{K}hintchine method.
\newblock 2024.
\newblock arXiv:2407.01441.

\bibitem{BaLe00}
H.~H. Bauschke and A.~S. Lewis.
\newblock Dykstra's algorithm with {B}regman projections: a convergence proof.
\newblock {\em Optimization}, 48(4):409--427, 2000.

\bibitem{BDN23}
E.~Bayraktar, S.~Deng, and D.~Norgilas.
\newblock A potential-based construction of the increasing supermartingale
  coupling.
\newblock {\em Ann. Appl. Probab.}, 33(5):3803--3834, 2023.

\bibitem{BeHuSt16}
M.~Beiglb\"ock, M.~Huesmann, and F.~Stebegg.
\newblock Root to {K}ellerer.
\newblock In {\em S\'eminaire de {P}robabilit\'es {XLVIII}}, volume 2168 of
  {\em Lecture Notes in Math.}, pages 1--12. Springer, Cham, 2016.

\bibitem{BeJu16}
M.~Beiglb\"ock and N.~Juillet.
\newblock On a problem of optimal transport under marginal martingale
  constraints.
\newblock {\em Ann. Probab.}, 44(1):42--106, 2016.

\bibitem{BeJu21}
M.~Beiglb\"ock and N.~Juillet.
\newblock Shadow couplings.
\newblock {\em Trans. Amer. Math. Soc.}, 374(7):4973--5002, 2021.

\bibitem{benamou2015iterative}
J.-D. Benamou, G.~Carlier, M.~Cuturi, L.~Nenna, and G.~Peyr\'e.
\newblock Iterative {B}regman projections for regularized transportation
  problems.
\newblock {\em SIAM J. Sci. Comput.}, 37(2):A1111--A1138, 2015.

\bibitem{BV04}
S.~P. Boyd and L.~Vandenberghe.
\newblock {\em Convex optimization}.
\newblock Cambridge University Press, 2004.

\bibitem{Brad76}
R.~A. Bradley.
\newblock Science, statistics, and paired comparisons.
\newblock {\em Biometrics}, 32(2):213--232, 1976.

\bibitem{BT52}
R.~A. Bradley and M.~E. Terry.
\newblock Rank analysis of incomplete block designs: I. the method of paired
  comparisons.
\newblock {\em Biometrika}, 39(3/4):324--345, 1952.

\bibitem{Bru84}
R.~A. Brualdi.
\newblock The doubly stochastic matrices of a vector majorization.
\newblock {\em Linear Algebra Appl.}, 61:141--154, 1984.

\bibitem{BrHwPy97}
R.~A. Brualdi, S.-G. Hwang, and S.-S. Pyo.
\newblock Vector majorization via positive definite matrices.
\newblock {\em Linear Algebra Appl.}, 257:105--120, 1997.

\bibitem{CFM}
P.~Cartier, J.~M.~G. Fell, and P.-A. Meyer.
\newblock Comparaison des mesures port\'ees par un ensemble convexe compact.
\newblock {\em Bull. Soc. Math. France}, 92:435--445, 1964.

\bibitem{ChWo92}
K.-M. Chao and C.~S. Wong.
\newblock Applications of {{\(M\)}}-matrices to majorization.
\newblock {\em Linear Algebra Appl.}, 169:31--40, 1992.

\bibitem{Chen24}
H.~Chen, H.~Zhao, H.~Lam, D.~Yao, and W.~Tang.
\newblock Mallows{P}{O}: Fine-tune your {L}{L}{M} with preference dispersions.
\newblock In {\em ICLR}, volume~13, 2025.

\bibitem{CDF23}
A.~Claesson, M.~Dukes, A.~F. Frankl\'in, and S.~u.~O. Stef\'ansson.
\newblock Counting tournament score sequences.
\newblock {\em Proc. Amer. Math. Soc.}, 151(9):3691--3704, 2023.

\bibitem{conze2021bass}
A.~Conze and P.~Henry-Labordere.
\newblock Bass construction with multi-marginals: Lightspeed computation in a
  new local volatility model.
\newblock {\em Available at SSRN 3853085}, 2021.

\bibitem{Dv59}
H.~A. David.
\newblock Tournaments and paired comparisons.
\newblock {\em Biometrika}, 46(1-2):139--149, 1959.

\bibitem{MH99}
H.~De~March and P.~Henry-Labordere.
\newblock Building arbitrage-free implied volatility: Sinkhorn's algorithm and
  variants.
\newblock {\em arXiv preprint arXiv:1902.04456}, 2019.

\bibitem{DK24}
S.~Donderwinkel and B.~Kolesnik.
\newblock Tournaments and random walks.
\newblock 2024.
\newblock arXiv:2403.12940.

\bibitem{Ford57}
L.~R. Ford, Jr.
\newblock Solution of a ranking problem from binary comparisons.
\newblock {\em Amer. Math. Monthly}, 64(8):28--33, 1957.

\bibitem{GS71}
R.~L. Graham and J.~H. Spencer.
\newblock A constructive solution to a tournament problem.
\newblock {\em Canad. Math. Bull.}, 14:45--48, 1971.

\bibitem{HM66}
F.~Harary and L.~Moser.
\newblock The theory of round robin tournaments.
\newblock {\em Amer. Math. Monthly}, 73:231--246, 1966.

\bibitem{HLP}
G.~H. Hardy, J.~E. Littlewood, and G.~P\'olya.
\newblock {\em Inequalities}.
\newblock Cambridge, at the University Press,, 1952.
\newblock 2d ed.

\bibitem{HP11}
F.~Hirsch, C.~Profeta, B.~Roynette, and M.~Yor.
\newblock {\em Peacocks and associated martingales, with explicit
  constructions}, volume~3 of {\em Bocconi \& Springer Series}.
\newblock Springer, Milan; Bocconi University Press, Milan, 2011.

\bibitem{IsIyMcK00}
M.~Isaev, T.~Iyer, and B.~D. McKay.
\newblock Asymptotic enumeration of orientations of a graph as a function of
  the out-degree sequence.
\newblock {\em Electron. J. Comb.}, 27(1):research paper p1.26, 30, 2020.

\bibitem{Joe88}
H.~Joe.
\newblock Majorization, entropy and paired comparisons.
\newblock {\em Ann. Statist.}, 16(2):915--925, 1988.

\bibitem{Ju_seminaire}
N.~Juillet.
\newblock Peacocks parametrised by a partially ordered set.
\newblock In {\em S\'eminaire de {P}robabilit\'es {XLVIII}}, volume 2168 of
  {\em Lecture Notes in Math.}, pages 13--32. Springer, Cham, 2016.

\bibitem{Ju16}
N.~Juillet.
\newblock Stability of the shadow projection and the left-curtain coupling.
\newblock {\em Ann. Inst. Henri Poincar\'e{} Probab. Stat.}, 52(4):1823--1843,
  2016.

\bibitem{Ke73}
H.~G. Kellerer.
\newblock Integraldarstellung von {D}ilationen.
\newblock In {\em Transactions of the {S}ixth {P}rague {C}onference on
  {I}nformation {T}heory, {S}tatistical {D}ecision {F}unctions, {R}andom
  {P}rocesses ({T}ech. {U}niv. {P}rague, {P}rague, 1971; dedicated to the
  memory of {A}nton\'in \v Spa\v cek)}, pages 341--374. Academia [Publishing
  House of the Czechoslovak Academy of Sciences], Prague, 1973.

\bibitem{Kendall55}
M.~G. Kendall.
\newblock Further contributions to the theory of paired comparisons.
\newblock {\em Biometrics}, 11(1):43--62, 1955.

\bibitem{kolesnik2023asymptotic}
B.~Kolesnik.
\newblock The asymptotic number of score sequences.
\newblock {\em Combinatorica}, 43(4):827--844, 2023.

\bibitem{kolesnik2023coxeter}
B.~Kolesnik and M.~Sanchez.
\newblock Coxeter tournaments.
\newblock {\em arXiv preprint arXiv:2302.14002}, 2023.

\bibitem{Landau53}
H.~G. Landau.
\newblock On dominance relations and the structure of animal societies. {III}.
  {T}he condition for a score structure.
\newblock {\em Bull. Math. Biophys.}, 15:143--148, 1953.

\bibitem{La97}
J.-F. Laslier.
\newblock {\em Tournament solutions and majority voting}, volume~7.
\newblock Springer, 1997.

\bibitem{LR81}
E.~P. Lazear and S.~Rosen.
\newblock Rank-order tournaments as optimum labor contracts.
\newblock {\em J. Political Econ.}, 89(5):841--864, 1981.

\bibitem{Low}
G.~Lowther.
\newblock Limits of one-dimensional diffusions.
\newblock {\em Ann. Probab.}, 37(1):78--106, 2009.

\bibitem{Luce}
R.~D. Luce.
\newblock {\em Individual Choice Behavior a Theoretical Analysis}.
\newblock John Wiley \& Sons Inc., 1959.

\bibitem{Mac20}
P.~A. MacMahon.
\newblock An american tournament treated by the calculus of symmetric
  functions.
\newblock {\em Quart. J. Math}, 49:1--36, 1920.

\bibitem{Mallows57}
C.~L. Mallows.
\newblock Non-null ranking models. {I}.
\newblock {\em Biometrika}, 44(1/2):114--130, 1957.

\bibitem{MOA}
A.~W. Marshall, I.~Olkin, and B.~C. Arnold.
\newblock {\em Inequalities: theory of majorization and its applications}.
\newblock Springer Series in Statistics. Springer, New York, second edition,
  2011.

\bibitem{MN83}
P.~McCullagh and J.~A. Nelder.
\newblock {\em Generalized linear models}.
\newblock Monographs on Statistics and Applied Probability. Chapman \& Hall,
  London, 1983.

\bibitem{Moon63}
J.~W. Moon.
\newblock An extension of {L}andau's theorem on tournaments.
\newblock {\em Pacific J. Math.}, 13:1343--1345, 1963.

\bibitem{Moon68}
J.~W. Moon.
\newblock {\em Topics on tournaments}.
\newblock Holt, Rinehart and Winston, New York-Montreal, Que.-London, 1968.

\bibitem{MoPu70}
J.~W. Moon and N.~J. Pullman.
\newblock On generalized tournament matrices.
\newblock {\em SIAM Rev.}, 12:384--399, 1970.

\bibitem{Mos51}
F.~Mosteller.
\newblock Remarks on the method of paired comparisons: I{I}{I}. a test of
  significance for paired comparisons when equal standard deviations and equal
  correlations are assumed.
\newblock {\em Psychometrika}, 16(2):207--218, 1951.

\bibitem{MR01}
A.~M\"uller and L.~R\"uschendorf.
\newblock On the optimal stopping values induced by general dependence
  structures.
\newblock {\em J. Appl. Probab.}, 38(3):672--684, 2001.

\bibitem{MuSt02}
A.~M\"uller and D.~Stoyan.
\newblock {\em Comparison methods for stochastic models and risks}.
\newblock Wiley Series in Probability and Statistics. John Wiley \& Sons, Ltd.,
  Chichester, 2002.

\bibitem{PC19}
G.~Peyr{\'e} and M.~Cuturi.
\newblock Computational optimal transport: With applications to data science.
\newblock {\em Foundations and Trends{\textregistered} in Machine Learning},
  11(5-6):355--607, 2019.

\bibitem{PP24}
P.~Pinto and N.~Pischke.
\newblock On {D}ykstra’s algorithm with {B}regman projections.
\newblock 2024.
\newblock Available at
  \url{https://www2.mathematik.tu-darmstadt.de/~pinto/articles/B-Dykstra.pdf}.

\bibitem{Plackett}
R.~Plackett.
\newblock The analysis of permutations.
\newblock {\em Appl. Stat.}, pages 193--202, 1975.

\bibitem{DPO}
R.~Rafailov, A.~Sharma, E.~Mitchell, S.~Ermon, C.~D. Manning, and C.~Finn.
\newblock Direct preference optimization: Your language model is secretly a
  reward model.
\newblock In {\em Neurips}, volume~36, 2023.

\bibitem{SS07}
M.~Shaked and J.~G. Shanthikumar.
\newblock {\em Stochastic orders}.
\newblock Springer Series in Statistics. Springer, New York, 2007.

\bibitem{SK67}
R.~Sinkhorn and P.~Knopp.
\newblock Concerning nonnegative matrices and doubly stochastic matrices.
\newblock {\em Pacific J. Math.}, 21:343--348, 1967.

\bibitem{ST59}
H.~Steinhaus and S.~Trybu\l{}a.
\newblock On a paradox in applied probabilities.
\newblock {\em Bull. Acad. Polon. Sci}, 7(67-69):108, 1959.

\bibitem{Stob84}
M.~Stob.
\newblock A supplement to: ``{A} mathematician's guide to popular sports''\
  [{A}mer. {M}ath. {M}onthly {\bf 90}\ (1983), no. 4, 246--266; {MR}0700265
  (85i:05122a)]\ by {T}. {J}ech.
\newblock {\em Amer. Math. Monthly}, 91(5):277--282, 1984.

\bibitem{Sto23}
P.~K. Stockmeyer.
\newblock Counting various classes of tournament score sequences.
\newblock {\em J. Integer Seq.}, 26(5):Art. 23.5.2, 9, 2023.

\bibitem{Str65}
V.~Strassen.
\newblock The existence of probability measures with given marginals.
\newblock {\em Ann. Math. Statist.}, 36:423--439, 1965.

\bibitem{Su02}
R.~Suck.
\newblock Independent random utility representations.
\newblock {\em Math. Soc. Sci.}, 43(3):371--389, 2002.

\bibitem{Laj91}
L.~Tak\'acs.
\newblock A {B}ernoulli excursion and its various applications.
\newblock {\em Adv. in Appl. Probab.}, 23(3):557--585, 1991.

\bibitem{Thur27}
L.~Thurstone.
\newblock A law of comparative judgment.
\newblock {\em Psychological review}, 34(4):273, 1927.

\bibitem{Thur31}
L.~Thurstone.
\newblock Rank order as a psycho-physical method.
\newblock {\em Journal of Experimental Psychology}, 14(3):187, 1931.

\bibitem{Tr60}
S.~Trybu\l{}a.
\newblock On the paradox of three random variables.
\newblock {\em Zastos. Mat.}, 5:321--332, 1960/61.

\bibitem{VuHi21}
P.~Vuksanovic and A.~J. Hildebrand.
\newblock On cyclic and nontransitive probabilities.
\newblock {\em Involve}, 14(2):327--348, 2021.

\bibitem{Zer29}
E.~Zermelo.
\newblock Die {B}erechnung der {T}urnier-{E}rgebnisse als ein {M}aximumproblem
  der {W}ahrscheinlichkeitsrechnung.
\newblock {\em Math. Z.}, 29(1):436--460, 1929.

\end{thebibliography}
\end{document}